\documentclass[10pt]{article}
\usepackage[utf8]{inputenc} 
\usepackage[T1]{fontenc} 
\usepackage{soul}
\usepackage[a4paper]{geometry} 
\usepackage{amsmath, amssymb, mathtools, amsthm, bbm}
\usepackage{mathtools} 
\usepackage{mathrsfs} 
\usepackage{graphicx} 
\usepackage{vwcol} 
\usepackage{mathpazo}
\usepackage[OT1]{eulervm}
\usepackage{xcolor}
\usepackage[backend=biber,maxbibnames = 99, url = false, isbn = false]{biblatex}
\AtEveryBibitem{\clearlist{issn}}
\AtEveryBibitem{\clearfield{pages}}
\AtEveryBibitem{\clearfield{note}}

\usepackage[title]{appendix}
\usepackage{varwidth}
\usepackage{enumitem} 
\usepackage{multicol}
\usepackage[colorlinks=true]{hyperref}
\hypersetup{
    breaklinks=true,
    colorlinks,
    citecolor={[rgb]{1,0,0}},
    filecolor={[rgb]{1,0,0}},
    linkcolor={[rgb]{1,0,0}},
    urlcolor= {[rgb]{1,0,0}}
}

\newcommand{\R}{\mathbb{R}}
\newcommand{\N}{\mathbb{N}}

\newcommand{\sP}{\mathscr{P}}

\newcommand {\e}  {\varepsilon}

\newcommand {\Chi} {{\bf \raise 2pt \hbox{$\chi$}} }

\newcommand {\sgn} { {\rm sgn} }

\newcommand {\caT} { {\mathcal T} }
\newcommand {\caK} { {\mathcal K} }
\newcommand {\bs}{\boldsymbol}
\newcommand {\f}   {\frac}

\newcommand{\co}{\colon}
\newcommand{\diff}{\mathop{}\!\mathrm{d}}

\theoremstyle{plain}
\newtheorem*{thm*}{Theorem}
\newtheorem{thm}{Theorem}[section]

\newtheorem{lemma}[thm]{Lemma}

\newtheorem{proposition}[thm]{Proposition}
\newtheorem{corollary}[thm]{Corollary}
\newtheorem{hypothesis}[thm]{\bf Hypothesis}

\theoremstyle{remark}
\newtheorem{remark}[thm]{\bf Remark}
\newtheorem{definition}[thm]{\bf Definition}

\newcommand{\ie}{\emph{i.e.}\;}

\numberwithin{equation}{section}
\newcommand*\samethanks[1][\value{footnote}]{\footnotemark[#1]}

\title{Existence Theory for a Cross-Diffusion System with Independent Drifts: Mixing Dynamics}

\author{Alp\'ar R. M\'esz\'aros\thanks{Department of Mathematical Sciences, 
Durham University,
Upper Mountjoy Campus,
Durham,
UK,
DH1 3LE. 
Email: alpar.r.meszaros@durham.ac.uk and guy.m.parker@durham.ac.uk} \quad
and \quad 
Guy Parker\samethanks
}

\date{}

\usepackage{xurl}

\begin{document}

\maketitle

\begin{abstract}
\noindent 
We consider a cross-diffusion system for which the diffusion of each species is governed solely by the aggregate density through a pressure law of logarithmic or fast diffusion type.
The model is set over a one dimensional bounded interval, equipped with no-flux boundary conditions, and accommodates for the presence of potential drifts which are allowed to differ across each species. We establish the global existence of solutions without having to assume the total mixing of solutions. As a consequence, we give a full resolution of the PDE systems recently studied by the authors in \cite{meszarosParker} and by Elbar--Santambrogio in \cite{elbarfilippo2025}, by allowing a general class of initial data with finite bounded variation, with no further structural assumptions on their supports.
\end{abstract}
\vskip .7cm
2020 \textit{Mathematics Subject Classification:} 35A01, 35B65, 35K40, 49Q22.
\newline\textit{Keywords and phrases.} cross-diffusion system; no self-diffusion; fast diffusion; logarithmic pressure law.
\section{Introduction}\label{sec:Intro}

In this paper we continue our programme, initiated in \cite{meszarosParker}.
Thus, the goal of this manuscript is to establish a weak existence theory for a general class of two-species cross-diffusion systems, set on a bounded open interval $\Omega \coloneqq (0,L) \subset \R$.
The class of systems under consideration are further equipped with: 
\begin{enumerate}[label=\textopenbullet]
    \item A fixed a time horizon $T> 0$.
    \item Lipschitz continuous potentials $V_{1},V_{2}\co \Omega \to \R $.
    \item A non-linearity $f \in C^2(0,+\infty)$, corresponding to either:
    \begin{enumerate}
        \item A pressure law of fast-diffusion type $\ie f$ behaving like a power law $s^\alpha$ with $\alpha\in (0,1)$;
        \item The logarithmic pressure law $f(s) \coloneqq s\log(s)$. 
    \end{enumerate} 
\end{enumerate}

Subsequently, the model system reads as follows.
\begin{equation} \label{eq:cdid}
\begin{cases}
\begin{alignedat}{2}
  \partial_t \rho_i &= \partial_x(\rho_i\partial_x(f'(\rho_1+\rho_2)+V_i)) &&\text{ in } (0,T)\times \Omega,\\
    \rho_{i,0} &= \rho_i &&\text{ on } \{ 0 \} \times \overline{\Omega}, \\
     0 &= \rho_i\partial_x(f'(\rho_1+\rho_2)+V_i) &&\text{ on } (0,T)\times \partial\Omega,
\end{alignedat}
\end{cases}
 \text{ for } i \in \{1,2\}.
\end{equation}
It has been proven in \cite[Theorem 1.]{BertschPeletier} that, in the absence of convective effects, solutions of System \eqref{eq:cdid} may produce jump discontinuities within a finite time. 
Consequently, the following definition of weak-solution is provided, facilitating a notion of solution to System \eqref{eq:cdid} for which each species density is required only to be integrable.

\begin{definition}\label{mainsolution}
Let $g \in C^1(0,+\infty)$ denote the map
\[
s \mapsto g(s) \coloneqq \int_{1}^s zf''(z) \diff z.
\]
Given initial data $\rho_{1,0},\rho_{2,0}\in \sP^{ac}(\Omega)$, the pair $(\rho_1,\rho_2)$ defines a weak solution to System \eqref{eq:cdid} if, for each $i \in \{1,2\}$: 
    \begin{enumerate}
        \item $\rho_i \in L^\infty([0,T];\sP(\Omega))$,
        \item $\partial_x g(\rho_1 + \rho_2) \in L^1([0,T]\times \Omega)$,
        \item The following equality is satisfied for any $\varphi \in C_c^1([0,T)\times \overline{\Omega})$.
    \end{enumerate}
    \begin{equation}\label{eq:weakcdid}
    \begin{split}
   \int_{\Omega} \rho_{i,0}\varphi_0 \diff x & +  \int_0^T\int_{\Omega}\rho_i\partial_t\varphi \diff x \diff t \\
   & = \int_0^T\int_{\Omega} \rho_i\partial_x(f'(\rho_1+\rho_2)+V_i)\partial_x \varphi\diff x \diff t.
\end{split}
\end{equation}
\end{definition}
Consequently, the aim of this manuscript is to prove the following result, whose precise statement can be found in full detail in Theorem \ref{maintheorem}.
\begin{thm}\label{concisemaintheorem}
Consider System \ref{eq:cdid} equipped with a logarithmic or fast-diffusive type pressure.
Further, suppose that the initial data and drifts of System \ref{eq:cdid} are suitably regular and satisfy the compatibility condition 
\[
\partial_xV_1 = \partial_xV_2 \ \mathrm{ on } \ \partial\Omega.
\]
Then, there exists a weak solution to System \eqref{eq:cdid} in the sense of Definition \ref{eq:weakcdid}.
\end{thm}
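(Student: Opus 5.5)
The plan is to build solutions as limits of a JKO (minimising-movement) scheme in the product Wasserstein space. This is the structure used in \cite{meszarosParker}, but the mixing assumption has to be replaced by a compactness argument that holds for general BV data. The system is the formal $W_2\times W_2$ gradient flow of the energy
\[
\mathcal{E}(\rho_1,\rho_2)=\int_\Omega f(\rho_1+\rho_2)\diff x+\sum_{i=1}^2\int_\Omega V_i\rho_i\diff x .
\]
For a time step $\tau>0$ I would define iterates $(\rho_1^{k+1},\rho_2^{k+1})$ as minimisers of
\[
\mathcal{E}(\rho_1,\rho_2)+\frac{1}{2\tau}\sum_i W_2^2(\rho_i,\rho_i^k),
\]
and take piecewise-constant interpolants $\rho_i^\tau$. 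Existence of minimisers is standard:
\begin{enumerate}[label=\textopenbullet]
\item the internal energy is convex in $\rho=\rho_1+\rho_2$, and hence lower semicontinuous;
\item it is bounded below on the bounded interval (also for $s\log s$ and for concave-type fast-diffusion $f$, up to the sign convention making $f$ convex).
\end{enumerate}
The first variation gives Kantorovich potentials $\varphi_i$ with
\[
\rho_i\Big(\frac{\partial_x\varphi_i}{\tau}+\partial_x\big(f'(\rho)+V_i\big)\Big)=0.
\]
The step that needs care is that $f'(\rho)$ is only meaningful where $\rho>0$. Here I would use that for logarithmic or fast diffusion $f'(0^+)=-\infty$, which forces $\rho^{k+1}>0$ a.e. This yields the optimality condition $\partial_x g(\rho)=-\sum_i\rho_i\partial_x(\varphi_i/\tau+V_i)$ in $L^1$. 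The standard estimate $\sum_k W_2^2/\tau\le C$ then gives a uniform $L^1_tL^1_x$ bound on $\partial_x g(\rho^\tau)$ (via Cauchy--Schwarz with mass one) and $1/2$-Hölder equicontinuity of $\rho_i^\tau$ in $W_2$.

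Compactness of $\rho_i^\tau$ in $C([0,T];\sP)$ then follows from Arzelà--Ascoli. The nonlinear term needs strong compactness of the total density $\rho^\tau$. I would get this from the spatial $BV$-type control on $g(\rho^\tau)$, combined with the time equicontinuity, through an Aubin--Lions type lemma (Rossi--Savaré or Dreher--Jüngel). This gives $\rho^\tau\to\rho$ a.e.; since $g$ is strictly increasing, it also gives $g(\rho^\tau)\to g(\rho)$.

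The main obstacle is passing to the limit in the flux $\rho_i\partial_x f'(\rho)$ for each \emph{individual} species, since we only have weak convergence of $\rho_i^\tau$. My plan is to write
\[
\rho_i\partial_x f'(\rho)=\frac{\rho_i}{\rho}\,\partial_x g(\rho),
\]
with ratio $\rho_i/\rho\in[0,1]$ well-defined because $\rho>0$. The previous route in \cite{meszarosParker} relied on mixing to control this ratio. Instead I would prove propagation of $BV$ bounds: using the flow-interchange / five-gradients technique with an auxiliary entropy functional, I would show that $\|\partial_x\rho_i^{k}\|_{TV}$ (or $TV$ of $\rho^k$ together with that of the ratios $r_i=\rho_i/\rho$) grows at most exponentially in $k\tau$. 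This should be done by differentiating $\int|\partial_x\rho_i|$ along the heat flow on $\Omega$ with Neumann conditions. The compatibility condition $\partial_xV_1=\partial_xV_2$ on $\partial\Omega$ enters here: it kills the boundary terms in the one-dimensional maximum-principle/TV computation at the endpoints. Lipschitz drifts contribute only a Grönwall factor.

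With uniform $BV$ bounds on $\rho_i^\tau$, Helly's theorem plus time equicontinuity give strong $L^1$ convergence of each $\rho_i^\tau$. Hence $r_i^\tau\to r_i$ a.e. on $\{\rho>0\}$, and the products $r_i^\tau\,\partial_xg(\rho^\tau)$ converge weakly in $L^1$. This uses equi-integrability of $\partial_x g(\rho^\tau)$, which I would upgrade from the bound $\|\partial_xg(\rho^\tau)\|_{L^2_t(L^1_x)}$ using the $L^\infty$ bound on $\rho$ from below/above obtained from the BV control in one dimension. Finally I would pass to the limit in the discrete Euler--Lagrange equation, summed in time against $\varphi\in C_c^1$ with the usual $O(\tau)$ Taylor error on $W_2$-optimal maps, to obtain \eqref{eq:weakcdid}.
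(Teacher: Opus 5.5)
\textbf{There is a genuine gap.} It lies in the step you treat as routine: the uniform $BV$ propagation. With species-dependent drifts the natural computation does not close, and none of the tools you name produces the bound. At the continuous level, $r=\rho_1/\sigma$ with $\sigma=\rho_1+\rho_2$ satisfies
\begin{align*}
\partial_t r={}&\big(\partial_xf'(\sigma)+r\partial_xV_2+(1-r)\partial_xV_1\big)\partial_xr\\
&+\big(\partial_{xx}(V_1-V_2)+\partial_x\log(\sigma)\,\partial_x(V_1-V_2)\big)r(1-r).
\end{align*}
In $\frac{\diff}{\diff t}\int_\Omega|\partial_xr|\diff x$ the transport part only gives a boundary term. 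Differentiating the source, however, produces $r(1-r)\,\partial_x(V_1-V_2)\,\partial_{xx}\log(\sigma)$ and $(1-2r)\,\partial_x\log(\sigma)\,\partial_x(V_1-V_2)\,\partial_xr$.
\begin{itemize}
\item To close a Gr\"onwall inequality you would need $\partial_{xx}\log(\sigma)\in L^1_{t,x}$, or $\partial_x\log(\sigma)\in L^1_tL^\infty_x$. The available a priori estimates (energy dissipation, $L^p$-type bounds for the aggregate equation) only control first derivatives, e.g.\ $\partial_x\log(\sigma)$ and $\sqrt{\sigma}\,\partial_xf'(\sigma)$ in $L^2_{t,x}$.
\item The computation also needs $\partial_{xxx}(V_1-V_2)\in L^1$, so Lipschitz drifts do not ``contribute only a Gr\"onwall factor''.
\item The bad term disappears only when $\partial_xV_1=\partial_xV_2$, i.e.\ the common-drift case, as in the drift-free setting of \cite{CFSS}.
\end{itemize}
Working with $\mathrm{TV}(\rho_i)$ is worse. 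Each $\rho_i$ has no self-diffusion and solves a continuity equation with velocity $u_i=\partial_x(f'(\sigma)+V_i)$, so its total variation needs $\partial_xu_i\in L^1_tL^\infty_x$, i.e.\ second derivatives of $f'(\sigma)$. The tools you cite do not supply this either:
\begin{itemize}
\item Flow interchange with the heat flow gives entropy-dissipation information on $\sigma$, not total variation of the species.
\item The five-gradients inequality applied to species $i$ yields $\int\partial_x\rho_i\,H'\big(\partial_x(f'(\sigma)+V_i)\big)$. This has no sign, since $\partial_x\rho_i$ and $\partial_x\sigma$ need not be aligned.
\end{itemize}

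\textbf{The missing idea is the paper's inhomogeneous ratio} \eqref{eq:inhomogeneousratio}: $\phi=\frac{r\omega}{1-r+r\omega}$ with $\omega=\exp\big(-\int_0^x\partial_s(V_1-V_2)\,\xi(\sigma)\diff s\big)$, where $\xi$ solves $\xi'(s)s^2f''(s)-\xi(s)sf''(s)=1$ (a viscous variant $\xi_\eta$ is used for the approximation). Since $\partial_x\phi$ is proportional to $\partial_xr-r(1-r)\xi(\sigma)\partial_x(V_1-V_2)$, part of the bad $\partial_x\log(\sigma)$-term is absorbed into the transport of $\phi$. The ODE for $\xi$ is exactly the condition under which the remainder is cancelled by the $\sigma\xi'(\sigma)\partial_xf'(\sigma)$-contribution of $r(1-r)\partial_t\log(\omega)$. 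After integrating by parts inside $\int_0^x$ one obtains $\partial_t\phi=a\partial_x\phi+m\,\phi(1-\phi)$. Here $\partial_xm$ involves only products of first derivatives, such as $\xi''(\sigma)\sigma f''(\sigma)|\partial_x\sigma|^2\lesssim|\partial_x\log(\sigma)|^2$ and $\sigma\xi'(\sigma)\partial_xf'(\sigma)$, plus terms linear in $\partial_xr$ (Proposition~\ref{sumrequirements}). The remaining ingredients are as follows.
\begin{itemize}
\item The $BV$ norms of $\phi$ and $r$ are comparable because $\log(\omega)$ is bounded, using $|\xi(s)|\lesssim s^{1-\alpha}$ and mass conservation.
\item $\partial_xV_1=\partial_xV_2$ on $\partial\Omega$ gives $\partial_x\phi=0$ there and removes the boundary term at $x=0$ inside $\omega$.
\item The paper runs this on a viscous approximation with smooth compatible data. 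The third-order computation is then classical, and the viscosity is handled by Kato's inequality (Theorem~\ref{fConcludingGronwall}).
\item $BV$ of $r$ together with $\partial_x\sigma\in L^{\theta_\alpha}_{t,x}$ bounds $\rho_i=r\sigma$ in $L^{\theta_\alpha}_tW^{1,1}_x$, which feeds Aubin--Lions.
\end{itemize}
Your JKO scheme contains no discrete analogue of this estimate. It therefore yields no strong compactness of the individual $\rho_i$, and the flux cannot be identified.

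A smaller point: a one-dimensional $BV$ bound gives no lower bound on $\sigma$. None is needed, since $\rho_i\partial_xf'(\sigma)=\sqrt{\sigma}\,\partial_xf'(\sigma)\cdot\rho_i/\sqrt{\sigma}$ with $\rho_i/\sqrt{\sigma}\le\sqrt{\sigma}$.
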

\subsection{Motivation and Background Literature}
Thanks to the works \cite{meszarosParker, elbarfilippo2025}, System \eqref{eq:cdid} has enjoyed a well-posedness theory in the presence of species  independent drifts:
\begin{itemize}
    \item Firstly, in the context of the logarithmic pressure law (cf. \cite{meszarosParker}).
    \item Secondly, for a power-law type fast-diffusion (cf. \cite{elbarfilippo2025}).
\end{itemize}
The methodology for each of these works was to study the dissipation of a first order energy taking the form 
\begin{equation}\label{eq:energyformal}
    t\mapsto \left\|\partial_x \log\left(\frac{\rho_1(t)}{\rho_2(t)}\right) - \partial_x(V_1-V_2)\xi(\sigma(\rho_1(t)+\rho_2(t)))\right\|_{L^1(\Omega)}
\end{equation}
for a suitably defined non-linearity $\xi\co[0,+\infty) \to \R$.

\medskip 

Specifically, a first breakthrough came in \cite{meszarosParker} when Energy \eqref{eq:energyformal} was first introduced. 
In this instance, $\xi$ was taken to be a constant.
Then, a second breakthrough came with the later realisation, in \cite{elbarfilippo2025}, that an existence theory could be provided for a larger class of pressure laws by introducing a non-linearity dependent of the aggregate density.

\medskip 

By studying the dissipation of this energy, each of these works established a novel Gr\"onwall argument with which to establish  $L^\infty([0,T];BV(\Omega))$ estimates for the quantity 
\[
r(t,x) \coloneqq \frac{\rho_1(t,x)}{\rho_1(t,x)+\rho_2(t,x)}.
\]
Indeed, given a suitable viscous approximation of System \eqref{eq:cdid} and equipped with such an estimate, it is then possible to establish sufficient compactness with which to derive a weak existence theory in the sense of Definition \ref{mainsolution}.

\medskip 

On the other hand, to control the dissipation of Energy \eqref{eq:energyformal}, it is necessary to assume that the initial densities $\rho_{1,0},\rho_{2,0}$ satisfy 
\begin{equation}\label{eq:enforced}
    \log\left(\frac{\rho_{1,0}}{\rho_{2,0}}\right) \in BV(\Omega) \subset L^\infty(\Omega).
\end{equation}

The logarithm approaches infinity at both $0$ and $+\infty$ and, consequently, the existence theory established in \cite{meszarosParker, elbarfilippo2025} addresses initial data for which the densities are \emph{totally mixed}. 
That is to say, the two initial profiles must share the same support as one another and, whilst each profile can vanish or blow up, it follows from \eqref{eq:enforced} that the two densities must still vanish or approach points of blow up at the same asymptotic rate as another.

\medskip 

Indeed, by controlling the dissipation of Energy \eqref{eq:energyformal}, it follows that the total mixing property is propagated across time. 
This means that the solutions constructed in \cite{meszarosParker, elbarfilippo2025} can neither fully segregate nor form complete inter-species interfaces with one another. 
To the best of the authors' knowledge, this phenomenon had never previously been established for any cross-diffusion systems associated with independent drifts and is in complete dichotomy with segregated solutions which are investigated in \cite{BertschPeletier,MeszarosKim2018,CFSS,TomPerthame}.

\medskip 

In contrast to the methodology established in \cite{meszarosParker, elbarfilippo2025}, the work \cite{CFSS} established a one-dimensional existence theory for System \eqref{eq:cdid} in the {\it absence} of drift terms but also without the necessity of the total mixing condition. 
In particular, in this work, $L^\infty([0,T];BV(\Omega))$ estimates were established for the quantity $r$ by a direct calculation that the energy 
\begin{equation}\label{eq:energy2}
    t\mapsto \|\partial_x r(t)\|_{L^1(\Omega)}
\end{equation}
was preserved over time. 

\medskip 

By only studying the dissipation of Energy \eqref{eq:energy2}, 
the total mixing condition is not imposed at the initial time and, as such, the existence theory established in \cite{CFSS} facilitates solutions which may mix and segregate over time.

\subsection{Contribution}

The main contribution of this work is therefore to bridge the gap between the existence theories established in \cite{meszarosParker,elbarfilippo2025} and the existence theory of \cite{CFSS}. Since for the analysis in \cite{meszarosParker,elbarfilippo2025} the total mixing of the solutions was indispensable, significant new ideas are needed to go beyond that setting, and we achieve this by introducing a novel \emph{inhomogeneous ratio function}.
This new ratio function facilitates a more general weak existence theory, allowing for:
\begin{enumerate}
    \item Independent drifts;
    \item Partially mixed, totally mixed, or segregated initial data;
    \item The existence of weak solutions, which may segregate or mix as the system evolves.
\end{enumerate}

In addition to such a contribution, the theory of \cite{meszarosParker,elbarfilippo2025} is further extended by:
\begin{itemize}
    \item Providing an existence theory for a large class of pressure laws which exhibit a fast-diffusion structure but do not correspond exactly to a power law;
    \item Exploring the existence problem on a bounded domain equipped with no-flux boundary conditions.
\end{itemize}

\subsection{Method}
  The primary obstruction to a weak existence theory for System \eqref{eq:cdid} is as follows:
  Given any smooth approximation of System \eqref{eq:cdid}, which is denoted here by 
  \[
  (\rho_1^\eta,\rho_2^\eta)_{\eta \in I}, \ I \coloneqq (0,1],
  \]
  one wishes to establish a weak convergence of the product 
  \begin{equation}\label{product}
        \rho_i^\eta\partial_x f'(\rho_1^\eta+\rho_2^\eta).
  \end{equation}

  Upon first inspection, the only a priori estimate available to the system is accessible by exploiting the parabolic structure of the aggregate density, denoted by 
  \[
  \sigma(t,x) \coloneqq \rho_1(t,x) + \rho_2(t,x).
  \]
  Indeed, for a solution of System \eqref{eq:cdid}, the variable $\sigma$ formally satisfies an advection-diffusion equation of porous medium type. 
  This given by
  \begin{equation}\label{eq:nlfp}
    \partial_t\sigma = \partial_{x}(\partial_x g(\sigma)+ \sigma \bs{v}) \quad \quad \bs{v} = \f{\rho_1\partial_xV_1+ \rho_2\partial_xV_2}{\sigma} \in L^\infty_{t,x}. 
    \end{equation}
    Consequently, the standard parabolic theory is able to provide $L^p_{t,x}$ a priori estimates for the family $(\partial_x f'(\sigma^\eta))_{\eta \in I}$ - for some $p \in [1,+\infty)$ - which can be further used to obtain $L^p_{t,x}$ estimates for each density $(\rho_i^\eta)_{\eta \in I}$.
    Such estimates provide a weak $L^p_{t,x}$ compactness to each factor in Product \eqref{product} for which it may be identified that 
    \[
    \rho_i^\eta \rightharpoonup \rho_i \text{ and } \partial_xf'(\rho_1^\eta +\rho_2^\eta) \rightharpoonup  \partial_xf'(\rho_1 +\rho_2). 
    \]
    However, since the product of two weakly convergent factors does not necessarily converge to the weak limit of the two respective factors, it is not clear that the limit of \eqref{product} can be identified with the limiting object
    \[
    \rho_i \partial_xf'(\rho_1+\rho_2).
    \]
    Thus, a strategy for overcoming this obstacle is to obtain strong compactness for one or both of the factors in \eqref{product} allowing for the identification of the weak limit. 
    Indeed, this was the approach taken in \cite{CFSS, meszarosParker, elbarfilippo2025}.

    \medskip 
    
    In each of these works, 
    $L^\infty([0,T];BV(\Omega))$ estimates were first established for the ratio $r^\eta$ by studying the dissipation of the energies \eqref{eq:energyformal} and \eqref{eq:energy2} respectively. 
    Such estimates could then be coupled with an $L^p([0,T];BV(\Omega))$ estimate for the aggregate density, coming from the non-linear parabolic theory, in order to assert that each of the products
    \[
    \rho_1^\eta = r^\eta \sigma^\eta , \quad \rho_2^\eta = (1-r^\eta)\sigma^\eta
    \]
    were bounded in $L^p([0,T];BV(\Omega))$ uniformly in $\eta$.
    Moreover, equipped with such regularity for $\rho_i^\eta$, the application of the Aubin--Lions lemma endows each family of densities with a strong compactness in $L^p_{t,x}$, thus allowing for the identification of \eqref{product} in the weak limit.

    \subsection*{The Inhomogeneous Ratio}

    In this manuscript, a weak existence theory is also provided by establishing an a priori $L^\infty([0,T];BV(\Omega))$ estimate for the ratio $r$. 
    However, to avoid imposing the total mixing condition, and to allow for segregated or partially mixed initial densities, a new energy is introduced.

    \medskip 
    
    In the a-convective regime, that is to say: when 
    \[
    \partial_x V_1 = \partial_xV_2 = 0,
    \]
    the new energy introduced in this manuscript coincides exactly with Energy \ref{eq:energy2} which was studied in \cite{CFSS}.
    However, more generally, in the convective regime, this new energy can be seen as an \emph{inhomogeneous ratio} function. 
    That is, a function $\phi \co [0,T]\times \Omega \to [0,1]$ which describes the ratio between the density $\rho_1$ and the aggregate $\sigma$ in such a way that the ratio is weighted, with respect to the potentials and the aggregate density, depending on the given point in time and space at which the ratio is evaluated. 

    \medskip 

    The construction of such an energy is informed by the fact that, when 
    $j\co \R \to \R$ denotes the function 
    \begin{equation}\label{eq:j}
    j(s) = \frac{\exp(s)}{1+\exp(s)}
    \end{equation}
    the following equality is satisfied.
    \[
    j \circ \log\left(\frac{\rho_1}{\rho_2}\right) = r.
    \]
    In particular, the quantity 
    \[
    \log\left(\frac{\rho_1}{\rho_2}\right)
    \]
    may be viewed as the primitive of the function inside Energy \eqref{eq:energyformal} in the setting when $\partial_xV_1 = \partial_xV_2$.

    \medskip 

    Thus, by considering a general choice of $V_1,V_2$, it is by taking the primitive of the function inside Energy \eqref{eq:energyformal} and composing this function with the choice of $j$, given in Equation \eqref{eq:j}, that an inhomogeneous ratio type function is constructed. 

    \medskip 
    
    Indeed, supposing the existence of a solution of System \eqref{eq:cdid} in the convective regime, such a ratio function formally takes the following form.
    \begin{equation}\label{eq:inhomogeneousratio}
    \begin{split}
    \phi(t,x) & \coloneqq  \f{r(t,x)\omega(t,x)}{1-r(t,x) +r(t,x)\omega(t,x)}, \\
    & \omega(t,x) \coloneqq \exp\left(-\int_0^x \partial_s(V_1(s)- V_2(s)
) \xi(\sigma(t,s)) \diff s\right).
\end{split}
\end{equation}

Given the formal construction of such a ratio function and a smooth approximation of System \eqref{eq:cdid}, the derivation of an $L^\infty([0,T];BV(\Omega))$ estimate for $r$ then follows (at least formally) by establishing that $\phi$ satisfies the following three properties.

\begin{enumerate}[label=A$\arabic*$., ref = A$\arabic*$]
\item 
The evolution equation for $\phi$ is of the form
    \[
    \partial_t\phi = a \partial_x \phi(t,x) + b
    \]
    for suitably defined coefficient functions $a,b\co[0,T]\times \Omega \to \R.$ \label{introA}
\item 
The pair $(b,\phi)$ 
admits the existence of $\mathcal{B} \in L^1([0,T])$ for which the inequality
\begin{align}\label{eq:B}
\|\partial_x b(t)\|_{L^1(\Omega)}\leqslant \mathcal{B}(t)\left(1+ \|\partial_x \phi\|_{L^1(\Omega)}\right)
\end{align}
is satisfied for almost every $t \in [0,T]$. \label{introB}
\item 
The following equivalence holds on $[0,T]$.
\[ 
\quad \quad \phi(t) \in BV(\Omega) \Longleftrightarrow r(t) \in BV(\Omega).
\]
\label{introC}
\end{enumerate}
Indeed, if $\phi$ can be shown to satisfy Properties \ref{introA}, \ref{introB} then the following formal argument can be made for the dissipation of the energy
\[
t \mapsto \|\phi(t)\|_{L^1(\Omega)}.
\]
\begin{align*}
    \partial_t &\int_{\Omega} |\partial_x \phi| \diff x 
    =  \int_{\Omega} \sgn(\partial_x  \phi)\partial_x\partial_t  \phi\diff x \\
    = & \int_{\Omega} \sgn(\partial_x  \phi)\left(\partial_x(a \partial_x \phi) +\partial_xb \right)\diff x
    \\ \leqslant & \int_{\Omega} \partial_x (a |\partial_x \phi|) + |\partial_x b| \diff x 
    \\ \leqslant & \int_{\partial\Omega} a |\partial_x \phi|\cdot \nu \diff x + \mathcal{B}\left(1+\int_{\Omega}|\partial_x\phi| \diff x\right).
\end{align*}
In particular, if $|\partial_x\phi|$ can be shown to vanish on $(0,T)\times \partial\Omega$, then, since $\mathcal{B}\in L^1([0,T])$, in accordance with Property \ref{introB}, $\phi$ satisfies the inequality 
\begin{equation}\label{eq:pregronwall}
      \partial_t \int_{\Omega} |\partial_x \phi| \diff x \leqslant \mathcal{B}\left(1+\int_{\Omega}|\partial_x\phi| \diff x\right).
\end{equation}
Moreover, by applying a Gr\"onwall type argument to Inequality \eqref{eq:pregronwall}, it follows that $\phi$ formally satisfies the inequality
\begin{equation}\label{eq:gronwall}
      \|\partial_x \phi(t)\|_{L^1(\Omega)} +1 \leqslant \left(\|\partial_x \phi_0\|_{L^1(\Omega)} +1\right)\exp\left(\int_0^t \mathcal{B}(t)\diff t\right)
\end{equation}
and hence, $\phi \in L^\infty([0,T];BV(\Omega))$ if $\phi_0 \in BV(\Omega)$. 
Further, if $\phi$ further satisfies Property \ref{introC}, then it follows that $r \in L^\infty([0,T];BV(\Omega))$ if $r_0 \in BV(\Omega)$.

\subsection*{The Viscous Approximation}

So far, the introductory exposition has concerned a formal argument with which to establish $L^\infty([0,T];BV(\Omega))$ estimates for $r$, here we discuss how a smooth approximation of System \eqref{eq:cdid} can be constructed such that those arguments can also be made rigorous.

\medskip

Let $\eta \in I \coloneqq (0,1]$. 
Then, the viscous regularisation of System \eqref{eq:cdid} is given by
\begin{equation}\label{eq:vcdidintro}
\begin{cases}
    \partial_t \rho_1 = \eta\partial_{xx}^2\rho_1 + \partial_x(\rho_1\partial_x(f'(\rho_1+\rho_2)+V_1)),\\
    \partial_t \rho_2 = \eta\partial_{xx}^2\rho_2 + \partial_x(\rho_2\partial_x(f'(\rho_1+\rho_2)+V_2)),
\end{cases}
\text{in } (0,T)\times \Omega,
\end{equation}
and equipped with the no-flux boundary conditions
\begin{equation}\label{eq:bdvcdid}
\begin{cases}
   \eta \partial_x\rho_1 + \rho_1\partial_x(f'(\rho_1+\rho_2)+V_1) = 0, \\
   \eta \partial_x\rho_2 + \rho_2\partial_x(f'(\rho_1+\rho_2)+V_2) = 0, 
\end{cases}
\text{ on } (0,T)\times \partial\Omega.
\end{equation}
\medskip

The well-posedness of a classical solution of System \eqref{eq:vcdidintro} with sufficient regularity follows from the classical quasi-linear parabolic theory established by \cite{Amannqlsexistence,acquistapaceqps,Ladyzhenskaya}.
We prove this result in Appendix \hyperref[appendix:A]{A}, for the reader's convenience.

\medskip 

The existence of the second order term in System \eqref{eq:vcdidintro} of course means it is no longer directly possible to produce $L^\infty([0,T];BV(\Omega))$ by arguing via Properties \ref{introA}, \ref{introB} and \ref{introC}.
Indeed, if one is given $(\rho_1,\rho_2)$: a solution of the viscous System \eqref{eq:vcdidintro}, then, after making the coordinate transformation 
\[
(\rho_1,\rho_2) \mapsto (\sigma,\phi),
\] 
the evolution equation for $\phi$ is of second order and hence can not satisfy the transport equation specified in property \ref{introA}.

\medskip 

Instead, this manuscript introduces a family of ratio functions $\phi^\eta$, such, that for each $\eta \in I$, the following properties are satisfied.
\begin{enumerate}[label=B$\arabic*$., ref = B$\arabic*$]
\item
The evolution equation for $\phi^\eta$ is of the form
    \begin{equation}
    \begin{cases}
         \partial_t\phi^\eta = \eta\partial_{xx}\phi^\eta + a^\eta \partial_x \phi^\eta + b^\eta \text{ in } (0,T)\times \Omega, \\
         \partial_x \phi^\eta = 0 \text{ on } (0,T)\times \partial\Omega. 
     \end{cases}
    \end{equation}
    for suitably defined coefficient functions $a^\eta,b^\eta\co[0,T]\times \Omega \to \R.$ \label{B1}
\item 
The pair $(\phi^\eta, b^\eta)$ 
admits the existence of $\mathcal{B} \in L^1([0,T])$ for which the inequality
\begin{align}\label{eq:vB}
\|\partial_x b^\eta(t)\|_{L^1(\Omega)}\leqslant \mathcal{B}(t)\left(1+ \|\partial_x \phi^\eta\|_{L^1(\Omega)}\right)
\end{align}
is satisfied for almost every $t \in [0,T]$, uniformly in $\eta$. \label{B2}
\item 
There exist $\Lambda_1,\Lambda_2 > 0$ such that the following inequality holds, uniformly in $[0,T]$, uniformly in $\eta$.
 \begin{align*}
     \|\partial_x \phi^\eta(t)\|_{L^1(\Omega)} 
     &\leqslant \Lambda_1(\|\partial_x r(t)\|_{L^1(\Omega)}+1) \\
     &\leqslant \Lambda_2(\|\partial_x \phi(t) \|_{L^1(\Omega)}+1 ).
    \end{align*}
    \label{B3}
\end{enumerate}
Consider a family of inhomogeneous ratio functions $(\phi^\eta)_{\eta \in I}$ and $(r,\sigma)$ derived from a solution of System \eqref{eq:vcdid}, such that $\phi^\eta$ satisfies the Properties \ref{B1}-\ref{B2}. 
Then, the energy dissipation inequality for the time dissipation of the map $t \mapsto \|\partial_x\phi^\eta(t)\|_{L^1(\Omega)}$ may be controlled by a Gr\"onwall argument, in much similarity to the argument presented for Properties \ref{introA}-\ref{introC}.
The argument requires one notable modification in the viscous setting. 
\begin{itemize}
    \item The third order term which presents itself in the evolution equation for $|\partial_x \phi^\eta|$ is shown to be non-positive via a Kato inequality. 
\end{itemize}

To conclude the estimate, Property \ref{B3} ensures that a uniform estimate for $\phi$ in  $L^\infty([0,T];BV(\Omega))$ is sufficient to obtain a uniform $L^\infty([0,T];BV(\Omega))$ estimate for $r$ and that, to close the Gr\"onwall inequality, the assumption $\phi_0 \in BV(\Omega)$ is equivalent to the assumption that $r_0 \in BV(\Omega)$.

\subsection*{The Approximation Parameters}
System \ref{eq:cdid} and its viscous approximation, System \ref{eq:vcdid}, are of second order.
Meanwhile, the energy
\[
t \mapsto 
\|\partial_x\phi^\eta(t)\|_{L^1(\Omega)}
\]
is of first order. 
Consequently, the dissipation of this energy is described by an evolution equation of third order.
Thus, in order to precisely derive the relevant energy dissipation inequality, with which to obtain an $L^\infty([0,T];BV(\Omega))$ estimate, it is established that, for each $\eta \in I$, the variable 
$\phi^\eta$ is regular in the class 
\[
C([0,T];C^3(\overline{\Omega})) \cap C^1([0,T];C^1(\overline{\Omega})).
\]
To establish such regularity for the viscous approximation, the addendum of a viscosity term is insufficient.
Indeed, in order to derive the energy dissipation equality, such regularity must hold globally in the domain: up to the initial time and the boundary $\partial\Omega$. 
This means that the initial data and potentials must be taken to be suitably smooth and satisfy particular compatibility conditions at the boundary.

\medskip 

In particular, given a set of initial data and potential functions for the weak formulation, 
a smooth approximation of these objects is produced such that the $BV$ a priori estimates of the viscous system may be established uniformly in $\eta$. 
Then, after having obtained sufficient strong compactness via an Aubin--Lions type Lemma, the approximating potentials and initial data are passed to their limiting objects in concert with the disappearance of the viscosity term.
\subsection{Main Theorem}
To guarantee the existence of weak solutions to System \eqref{eq:weakcdid}, in the sense of Definition \ref{mainsolution}, it may first be assumed that $f$ corresponds exactly to a fast-diffusive power law or a logarithmic pressure and so satisfies the following assumption.
\begin{hypothesis}\label{hypothesis1}
Either:
     \begin{enumerate}[label=$(\mathbf{H\arabic*})$, ref = $\mathbf{H\arabic*}$]
    \item There exists $\alpha \in (0,1)$ and $\lambda > 0$ such that 
     \[
     f(s) \coloneqq \frac{\lambda}{\alpha-1}s^{\alpha};
     \]
    \item There exists $\lambda > 0$ such that 
    \[
    f(s) \coloneqq \lambda s\log(s).
    \]
\end{enumerate}
\end{hypothesis}
On the other hand, the weak existence theory can also be proven to hold for a much larger class of pressure laws, behaving either like either a fast-diffusive or a uniformly parabolic pressure. 
In this paper, the exposition focuses on providing a weak existence theory for a general class of pressure laws, which correspond to a fast diffusion.
That is, satisfying the following hypothesis which may be made alternately to Hypothesis \ref{hypothesis1}.
\begin{hypothesis}\label{hypothesis2}~
 \begin{enumerate}[label=$(\mathbf{I\arabic*})$, ref = $\mathbf{I\arabic*}$]
    \item \label{I1}
    There exists $\beta \in (0,1]$ such that
    $f \in C^{4,\beta}_{loc}(0,+\infty)$, $f'' > 0$ 
    \item \label{I2}
    There exists $\alpha \in (0,1)$ and $ \kappa > 0$ such that
    \[
   \frac{1}{\alpha\kappa}  s^\alpha \leqslant s^2 f''(s) \text{ for } s \in (0,+\infty); 
   \]  
   \item\label{I3}
   For the same choice of $\kappa$, the inequality 
    \[
    \left|\frac{sf'''(s)}{f''(s)}\right| \leqslant {\kappa}  \text{ for } s \in (0,+\infty)
    \]
    is satisfied.
    \item\label{I4}
    The map $\mathscr{H}\co \mathscr{M}_{+}^{ac}(\Omega) \to \R \cup \{+\infty\}$ given by
    \[
     \mathscr{H}(\mu) \coloneqq \int_{\Omega} f(\mu) \diff x
    \]
    admits a constant $\mathcal{K}_{\inf} > 0$ such that the following inequality holds for all $\mu \in \sP^{ac}(\Omega)$.
    \[
    \mathscr{H}(\mu) > -\mathcal{K}_{\inf}.
    \]
     \end{enumerate}
\end{hypothesis}
Then, given a choice of pressure potential $f$ satisfying either of the Hypotheses \ref{hypothesis1} or \ref{hypothesis2}, the following assumptions are further made on the potentials and the initial data.
\begin{hypothesis}\label{hypothesis3}
Given $f$ satisfying either Hypothesis \ref{hypothesis1} or Hypothesis \ref{hypothesis2}, it is assumed that:
\begin{enumerate}[label=$(\mathbf{J\arabic*})$, ref = $\mathbf{J \arabic*}$]
\begin{multicols}{2}
    \item \label{J1}
    $ \displaystyle \rho_{1,0},\rho_{2,0} \in \sP^{ac}(\Omega)$;
    \item \label{J2}
    $\displaystyle \mathrm{TV}\left(\frac{\rho_{1,0}}{\rho_{1,0}+\rho_{2,0}}\right)<+\infty$;
    \item \label{J3}
    $\displaystyle V_1-V_2\in W^{3,1}(\Omega);$
    \item \label{J4}
    $\displaystyle V_1,V_2 \in W^{2,1}(\Omega);$
    \item \label{J5}
    $\displaystyle \partial_x V_1 = \partial_x V_2 = 0 \text{ on } \partial\Omega$.
    \item \label{J6}
    $\displaystyle \mathscr{H}(\rho_{1,0}+\rho_{2,0}) < +\infty$.
    \end{multicols}
\end{enumerate}
If $f$ satisfies Hypothesis \ref{hypothesis2} but not \ref{hypothesis1} then further assume that:
\begin{enumerate}[label=$(\mathbf{J\arabic*})$,resume*, ref = $\mathbf{J \arabic*}$]
    \item \label{J7}
    If $\alpha \in [\frac{2}{3},1)$ then there exists $c_{\inf} > 0$ such that 
    \[
    (\rho_{1,0}+\rho_{2,0}) > c_{\inf} > 0 \text{ on } \overline{\Omega}.
    \]
\end{enumerate}
\end{hypothesis}
In detail, the Main Theorem of this manuscript then reads:
\begin{thm}\label{maintheorem}
    Assume that $f$ satisfies either Hypothesis \ref{hypothesis1} or Hypothesis \ref{hypothesis2} and that the potentials and initial data satisfy Hypothesis \ref{hypothesis3}.
    Then, there exists a weak solution of System \eqref{eq:cdid}.
\end{thm}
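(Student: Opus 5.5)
The proof follows the strategy outlined in the introduction: viscous approximation, then uniform estimates, then compactness, then passage to the limit.

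\textbf{Step 1: regularised data and existence for the viscous system.} First regularise the data. Mollify the potentials to smooth $V_i^\e$ with $\partial_x V_i^\e = 0$ on $\partial\Omega$, preserving the bounds in \eqref{J3}--\eqref{J4}. Replace the initial data by smooth, strictly positive $\rho_{i,0}^\e$ satisfying the compatibility (no-flux) conditions \eqref{eq:bdvcdid}. Choose these so that $\mathrm{TV}(r_0^\e)$ and $\mathscr{H}(\sigma_0^\e)$ stay uniformly bounded, and, under \eqref{J7}, so that $\sigma_0^\e > c_{\inf}/2$. For each $\eta$, the quasilinear parabolic theory (Appendix A) then gives a classical solution $(\rho_1^\eta,\rho_2^\eta)$ of \eqref{eq:vcdidintro}--\eqref{eq:bdvcdid}. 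It is positive by the strong maximum principle and regular enough that $\phi^\eta \in C([0,T];C^3(\overline{\Omega}))\cap C^1([0,T];C^1(\overline{\Omega}))$. To keep indices light, couple $\e$ and $\eta$ in a single parameter.

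\textbf{Step 2: estimates on the aggregate.} Next derive the $\sigma$-estimates. Use the equation \eqref{eq:nlfp} with bounded drift $\bs v$ (plus viscosity), and test with $f'(\sigma)$ and with powers of $\sigma$. Hypotheses \eqref{I2}, \eqref{I4} and \eqref{J6} then give the entropy dissipation bound
\[
\int_0^T\!\!\int_\Omega \sigma|\partial_x f'(\sigma)|^2 \le C,
\]
and hence $L^p_{t,x}$ bounds on $\partial_x g(\sigma^\eta)$ and $L^1_tBV$ bounds on $\sigma^\eta$, uniformly in $\eta$. In the regime $\alpha\ge 2/3$ I expect to also need a lower bound on $\sigma^\eta$, propagated from \eqref{J7} by a comparison argument; this is needed to control $\xi(\sigma)$-type terms.

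\textbf{Step 3: the BV estimate on $r$ (main obstacle).} Define
\[
\phi^\eta = \frac{r\,\omega^\eta}{1-r+r\,\omega^\eta},
\]
with $\omega^\eta$ as in \eqref{eq:inhomogeneousratio}, and choose $\xi$ so that the cross terms involving $\partial_x f'(\sigma)$ cancel in the $\phi^\eta$ equation. The tasks are then:
\begin{itemize}
\item verify \ref{B1}, including the Neumann condition, which comes from \eqref{J5} and the no-flux condition;
\item verify \ref{B2}, bounding $\partial_x b^\eta$ by $|\partial_x(V_1-V_2)|$, $|\partial_{xx}(V_1-V_2)|$ and $|\partial_{xxx}(V_1-V_2)|$ (this is why \eqref{J3} is needed) times quantities in $\sigma$ that are integrable in time by Step 2, using \eqref{I3} to control $\xi'$;
\item verify \ref{B3}, using that $\omega^\eta$ is bounded above and below uniformly, since $\int|\partial_x(V_1-V_2)|\,|\xi(\sigma)|$ is controlled.
\end{itemize}
With these in hand, compute $\partial_t\int|\partial_x\phi^\eta|$. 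The second-order term is handled by Kato's inequality, and the boundary terms vanish by the Neumann condition. Gr\"onwall as in \eqref{eq:gronwall} then gives $r^\eta$ bounded in $L^\infty_tBV$, uniformly in $\eta$.

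I expect this to be the hardest part: choosing $\xi$ so that the bad terms cancel, and ensuring $\mathcal B\in L^1$ given only the fast-diffusion integrability of $\sigma$. I would first try $\xi$ defined through $s\xi'(s)f''(s)$ matching the viscous correction, and handle the $\eta$-dependent remainder terms by absorbing them with the viscosity.

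\textbf{Step 4: compactness and passage to the limit.} Since $\rho_1^\eta=r^\eta\sigma^\eta$ with $r^\eta\in[0,1]$ bounded in $L^\infty BV$ and $\sigma^\eta$ bounded in $L^1BV$, the densities $\rho_i^\eta$ are bounded in $L^1_tBV$. Their time derivatives are bounded in a negative Sobolev space by the equation. Aubin--Lions then gives strong $L^1$ convergence of $\rho_i^\eta$ and $\sigma^\eta$. Write the flux as
\[
\rho_i\,\partial_x f'(\sigma)=r_i\,\partial_x g(\sigma),
\]
where $r_i$ is the ratio (bounded). The factor $r_i^\eta$ converges strongly and $\partial_x g(\sigma^\eta)$ converges weakly in $L^1$, by equi-integrability from Step 2. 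The viscous term $\eta\partial_{xx}\rho_i$ vanishes in the weak formulation, and the drift terms converge by the strong convergence of $V_i^\e$ in $W^{1,1}$ together with the $L^\infty$ bound on $\partial_xV_i$. This yields \eqref{eq:weakcdid}.
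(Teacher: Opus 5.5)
Your skeleton matches the paper's: viscous approximation, aggregate estimates, the inhomogeneous ratio with \ref{B1}--\ref{B3}, Kato plus Gr\"onwall, then Aubin--Lions. Steps 2 and 4 are fine at this level. Testing the aggregate equation with $f'(\sigma)$ is even a slightly cheaper route to the Fisher bound than the paper's $\mathscr{F}_\eta$.

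\textbf{The core gap (Step 3).} It sits exactly where you flagged the obstacle, and the remedy you sketch (a single $\xi$, with the remainder ``absorbed by the viscosity'') does not work. Set $\psi=\log\frac{\phi}{1-\phi}=\log\frac{r}{1-r}+\log\omega$. Among the terms of $\partial_t\psi-\eta\partial_{xx}\psi$ that do not involve $\partial_x\psi$, collect the local ones proportional to $\partial_x\log\sigma\,\partial_x(V_1-V_2)$. They come from four places: the zeroth-order term of the $r$-equation; $\xi(\sigma)$ times the transport coefficient $2\eta\partial_x\log\sigma+\partial_xf'(\sigma)+\bs{v}_r$; the local part of $\partial_t\log\omega$; and the term $\eta\,\partial_x\xi(\sigma)\,\partial_x(V_1-V_2)$ produced when $\eta\partial_{xx}r$ is converted into $\eta\partial_{xx}\phi$. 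Their total coefficient is $1+\xi(2\eta+\sigma f'')-\sigma^2 f''\xi'$.

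For an $\eta$-independent $\xi$, for instance the $\eta=0$ solution of $s^2f''\xi'-sf''\xi=1$, this coefficient equals $2\eta\xi$. A source term $2\eta\,\xi(\sigma)\,\partial_x\log\sigma\,\partial_x(V_1-V_2)\,\phi(1-\phi)$ therefore survives in $b^\eta$. It contains no $\partial_x\phi$, so it cannot be moved into the transport part. \ref{B2} would then require $\eta\,\xi(\sigma)\,\partial_{xx}\log\sigma$ to be bounded in $L^1_{t,x}$ uniformly in $\eta$, and none of the first-order aggregate estimates give that.

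The missing idea is to let the nonlinearity depend on $\eta$. Take $\xi_\eta$ solving $s^2f''(s)\xi_\eta'(s)-(2\eta+sf''(s))\xi_\eta(s)=1$, given explicitly by \eqref{eq:xi}. With this choice all of these terms cancel exactly. You then need $|\xi_\eta|\le|\xi|$ and the $\eta$-uniform bounds on $\xi_\eta'$ and $s^2f''\xi_\eta''$ from Proposition \ref{xiepstoxi}.

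\textbf{The cost of that choice, and the lower bound for $\alpha\ge\frac{2}{3}$.} The bounds on $\xi_\eta'$ and $\xi_\eta''$ carry $\eta$-weighted higher powers of $\sigma$. Consequently $\|\mathcal{B}\|_{L^1([0,T])}$ involves quantities such as $\eta\|\sigma^{2-2\alpha}\|_{L^1_tW^{1,1}_x}$, $\eta^2\|\partial_x\sigma^{3-3\alpha}\|_{L^1_{t,x}}$ and $\eta^3\|\partial_x\sigma^{\frac{3-3\alpha}{2}}\|_{L^2_{t,x}}$. For merely integrable data (e.g.\ small $\alpha$) the a priori estimates do not control these. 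Here your coupling of the regularisation parameter to $\eta$ is load-bearing, not notational. The paper lets the upper and lower bounds of $\sigma_0^\eta$ degenerate slowly enough that the $L^\theta$ and negative-power estimates of Theorems \ref{sumestimates} and \ref{reciprocalestimates} grow more slowly than the $\eta$-prefactors vanish.

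For $\alpha\ge\frac{2}{3}$, all that is needed is a bound on $\|\sigma^{1-2\alpha}\|_{L^1_{t,x}}$. The paper obtains it from the $p<0$ power estimate started from \ref{J7}, where Young's inequality absorbs the drift without differentiating it. The pointwise comparison argument you propose is not available. A spatially constant subsolution requires $\partial_x\bs{v}_\sigma\ge -C$, but $\partial_x\bs{v}_\sigma$ contains $\partial_x r\,\partial_x(V_1-V_2)$. That term is never controlled pointwise; at best it is controlled in $L^1$, and only as an output of Step 3.

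\textbf{A smaller point in Step 4.} Aubin--Lions is applied to $\rho_i^\eta$, not to $r^\eta$, so ``$r^\eta$ converges strongly'' should read: $r^\eta$ converges a.e.\ on $\{\sigma>0\}$. On $\{\sigma=0\}$ you then use $|\partial_xg(\sigma^\eta)|\le\sqrt{\sigma^\eta}\,|\sqrt{\sigma^\eta}\,\partial_xf'(\sigma^\eta)|$. The paper's splitting $\sqrt{\sigma}\,\partial_xf'(\sigma)\cdot\rho_i/\sqrt{\sigma}$ avoids this issue altogether.
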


\begin{remark}\label{ratioremark}
Concerning Assumption \ref{J2}, we remark that, for a generic choice of $\rho_{1,0},\rho_{2,0} \in \sP^{ac}(\Omega)$, the ratio 
\[
\frac{\rho_{1,0}}{\rho_{1,0}+\rho_{2,0}}
\]
may not exist, even in a distributional sense.
However, in Proposition \ref{ratiopropn} it is shown that, given any $\rho_{1,0},\rho_{2,0} \in \sP^{ac}(\Omega)$, there exists $r_0 \co \Omega \to [0,1]$ which satisfies the equality
\begin{equation}\label{eq:rnotunique}
  r_0(\rho_{1,0}+\rho_{2,0}) = \rho_{1,0}  
\end{equation}
almost everywhere in $\Omega$.

\medskip 
It may not be the case that a function $r_0$ satisfying Equation \eqref{eq:rnotunique} is uniquely defined, even on the support of $\rho_{1,0}$, and so instead Assumption \ref{J2} should be understood to mean that:

\begin{itemize}
    \item Of the functions $r_0\co\Omega \to [0,1]$ satisfying Equation \eqref{eq:rnotunique}, there exists a representative such that $TV(r_0) < +\infty$.
\end{itemize}
\end{remark}

\begin{remark}\label{remarkJ7}
    It is expected that Assumption \ref{J7} is only technical for $f$ satisfying Hypothesis \ref{hypothesis2}.
    Indeed, if $f$ satisfies Hypothesis \ref{hypothesis2} which correspond exactly to a power law (as in Hypothesis \ref{hypothesis1}) then it will be shown that Assumption \ref{J7} is unnecessary for the regularity theory (cf. Remark \ref{remark:bounds}.
\end{remark}

\subsection*{Structure}
The manuscript is hence structured as follows:
\begin{itemize}
    \item In Section \hyperref[sec:approx]{Two}, the well-posedness of a smooth viscous approximation is established.
    \item In Section \hyperref[sec:ratio]{Three}, the non-linearity $\xi$ and the inhomogeneous ratio $\phi$ are introduced. 
    The evolution equation for $\phi$ is derived and Properties \ref{B1} and \ref{B3} are established.
    \item In Section \hyperref[sec:aggregate]{Four}, a priori estimates are derived for the aggregate density. This facilitates the proof of Property \ref{B2}.
    \item In Section \hyperref[sec:BV]{Five}, we conduct the main energy dissipation argument, obtaining $L^\infty([0,T];BV(\Omega))$ estimates for $r$.
    \item In Section \hyperref[sec:convergence]{Six}, sufficient compactness is established and the Main Theorem \ref{maintheorem} is proven by studying the convergence of the viscous approximation.
\end{itemize}

\subsection*{Notation and Preliminaries}
Let $\sP(\Omega)$ denote the set of probability measures on $\Omega$ and let $\sP^{ac}(\Omega)$ denote the set of $\mu \in \sP(\Omega)$ which admit a density with respect to the Lebesgue measure on $\Omega$.

\medskip 

For $k \in \N, p \in [1,+\infty]$, the set $W^{k,p}(\Omega)$ denotes the usual Sobolev space admitting $k$ weak derivatives of $p$-integrability with the usual identification $H^k(\Omega) = W^{k,2}(\Omega)$. 
Meanwhile, for $p \in [1,\infty)$, the set $W^{k,p}_0(\Omega)$ denotes the closure of $C_c^\infty(\Omega)$ within $W^{k,p}(\Omega)$. 
For $p \in (1,+\infty)$ the dual of $W^{k,p}_0(\Omega)$ is denoted $W^{-k,q}(\Omega)$ where $q$ is the H\"older conjugate of $p$.

\medskip 

Given $f \in L^1(\Omega)$ the total variation of $f$ denoted $\mathrm{TV}(f)$ is defined by 
\[
\mathrm{TV}(f) \coloneqq \sup_{\varphi \in X^{TV}} \int_{\Omega} f \partial_x \varphi \diff x , \quad X^{TV}\coloneqq \{\varphi \in C^1_c(\Omega) \ | \ \|\varphi\|_{L^\infty(\Omega)} \leqslant 1\}.
\]
Then, the space of functions of bounded variation on $\Omega$, denoted $BV(\Omega)$ is given by
\[
BV(\Omega) \coloneqq \{f \in L^1(\Omega) \ | \ \mathrm{TV}(f) < +\infty\}.
\]
Equipped with the norm 
\[
\|f\|_{BV(\Omega)} \coloneqq \|f\|_{L^1(\Omega)} + \mathrm{TV}(f),
\]
the space $BV(\Omega)$ defines a Banach space.

\medskip 
Moreover, for any $f \in W^{1,1}(\Omega)$, the equality 
\[
\mathrm{TV}(f) = \|\partial_x f\|_{L^1(\Omega)}
\]
and so, throughout this manuscript and when referring to functions of $W^{1,1}(\Omega)$ regularity, the two quantities will be used interchangeably depending on the context.

\medskip 

Lastly, in this manuscript a constant $c_i$ denotes a constant which will be used locally with in a proof.
A constant $\mathcal{K}_i$ denotes a global constant whose dependence will be kept track of.
\newpage
\section{The Viscous Approximation}\label{sec:approx}
Recall that the viscous approximation of System \eqref{eq:cdid} is given as follows.
\begin{equation}\label{eq:vcdid}
\begin{cases}
\begin{alignedat}{2}
  \partial_t \rho_i &= \eta\partial_{xx}\rho_i +\partial_x(\rho_i\partial_x(f'(\rho_1+\rho_2)+V_i)) &&\text{ in } (0,T)\times \Omega,\\
    \rho_{i,0} &= \rho_i &&\text{ on } \{ 0 \} \times \overline{\Omega}, \\
     0 &= \eta\partial_x\rho_i + \rho_i\partial_x(f'(\rho_1+\rho_2)+V_i) &&\text{ on } (0,T)\times \partial\Omega.
\end{alignedat}
\end{cases}
\end{equation}
To guarantee the existence of a unique classical solution to System \eqref{eq:vcdid}, possessing suitable regularity, Assumptions \ref{K1}-\ref{K4} are made. 

 \begin{enumerate}[label=$(\mathbf{K \arabic*})$, ref = $\mathbf{K\arabic*}$]
    \item \label{K1}
    $V_1,V_2\in C^{3,1}(\overline{\Omega})$ and $\rho_{1,0},\rho_{2,0}  \in C^{3,1}(\overline{\Omega})\cap \sP(\Omega)$;
    \item \label{K2}
    $\rho_{1,0} + \rho_{2,0} > 0 \text{ on } \overline{\Omega}$;
    \item \label{K3}
    The first and second compatibility conditions: Equations \eqref{eq:zeroorder} and \eqref{eq:firstorder} are satisfied, for $i \in \{1,2\}$, on  $\partial\Omega$.
    \item\label{K4}
    There exists $\beta \in (0,1]$ such that $f \in C^{4,\beta}_{loc}(0,+\infty)$ and $f'' \geqslant 0$ on $(0, +\infty)$.
\end{enumerate}
For each $i \in \{1,2\}$, the first  compatibility condition reads:
\begin{equation}\label{eq:zeroorder}
    \eta\partial_x\rho_{i,0} + \rho_{i,0} \partial_x (f'(\rho_{1,0}+\rho_{2,0})+V_i) = 0.
\end{equation}
To concisely define the second compatibility condition, consider the operators 
$\mathfrak{D}_i\co C^2(\Omega;G) \to C(\Omega)$ given, for $ i \in \{1,2\}$ by 
\begin{equation}\label{eq:boundaryop}
\begin{split}
    \mathfrak{D}_1 (\varphi,\psi) &\coloneqq \eta\partial_{xx}\varphi +\partial_x(\varphi\partial_x(f'(\varphi+\psi)+V_i)),\\
    \mathfrak{D}_2 (\varphi,\psi) &\coloneqq \eta\partial_{xx}\psi +\partial_x(\psi\partial_x(f'(\varphi+\psi)+V_i)).
\end{split}
\end{equation}
Then, for each $i \in \{1,2\}$, the second compatibility condition reads:
\begin{equation}\label{eq:firstorder}
\begin{split}
    \eta\partial_x&\mathfrak{D}_i(\rho_{1,0},\rho_{2,0}) + \mathfrak{D}_i(\rho_{1,0},\rho_{2,0}) \partial_x (f'(\rho_{1,0}+\rho_{2,0})+V_i) \\
    & + \rho_{i,0}\partial_x (f''(\rho_{1,0}+\rho_{2,0})\mathfrak{D}_1(\rho_{1,0},\rho_{2,0})+ \mathfrak{D}_2(\rho_{1,0},\rho_{2,0})) = 0.
\end{split}
\end{equation}
\begin{thm}\label{classical}
    Assume that \ref{K1}-\ref{K4} are satisfied. 
    Then, there exists a unique classical solution of System \eqref{eq:vcdid} with initial data $(\rho_{1,0},\rho_{2,0})$. 
    In particular, this solution possesses the regularity 
    \[
    \rho_1,\rho_2 \in C([0,T];C^3(\overline{\Omega})) \cap C^1([0,T];C^1(\overline{\Omega}))
    \]
    and satisfies System \eqref{eq:vcdid} pointwise on $[0,T)\times \overline{\Omega}$.
    \end{thm}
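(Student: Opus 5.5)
We treat System \eqref{eq:vcdid} as a quasilinear parabolic system in divergence form for $u=(\rho_1,\rho_2)$:
\[
\partial_t u = \partial_x\big(A(u)\partial_x u + F(x,u)\big), \qquad A(u) = \eta I + f''(\rho_1+\rho_2)\begin{pmatrix}\rho_1 & \rho_1\\ \rho_2 & \rho_2\end{pmatrix},
\]
where $F_i=\rho_i\partial_x V_i$, with the nonlinear conormal boundary condition $\big(A(u)\partial_x u+F\big)\cdot\nu=0$. The matrix $f''(\sigma)\begin{pmatrix}\rho_1&\rho_1\\ \rho_2&\rho_2\end{pmatrix}$ has eigenvalues $0$ and $f''(\sigma)\sigma\geqslant 0$ whenever $\rho_i\geqslant0$. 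So $A(u)$ has spectrum in $[\eta,+\infty)$, and the system is normally elliptic in Amann's sense on the open set $G\coloneqq\{u: \rho_1>-\delta,\rho_2>-\delta,\rho_1+\rho_2>0\}$. We first extend $f''$ suitably (a cutoff) outside this region so that the coefficients are smooth, of class $C^{2,\beta}$ by \ref{K4}, on a neighbourhood of the range of the initial data, which is compact in $G$ by \ref{K2}. We then invoke Amann's local existence theorem for quasilinear parabolic systems with nonlinear boundary conditions \cite{Amannqlsexistence}. This gives a unique maximal solution $u\in C([0,T^*);W^{1,p})\cap C^\infty((0,T^*)\times\overline\Omega)$, together with the blow-up alternative: if $T^*<T$, then either $\|u(t)\|_{C^{\theta}}$ blows up or $u(t)$ leaves every compact subset of $G$.

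Next we rule out both alternatives on $[0,T]$ by a priori bounds. First, each $\rho_i$ solves a linear drift--diffusion equation $\partial_t\rho_i=\eta\partial_{xx}\rho_i+\partial_x(\rho_i b_i)$ with zero-flux condition. Testing against $\rho_i^-$ (Stampacchia) gives $\rho_i\geqslant 0$, and mass is conserved. Second, the aggregate $\sigma$ satisfies $\partial_t\sigma=\partial_x\big((\eta+\sigma f''(\sigma))\partial_x\sigma+\sigma\bs v\big)$ with $\bs v$ bounded by $\|\partial_xV_i\|_\infty$. A comparison or maximum-principle argument on this scalar equation, written in nondivergence form with $V_i\in C^{3,1}$, then yields bounds $0<c(\eta,T)\leqslant\sigma\leqslant C(\eta,T)$. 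The lower bound is the key point, since it keeps $f''(\sigma)$ away from its singularity at $0$, and it follows because $\sigma_0>0$ and the zeroth-order term $\sigma\,\partial_x\bs v$ is linear in $\sigma$ once $\bs v$ is controlled. This step needs care: $\partial_x\bs v$ involves $\partial_x r$. We therefore instead use the nondivergence form $\partial_t\sigma=a\partial_{xx}\sigma+\dots+\rho_1\partial_{xx}V_1+\rho_2\partial_{xx}V_2+\partial_x\rho_1\partial_xV_1+\dots$, and treat the terms $\partial_x\rho_i$ via the individual equations at extremal points. Alternatively, we control $\partial_x\bs v$ by a preliminary $L^\infty$ bound on $\partial_x\rho_i$, obtained from the $\eta$-parabolicity of each linear equation. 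With $u$ confined to a compact subset of $G$, the system is uniformly parabolic with bounded coefficients. Hölder bounds then follow from De Giorgi--Nash/Ladyzhenskaya theory \cite{Ladyzhenskaya} applied to the linear equations for $\rho_i$, with the conormal condition. Bootstrapping through Schauder estimates gives $T^*=T$.

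Finally, we upgrade regularity up to $t=0$. The first and second compatibility conditions \eqref{eq:zeroorder}, \eqref{eq:firstorder} are exactly those needed for the parabolic Schauder theory of order $2+2\gamma$ up to the corner $\{0\}\times\partial\Omega$. We differentiate the system once in $t$: $w=\partial_tu$ solves a linear parabolic system with conormal boundary condition, and its initial datum $\mathfrak D_i(\rho_{1,0},\rho_{2,0})\in C^{1,1}$ satisfies the zeroth-order compatibility condition precisely by \eqref{eq:firstorder}. The linear Schauder theory \cite{Ladyzhenskaya,acquistapaceqps} then gives $w\in C^{1+\gamma,(1+\gamma)/2}$ up to $t=0$. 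Hence $\partial_tu\in C([0,T];C^1(\overline\Omega))$, and by elliptic regularity in $x$ we get $u\in C([0,T];C^3(\overline\Omega))$, since $\partial_{xx}u$ is expressed through $\partial_tu$ and lower-order terms. Uniqueness follows from Amann's theorem, or from a direct Gronwall estimate on the difference of two classical solutions. The main obstacle I expect is the uniform positive lower bound on $\sigma$ combined with the Hölder control needed in the continuation argument, because $f''$ may be singular at $0$ (as in the fast-diffusion case) and the system is not in diagonal form.
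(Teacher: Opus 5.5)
Your overall architecture matches the paper's Appendix A in outline: Amann's local theory with a continuation criterion, a priori bounds to reach $T$, then regularity up to $t=0$ by differentiating in time and using \eqref{eq:zeroorder}--\eqref{eq:firstorder}. Your Stampacchia argument for $\rho_i\geqslant0$ also usefully makes explicit the fact $r\in[0,1]$, which the paper uses tacitly.

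The genuine gap is the step you yourself single out: the bound $0<c\leqslant\sigma\leqslant C$, uniform in the existence time. A maximum-principle argument for $\sigma$ does not close. At a spatial extremum of $\sigma$ one has $\partial_x\rho_2=-\partial_x\rho_1$, so the zeroth-order part of the equation is
\[
\partial_x(\sigma\bs v_\sigma)=\rho_1\partial_{xx}V_1+\rho_2\partial_{xx}V_2+\partial_x\rho_1\,\partial_x(V_1-V_2).
\]
The last term is unsigned and not controlled by $\sigma$, and nothing in the individual equations gives pointwise information on $\partial_x\rho_1$ at an extremum of $\sigma$. Your fallback is an $L^\infty$ bound on $\partial_x\rho_i$ from the linear equations $\partial_t\rho_i=\eta\partial_{xx}\rho_i+\partial_x(\rho_ib_i)$. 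This is circular: $b_i=f''(\sigma)\partial_x\sigma+\partial_xV_i$ can only be controlled once $\sigma$ is bounded from below (as $f''$ may blow up at $0$) and $\partial_x\sigma,\partial_{xx}\sigma$ are controlled. That is exactly what you are trying to prove.

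The missing idea is to never differentiate $\bs v_\sigma$ and to work in divergence form with integral estimates. Since $r\in[0,1]$, $\|\bs v_\sigma\|_{L^\infty}\leqslant\sum_i\|\partial_xV_i\|_{L^\infty}$. Testing the $\sigma$-equation against $\frac{1}{p-1}\sigma^{p-1}$ gives, for every $p\in\R\setminus\{0,1\}$,
\[
\frac{1}{p(p-1)}\partial_t\int_\Omega\sigma^p\diff x+\int_\Omega\big(\eta+\sigma f''(\sigma)\big)\sigma^{p-2}|\partial_x\sigma|^2\diff x=-\int_\Omega\sigma^{p-1}\partial_x\sigma\,\bs v_\sigma\diff x .
\]
Here the term with $\sigma f''(\sigma)\geqslant0$ is simply dropped, so the singularity of $f''$ at $0$ is harmless. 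The right-hand side is absorbed by the $\eta$-term via Young's inequality. With Gagliardo--Nirenberg this gives an Alikakos-type recursion, and the Moser--Alikakos iteration along $p=2^k$ and $p=-2^k$ yields the upper and lower bounds (Proposition \ref{pedi}, Corollary \ref{alikikosbound}).

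A secondary point: even with $\sigma$ bounded above and below, the $\rho_i$-equations are not ``uniformly parabolic with bounded coefficients'' in the sense needed for De Giorgi--Nash. Their drifts contain $f''(\sigma)\partial_x\sigma$, which the energy estimate only places in $L^2_{t,x}$, whereas one space dimension requires $L^q$ with $q>3$. Use the triangular structure in $(\sigma,r)$, as the paper does: first obtain $L^p$-gradient bounds for the scalar $\sigma$-equation, then $\mathcal{V}^2$ bounds for $r$ and $\sigma$. These supply the $H^1$ (hence H\"older) control required by Amann's continuation criterion.
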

\begin{proof}
    See Appendix \hyperref[appendix:A]{A}.
\end{proof}
Throughout the remainder of this manuscript, it is assumed that any classical solution of System \eqref{eq:vcdid} is constructed by means of  Appendix \hyperref[appendix:A]{A}.
Moreover, it is tacitly assumed that any such classical solution satisfies Assumptions \ref{K1}-\ref{K4} and \ref{J5}.

\section{An Inhomogeneous Ratio Variable}\label{sec:ratio}
In light of the definition of the viscous approximation, System \eqref{eq:vcdid}, a family of inhomogeneous ratio functions $(\phi^\eta)_{\eta \in I}$ is constructed.
For such a construction, it is also necessary to approximate the non-linearity $\xi$ which was aforementioned in Energy \ref{eq:energyformal}.
Thus, the section begins by defining such a family $(\xi_\eta)_{\eta \in I}$ and then studies the relationship between the functions $\xi$ and $\xi_\eta$.
\begin{definition}\label{xidefn}
    Define the non-linearity $\xi \co[0,+\infty) \to \R$ given by 
    \[
    \xi(s) \coloneqq -s\int_s^\infty \f{1}{y^3 f''(y)} \diff y. 
    \]
    Then, for $\eta \in I$, define the non-linearity $\xi_\eta \co [0,+\infty) \to \R$ such that 
        \begin{equation}\label{eq:xi}
         \xi_\eta(s) \coloneqq  -s \exp\left(\int_0^{s}\frac{2\eta}{y^2f''(y)}\diff y \right)\cdot \int_s^\infty  \exp\left(-\int_0^{y}\frac{2\eta}{z^2f''(z)}\diff z\right)\frac{1}{y^3f''(y)} \diff y.
        \end{equation}
    In particular, further choosing $\eta = 0$ in Equation \eqref{eq:xi}, the function $\xi_0$ satisfies $\xi_0 = \xi$.
\end{definition}
As a consequence of Definition \ref{xidefn}, the non-linearity $\xi_\eta$ solves the following  ordinary differential equation    for each $\eta \in I \cup \{0\}$.
    \begin{equation}\label{eq:ode}
            \xi_\eta'(s)s^2f''(s) - \xi_\eta(s)(2\eta + sf''(s)) = 1
        \end{equation}
\begin{remark}\label{remark:logxi}
    Note that, for the choice of logarithmic pressure,
    that is 
    \[
    f(s) \coloneqq \lambda s\log (s),
    \]
    the integral term given in the definition of $\xi$ is not finite.
    In this instance, the variable $\xi$ is instead given by 
    \[
    \xi(s) = \xi \coloneqq - \lambda.
    \]
\end{remark}
\subsection{Bounding the Non-Linearity}
For $f$ satisfying Hypothesis \ref{hypothesis1}, the non-linearity $\xi_\eta$ may be calculated explicitly and thus compared to $\xi$. 
On the other hand, if $f$ satisfies only Hypothesis \ref{hypothesis2}, it is not clear how $\xi_\eta$ and its respective derivatives relate to $\xi$. 
The following Proposition shows how $\xi_\eta, \xi_\eta'$ and $\xi_\eta''$ may be controlled by $\xi$ and its respective derivatives.

\begin{proposition}\label{xiepstoxi}
            Assume that $f$ satisfies Hypothesis \ref{hypothesis2} and let $\xi, \xi_\eta$ be as given in Definition \ref{xidefn}.
            Then, for each $\eta \in I$, the following inequalities are satisfied on $[0,+\infty)$.
       \begin{align}
                & |\xi_\eta(s)| \leqslant |\xi(s)|, \label{ord:zero} \\
                & |\xi'_\eta(s)|\leqslant  \frac{|\xi(s)|}{s} + \frac{1}{s^2f''(s)} + \frac{2\eta|\xi(s)|}{s^2f''(s)},\label{ord:one} \\
                & |\xi_\eta''(s) f''(s) s^2| \leqslant 2\eta |\xi'_\eta(s)| +  (1+ 2\eta|\xi(s)|)|(\log(f''(s)s))'|.\label{ord:two}
       \end{align}
       \end{proposition}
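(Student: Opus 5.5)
We argue directly from the integral representation \eqref{eq:xi} and the ODE \eqref{eq:ode}. We write $E_\eta(s) \coloneqq \exp\bigl(\int_0^s \frac{2\eta}{y^2f''(y)}\diff y\bigr)$. By \ref{I2} the integrand is bounded by $2\eta\alpha\kappa\, y^{-\alpha}$, which is integrable at $0$, so $E_\eta$ is well defined, nondecreasing, and $E_\eta\geqslant 1$. Then
\[
\xi_\eta(s) = -s\int_s^\infty \frac{E_\eta(s)}{E_\eta(y)}\,\frac{1}{y^3f''(y)}\diff y .
\]
For $y\geqslant s$ the ratio satisfies $E_\eta(s)/E_\eta(y)\in(0,1]$, and the remaining integrand is positive. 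Hence $\xi_\eta\leqslant 0$ and $\xi_\eta(s)\geqslant -s\int_s^\infty \frac{1}{y^3f''(y)}\diff y=\xi(s)$, which is \eqref{ord:zero}. The finiteness of $\xi(s)$ for $s>0$ follows again from \ref{I2}: $y^3f''(y)\geqslant y^{1+\alpha}/(\alpha\kappa)$, which is integrable at infinity. At $s=0$ both sides are read as limits, and $s\int_s^\infty y^{-1-\alpha}\diff y\sim s^{1-\alpha}\to0$.

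For \eqref{ord:one}, no further integration is needed. Solving \eqref{eq:ode} for $\xi_\eta'$ gives
\[
\xi_\eta'(s)=\frac{1+\xi_\eta(s)(2\eta+sf''(s))}{s^2f''(s)}=\frac{1}{s^2f''(s)}+\frac{\xi_\eta(s)}{s}+\frac{2\eta\,\xi_\eta(s)}{s^2f''(s)}.
\]
The triangle inequality together with \eqref{ord:zero} gives \eqref{ord:one}. One should check that \eqref{eq:ode} holds for $s>0$, which follows by differentiating the product in \eqref{eq:xi}. Since $f\in C^{4,\beta}_{loc}$, $\xi_\eta$ is $C^2$ on $(0,\infty)$, so the next step is legitimate.

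For \eqref{ord:two}, we differentiate \eqref{eq:ode} once:
\[
\xi_\eta''s^2f''+\xi_\eta'(s^2f'')'-\xi_\eta'(2\eta+sf'')-\xi_\eta(sf'')'=0 .
\]
Writing $(s^2f'')'=sf''+s(sf'')'$, the $sf''\xi_\eta'$ terms cancel, leaving
\[
\xi_\eta''s^2f''=2\eta\xi_\eta'+(sf'')'\bigl(\xi_\eta-s\xi_\eta'\bigr).
\]
By \eqref{eq:ode}, $\xi_\eta-s\xi_\eta' = -\frac{1+2\eta\xi_\eta}{sf''}$. Therefore
\[
\xi_\eta''s^2f''=2\eta\xi_\eta'-(1+2\eta\xi_\eta)\frac{(sf'')'}{sf''}=2\eta\xi_\eta'-(1+2\eta\xi_\eta)(\log(sf''))'.
\]
Taking absolute values, and using $|1+2\eta\xi_\eta|\leqslant 1+2\eta|\xi_\eta|\leqslant 1+2\eta|\xi|$ by \eqref{ord:zero}, gives \eqref{ord:two}.

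The main obstacle is not the algebra but the analytic justification at the endpoints. One must confirm that the improper integrals in \eqref{eq:xi} converge for every $\eta$, that differentiation under the integral and the product rule are valid, and that the inequalities are meaningful at $s=0$. At $s=0$, terms like $\frac{1}{s^2f''}$ may blow up, so the inequalities there are understood in the extended sense or only for $s>0$. I would handle this with \ref{I2} as above, and with \ref{I3} to ensure $(\log(sf''))'$ is locally bounded on $(0,\infty)$.
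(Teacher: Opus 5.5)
Your proposal is correct and follows the paper's proof essentially line by line. The zero-order bound comes from the monotonicity of $s\mapsto\exp\bigl(\int_0^s 2\eta/(y^2f''(y))\diff y\bigr)$; your pointwise bound $E_\eta(s)/E_\eta(y)\leqslant 1$ is the paper's supremum argument. The first-order bound comes from solving \eqref{eq:ode} for $\xi_\eta'$, and the second-order bound from differentiating \eqref{eq:ode} and substituting $\xi_\eta-s\xi_\eta'=-(1+2\eta\xi_\eta)/(sf'')$, exactly as in \eqref{eq:2ndode}. Your extra care about convergence of the integrals via \ref{I2} and about the endpoint $s=0$ is a useful addition that the paper omits. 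One small point: \ref{I3} is not needed here, since $(\log(sf''))'$ is already continuous on $(0,\infty)$ by \ref{I1}.
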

        \begin{proof}
        To prove the zero order inequality, observe the following upper bound.
        \begin{align*}
         \int_s^\infty  & \exp\left(-\int_0^{y}\frac{2\eta}{z^2f''(z)}\diff z\right)\frac{1}{y^3f''(y)} \diff y  \\
         \leqslant & \sup_{s\leqslant y}\left(\exp\left(-\int_0^{y}\frac{2\eta}{z^2f''(z)}\diff z\right)\right) \int_s^\infty  \frac{1}{y^3f''(y)} \diff y\\
          \leqslant &  \exp\left(-\int_0^{s}\frac{2\eta}{y^2f''(y)}\diff y\right)\int_s^\infty  \frac{1}{y^3f''(y)} \diff y.
        \end{align*}
        In the above, the first inequality holds as a consequence of H\"older's $L^1-L^\infty$ inequality.
        Meanwhile, the second Inequality hold's due to the inequality $f'' \geqslant 0$ which means that the subsequent supremum is achieved at $y=s$.

        \medskip 
        
        Multiplying both sides of the above inequality by the factor
        \[
        s\exp\left(\int_0^{s}\frac{2\eta}{y^2f''(y)}\diff y\right)
        \]
        it follows that 
        \[
           |\xi_\eta(s)| \leqslant |\xi(s)|.
        \]
        To derive the first order inequality, recall that $\xi_\eta$ satisfies \eqref{eq:ode}. Consequently, the equality 
        \[
            |\xi'_\eta(s)| \leqslant \frac{1}{s^2f''(s)} + \left(\frac{1}{s}+ \frac{2\eta}{s^2f''(s)}\right)|\xi_\eta(s)|
        \]
        is satisfied. 
        To conclude the verity of Inequality \ref{ord:one}, the right hand-side of the above inequality may further be bounded using Inequality \ref{ord:zero}.

        \medskip 

        The second order inequality is realised by differentiating both sides of Equation \eqref{eq:ode} in the $s$ variable.
        This yields the equality
        \begin{equation}\label{eq:2ndode}
        \begin{split}
            \xi''_\eta(s)f''(s)s^2 & = 2\eta \xi_\eta'(s) + (\xi_\eta (s) -s\xi_\eta'(s))(sf''(s))'\\
            & = 2\eta \xi_\eta'(s)  -(1+2\eta \xi_\eta(s))\frac{(sf''(s))'}{sf''(s)}\\
            & = 2\eta \xi_\eta'(s)  -(1+2\eta \xi_\eta(s))(\log(f''(s)s))'.    
        \end{split}
        \end{equation}
        Take absolute values on both sides of Inequality \eqref{eq:2ndode}.
        Then, by using Inequality \ref{ord:zero} to bound the zero order terms, it follows that 
       \[
              \left|\xi_\eta''(s) f''(s) s^2\right| \leqslant 2\eta |\xi'_\eta(s)| + (1+ 2\eta|\xi(s)|)|(\log(f''(s)s))'|.
        \]
        \end{proof}
        Utilising Assumptions \ref{I2} and \ref{I3}, the bounds established in Proposition \ref{xiepstoxi} show that $\xi_\eta$ can be dominated by a linear combination of power-laws parametrised in the variable $\eta$. 

         \begin{corollary}
         Assume that $f$ satisfies Hypothesis \ref{hypothesis2}. 
         Then, $\xi_\eta$ satisfies the following inequalities on $[0,+\infty)$ for each $\eta \in I$.
         \begin{align}
            & |\xi_\eta(s)| \leqslant \kappa s^{1-\alpha}, \label{power:zero} \\
            & |\xi'_\eta(s)| \leqslant (1+\alpha)\kappa s^{-\alpha} + 2\eta\alpha \kappa^2 s^{1-2\alpha}, \label{power:one}\\
            & |\xi_\eta''(s) f''(s) s^2| \leqslant (\kappa+1)s^{-1} + 2\eta \kappa(2+ \alpha + \kappa)s^{-\alpha} + 4\alpha\eta^2 \kappa^2s^{1-2\alpha}, \label{power:two}
         \end{align}
        \end{corollary}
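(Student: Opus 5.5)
The corollary follows by feeding Assumptions \ref{I2} and \ref{I3} into the three inequalities of Proposition \ref{xiepstoxi}. The first step is to bound $|\xi(s)|$ itself. By \ref{I2}, $\frac{1}{y^3 f''(y)} = \frac{1}{y\cdot y^2 f''(y)} \leqslant \alpha\kappa\, y^{-1-\alpha}$. Integrating from $s$ to $\infty$ gives $\int_s^\infty \frac{1}{y^3f''(y)}\diff y \leqslant \kappa s^{-\alpha}$, and hence
\[
|\xi(s)| \leqslant \kappa s^{1-\alpha}.
\]
Combined with \eqref{ord:zero}, this is exactly \eqref{power:zero}. Along the way I also record the auxiliary bound $\frac{1}{s^2 f''(s)} \leqslant \alpha\kappa s^{-\alpha}$, which again follows directly from \ref{I2}.

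For \eqref{power:one}, substitute these two bounds into \eqref{ord:one}:
\[
\frac{|\xi(s)|}{s} \leqslant \kappa s^{-\alpha}, \qquad \frac{1}{s^2 f''(s)} \leqslant \alpha\kappa s^{-\alpha}, \qquad \frac{2\eta|\xi(s)|}{s^2f''(s)} \leqslant 2\eta\alpha\kappa^2 s^{1-2\alpha}.
\]
Summing yields $(1+\alpha)\kappa s^{-\alpha} + 2\eta\alpha\kappa^2 s^{1-2\alpha}$, which is \eqref{power:one}.

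For \eqref{power:two}, I first need a bound on the logarithmic derivative that appears in \eqref{ord:two}. Writing
\[
(\log(sf''(s)))' = \frac{1}{s} + \frac{f'''(s)}{f''(s)},
\]
assumption \ref{I3} gives $|(\log(sf''(s)))'| \leqslant (1+\kappa)s^{-1}$. Then \eqref{ord:two} becomes
\[
|\xi_\eta'' f'' s^2| \leqslant 2\eta|\xi_\eta'| + (1+\kappa)s^{-1} + 2\eta\kappa(1+\kappa)s^{-\alpha},
\]
where the last term uses $|\xi|\leqslant \kappa s^{1-\alpha}$. Inserting \eqref{power:one} for $|\xi_\eta'|$ gives
\[
2\eta(1+\alpha)\kappa s^{-\alpha} + 4\eta^2\alpha\kappa^2 s^{1-2\alpha}.
\]
Collecting the $s^{-\alpha}$ coefficients gives $2\eta\kappa\big[(1+\alpha)+(1+\kappa)\big] = 2\eta\kappa(2+\alpha+\kappa)$, which matches \eqref{power:two} exactly.

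There is no real obstacle here; the only care needed is bookkeeping of constants. One small point is the behaviour at $s=0$, where the right-hand sides are read as $+\infty$ or trivially true. Another is the logarithmic case of Remark \ref{remark:logxi}, which is outside Hypothesis \ref{hypothesis2} and so need not be treated.
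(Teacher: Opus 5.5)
Your proof is correct and is essentially the paper's own argument. You bound $|\xi(s)|\leqslant\kappa s^{1-\alpha}$ via \ref{I2}, substitute this together with $1/(s^2f''(s))\leqslant\alpha\kappa s^{-\alpha}$ into \eqref{ord:one}, and then combine \eqref{ord:two} with \eqref{power:one} and the bound $|(\log(sf''(s)))'|\leqslant(1+\kappa)s^{-1}$ from \ref{I3}, with constant bookkeeping (including the coefficient $2\eta\kappa(2+\alpha+\kappa)$) that matches the paper's exactly.
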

        \begin{proof}
        Firstly, recall from Definition \ref{xidefn} that $\xi$ satisfies
        \[
        \xi(s) \coloneqq -s\int_s^\infty \f{1}{y^3 f''(y)} \diff y. 
        \]
        Then, thanks to Assumption \ref{I2}, it follows that
        \[
        |\xi(s)| \leqslant \kappa\alpha s\int_s^\infty \f{1}{y^{1+\alpha}} = \kappa s^{1-\alpha}.
        \]
        Now, recalling Inequality \eqref{ord:one} from Proposition \ref{xiepstoxi}, $\xi_\eta$ satisfies
        \[
        |\xi_\eta (s)| \leqslant |\xi(s)| \leqslant \kappa s^{1-\alpha}.
        \]
        The derivation of Inequality \eqref{power:one} then follows from Inequality \eqref{ord:one} which may be bounded from above using Inequality \eqref{power:zero} and Assumption \ref{I2}. 
        \begin{align*}
        |\xi'_\eta(s)| & \leqslant  \frac{|\xi(s)|}{s} + \frac{1}{s^2f''(s)} + \frac{2\eta|\xi(s)|}{s^2f''(s)} \\
        & \leqslant \kappa s^{-\alpha} + \alpha \kappa s^{-\alpha} + 2\eta \alpha \kappa^2 s^{1-2\alpha}.
        \end{align*}
        Lastly, we treat Inequality \eqref{power:two}.
        Recall Inequality \eqref{ord:two}.
        Inequality \eqref{power:one} is used as an upper bound for the quantity $|\xi_\eta'|$ on the right hand-side of Inequality \eqref{ord:two}, meanwhile, Assumption \ref{I3} is used to find an upper bound for the logarithmic derivative.
        In particular, it follows that
        \begin{align*}
           |s^2\xi''_\eta(s)f''(s)| & \leqslant 2\eta |\xi_\eta'(s)|  + (1+2\eta |\xi_\eta(s)|)\left(\left|\log(f''(s))'\right| + \frac{1}{s}\right)
           \\
           & \leqslant 2\eta |\xi_\eta'(s)| + \left(\frac{1}{s}+2\eta \kappa s^{-\alpha}\right)(\kappa+1) 
           \\
           & \leqslant 
            (\kappa+1)s^{-1} + 2\eta \kappa(2+ \alpha + \kappa)s^{-\alpha} + 4\alpha\eta^2 \kappa^2 s^{1-2\alpha}. 
        \end{align*}
        \end{proof}
\subsection{The Transformation of Variables}
Given the family of non-linearities $(\xi_\eta)_{\eta \in I}$, the ratio $r$ and, subsequently, the inhomogeneous ratios $(\phi^\eta)_{\eta \in I}$ are defined as follows.
\begin{definition}\label{defnrsigma} 
Let $(\rho_1,\rho_2)$ denote a solution of System \eqref{eq:vcdid}.
The variables $\sigma,r \co [0,T)\times\Omega \to \R$ are defined
\begin{equation}\label{eq:defnrsgima}
\sigma(t,x)\coloneqq \rho_1(t,x) + \rho_2(t,x), \quad r(t,x) \coloneqq \f{\rho_1(t,x)}{\rho_1(t,x) + \rho_2(t,x)}.  
\end{equation}
\end{definition}

Thanks to Theorem \ref{shorttime}, a solution of System \eqref{eq:vcdid} satisfies $\sigma > 0 $ on $[0,T) \times \overline{\Omega}$.
Consequently, the coordinate transformation defined in Definition \ref{defnrsigma} is smooth for $(t,x) \in [0,T) \times \overline{\Omega}$. 
This means that 
\[
(\sigma,r) \in C([0,T);C^2(\overline{\Omega}))\cap C^1([0,T);C(\overline{\Omega})).
\]

\begin{definition}\label{vdefnphi}
Given $\eta \in I$ and $(\rho_1,\rho_2)$: a classical solution of System \eqref{eq:vcdid} for which $(\sigma,r)$ denote the aggregate density and the ratio respectively (cf. Definition \ref{defnrsigma}), let the map $\omega\co[0,T]\times \Omega \to \R$ be defined
    \begin{equation}\label{eq:vdefnPhi}
    \omega(t,x) \coloneqq \exp\left(-\int_0^x \partial_s(V_1(s)- V_2(s)
) \xi_\eta(\sigma(t,s)) \diff s\right).
\end{equation}
Then, let the map $\phi^\eta \co[0,T]\times \Omega \to \R$ be defined 
\begin{equation}
    \phi^\eta(t,x)\coloneqq  \f{r(t,x)\omega(t,x)}{1-r(t,x) +r(t,x)\omega(t,x)}.
\end{equation}
\end{definition}

\begin{remark}\label{remark:logomega}
    Remark \ref{remark:logxi} highlighted that, when the pressure law is logarithmic, $\xi$ is a constant. 
    Consequent to this, it follows that, when $f$ takes the logarithmic pressure the inhomogeneity $\omega$ is independent of time, satisfying the equality 
    \[
    \omega(t,x) = \omega(x) \coloneqq \exp(\lambda(V_1(x)-V_2(x))).
    \]
\end{remark}

\begin{proposition}\label{transform}
    Let $(\rho_1,\rho_2)$ denote a solution of System \eqref{eq:vcdid}.
    Then $(\sigma, r)$ defines a classical solution of the system
    \begin{align}
        &\begin{cases}
            \partial_t\sigma = \partial_x (\sigma(\eta\partial_x\log(\sigma)+ \partial_xf'(\sigma)+ \bs{v}_\sigma)), \hfill {\rm\ in\ } (0,T) \times \Omega. \\
            \partial_t r = \eta \partial_{xx}r + (2\eta\partial_x\log(\sigma)+\partial_xf'(\sigma) +\bs{v}_r)\partial_xr  + \bs{w}_rr(1-r),
        \end{cases} \label{eq:rsigmaevolution}\\
        &\begin{cases}
            0 = \sigma(\eta\partial_x\log(\sigma) + \partial_xf'(\sigma) + \bs{v}_\sigma),\\
            0 = \partial_x r, \hfill   \text{ on } (0,T) \times \partial\Omega,
        \end{cases}
      \label{eq:rsigmabc}
    \end{align}
    where the coefficient functions $\bs{v}_\sigma, \bs{v}_r, \bs{w}_r\co[0,T) \times \overline{\Omega}\to \R$ are defined as follows 
    \begin{align}
        \bs{v}_\sigma &\coloneqq r\partial_xV_1+(1-r)\partial_xV_2, \label{eq:vsigma} \\
        \bs{v}_r &\coloneqq r\partial_xV_2 + (1-r)\partial_xV_1,  \label{eq:vr} \\
        \bs{w}_r &\coloneqq \partial_{xx}(V_1-V_2) + \partial_x\log(\sigma)\partial_x(V_1-V_2). \label{eq:wr}
    \end{align}
\end{proposition}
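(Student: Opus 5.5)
The plan is to pass from $(\rho_1,\rho_2)$ to $(\sigma,r)$ through the fluxes, and then read off both the interior equations and the boundary conditions. By Theorem \ref{classical} and the positivity $\sigma>0$ on $[0,T)\times\overline\Omega$, the pair $(\sigma,r)$ has the regularity stated after Definition \ref{defnrsigma}. Hence every manipulation below, including division by $\sigma$ and the product and chain rules, is classical and holds pointwise.

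\emph{Step 1: the equation for $\sigma$.} For each $i$ I write the flux
\[
J_i \coloneqq \eta\partial_x\rho_i+\rho_i\partial_x(f'(\sigma)+V_i),
\]
so that $\partial_t\rho_i=\partial_xJ_i$. Adding the two fluxes and using $\eta\partial_x\sigma=\sigma\eta\partial_x\log\sigma$ and $\rho_1\partial_xV_1+\rho_2\partial_xV_2=\sigma\bs{v}_\sigma$ gives
\[
J_\sigma\coloneqq J_1+J_2=\sigma\bigl(\eta\partial_x\log\sigma+\partial_xf'(\sigma)+\bs{v}_\sigma\bigr).
\]
This yields the first equation of \eqref{eq:rsigmaevolution}. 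Since $J_1=J_2=0$ on $\partial\Omega$ by \eqref{eq:vcdid}, also $J_\sigma=0$ there, which is the first condition in \eqref{eq:rsigmabc}.

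\emph{Step 2: the equation for $r$.} Set $D\coloneqq\partial_x(V_1-V_2)$. Substituting $\rho_1=r\sigma$ and using $\partial_xV_1-\bs{v}_\sigma=(1-r)D$, I rewrite the first flux as
\[
J_1=rJ_\sigma+\eta\sigma\partial_xr+\sigma r(1-r)D .
\]
Then I compare $\partial_t\rho_1=r\partial_t\sigma+\sigma\partial_tr$ with $\partial_xJ_1$. The term $r\partial_xJ_\sigma$ cancels against $r\partial_t\sigma$, and after dividing by $\sigma$ I obtain
\[
\partial_tr=\frac{J_\sigma}{\sigma}\partial_xr+\eta\partial_{xx}r+\eta\partial_x\log\sigma\,\partial_xr+\frac1\sigma\partial_x\bigl(\sigma r(1-r)D\bigr).
\]
Expanding the last term gives
\[
r(1-r)\bigl(\partial_xD+\partial_x\log\sigma\, D\bigr)+(1-2r)D\,\partial_xr ,
\]
and its zero-order part is exactly $\bs{w}_r\,r(1-r)$. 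Collecting the coefficients of $\partial_xr$ gives
\[
2\eta\partial_x\log\sigma+\partial_xf'(\sigma)+\bs{v}_\sigma+(1-2r)D .
\]
The only algebraic check is the identity $\bs{v}_\sigma+(1-2r)D=\bs{v}_r$, which follows by expanding both sides in $\partial_xV_1$ and $\partial_xV_2$. This bookkeeping is the main place where an error could creep in; the rest is mechanical.

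\emph{Step 3: the boundary condition for $r$.} On $\partial\Omega$ we have $J_1=0$ and $J_\sigma=0$, so the decomposition of $J_1$ reduces to
\[
\eta\sigma\partial_xr+\sigma r(1-r)D=0 .
\]
Assumption \ref{J5}, which is tacitly assumed for classical solutions, gives $D=0$ on $\partial\Omega$. Since $\sigma>0$ and $\eta>0$, it follows that $\partial_xr=0$ on $\partial\Omega$. This completes the verification of \eqref{eq:rsigmaevolution} and \eqref{eq:rsigmabc}.
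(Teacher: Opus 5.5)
Your proof is correct and takes essentially the same route as the paper. The paper also computes $\partial_t r = (\partial_t\rho_1 - r\partial_t\sigma)/\sigma$ and expands it. It obtains the boundary condition by subtracting $r$ times the $\sigma$ no-flux condition from the $\rho_1$ no-flux condition, which is exactly your identity $J_1 = rJ_\sigma + \eta\sigma\partial_x r + \sigma r(1-r)D$ evaluated on $\partial\Omega$. Organising the interior computation around this flux decomposition is only a tidier bookkeeping (the paper splits off $V_2$ rather than $\bs{v}_\sigma$), not a different argument.
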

\begin{proof}
To derive the equation governing $\sigma$, sum the equations of System \eqref{eq:vcdid} and then rewrite the subsequent boundary value problem in the variables $r,\sigma$.

\medskip 

To derive the evolution equation for $r$, observe that, $\sigma$ is strictly positive on $[0,T) \times \overline{\Omega}$ due to Proposition \ref{shorttime}.
Consequently, the following equality holds due to the product rule.
\begin{equation}\label{eq:dtr}
   \partial_t r = \f{\partial_t(r\sigma)-r\partial_t\sigma}{\sigma} = \f{\partial_t\rho_1 - r\partial_t\sigma}{\sigma}.
\end{equation}
Further, recall that $\rho_1,\rho_2$ solve System \eqref{eq:vcdid} and hence, $\rho_1$ and $\sigma$ each satisfy the respective equalities

\begin{align}
   \partial_t\rho_1 & = \partial_x(r\sigma \partial_x(V_1-V_2)) + \partial_x(r\sigma \partial_x(f'(\sigma) + V_2)) + \eta\partial_{xx}(r\sigma), \label{eq:dtrho1}\\
    \partial_t\sigma & = \partial_x(r\sigma \partial_x(V_1-V_2)) + \partial_x(\sigma \partial_x(f'(\sigma) + V_2)) + \eta\partial_{xx}\sigma.\label{eq:dtsigma} 
\end{align}
Substituting Equalities \eqref{eq:dtrho1} and \eqref{eq:dtsigma} within Equation \eqref{eq:dtr} then yields the following equality.
\begin{equation}\label{eq:dtr2}
\begin{split}
    \partial_t r & = \f{1-r}{\sigma}\partial_x(r\sigma \partial_x(V_1-V_2)) +\partial_x r \cdot\partial_x\left(f'(\sigma)+ V_2\right) \\
    & + \eta \partial_{xx} r + 2\eta \partial_x\log(\sigma) \partial_x r.
\end{split}
\end{equation}
Moreover, by expanding each derivative on the right hand-side of Equality \eqref{eq:dtr2} and factorising together terms which are of the same order in r, the desired equality is derived.
\begin{equation}\label{eq:logformaldtrho3}
\begin{split}
    \partial_t r & = \eta \partial_{xx}r + (2\eta\partial_x\log(\sigma)+\partial_xf'(\sigma) +r\partial_xV_2  + (1-r)\partial_xV_1)\partial_xr\\
    & +  (\partial_{xx}(V_1-V_2) + \partial_x\log(\sigma)\partial_x(V_1-V_2))r(1-r).
\end{split}
\end{equation}
To derive the boundary condition for $r$, consider that the following boundary conditions are satisfied on $(0,T)\times \partial\Omega$ by $\rho_1$ and $\sigma$ respectively
\begin{align}
   0 &= r\sigma \partial_x(V_1-V_2) + r\sigma \partial_x(f'(\sigma) + V_2)+ \eta\partial_x (r\sigma), \label{eq:bcrho1}\\
   0 &= r\sigma \partial_x(V_1-V_2) + \sigma \partial_x(f'(\sigma) + V_2) + \eta\partial_{x}\sigma.\label{eq:bcsigma} 
\end{align}
Multiply Equality \eqref{eq:bcsigma} by $r$ and then subtract the resulting equality from Equation \eqref{eq:bcrho1}.
This yields the equality
\begin{equation}\label{eq:bcr}
    0 = \sigma(r(1-r)\partial_x(V_1-V_2) +\eta\partial_x r).
\end{equation}
The density of $\sigma$ is strictly positive on $[0,T)\times \overline{\Omega}$, and so it is meaningful to divide through Equation \eqref{eq:bcr} by $\sigma$. 
In addition, Assumption \ref{J5} enforces that $\partial_xV_1=\partial_xV_2$ on $\partial\Omega$ which thus yields the desired boundary condition.
\end{proof}
\subsection{The Evolution Equation}
   \begin{definition}\label{coefficients}
        Let $ \eta \in I$, let $(\rho_1,\rho_2)$ denote a classical solution to System \eqref{eq:vcdid} and let $\phi^\eta$ be as given in Definition \ref{vdefnphi}. 
        Further, recall the functions $\bs{v}_r$ and $\bs{v}_\sigma$ from Proposition \ref{transform}.

        \medskip 
        
        The functions $a,m\co[0,T]\times \Omega \to \R$ are given by
    \begin{align}
    a & \coloneqq \partial_x f'(\sigma) + \bs{v}_r, \label{eq:Af} \\
    m & \coloneqq \partial_x(V_1-V_2)(\xi_\eta(\sigma)\bs{v}_r - \sigma \xi_\eta'(\sigma)\bs{v}_\sigma) + \partial_{xx}(V_1-V_2)\notag\\
    & + \int_0^x \partial_s(\xi'_\eta(\sigma)\partial_s(V_1-V_2))(\sigma\partial_s f'(\sigma) + \sigma\bs{v}_\sigma)\diff s 
     \label{eq:Cf}
  \end{align}
   and the functions $a^\eta,b^\eta,m^\eta \co[0,T]\times \Omega \to \R$ are given by 
       \begin{align}
       a^\eta & \coloneqq a+\eta((2-3\phi)\partial_x(V_1-V_2)\xi_\eta(\sigma)+ 2\partial_x\log(\sigma)), \label{eq:Afeta} \\
       b^\eta & \coloneqq m^\eta \phi^\eta(1-\phi^\eta), \label{eq:Bfeta}\\
        m^\eta & \coloneqq m + \eta(\partial_{xx}(V_1-V_2)\xi_\eta(\sigma) + (1-\phi^\eta)\left(\partial_x(V_1-V_2)\xi_\eta(\sigma)\right)^2)\notag\\  
    & +  \eta\int_0^x \partial_s(\xi'_\eta(\sigma)\partial_s(V_1-V_2))\partial_s\sigma \diff s .  \label{eq:Cfeta}  
        \end{align}
        \end{definition}
\begin{proposition}\label{phievolution}
    Let $ \eta \in I$, let $(\rho_1,\rho_2)$ denote a classical solution to System \eqref{eq:vcdid} and let $\phi^\eta$ be as given in Definition \ref{vdefnphi}. 
    Then, then the variable $\phi^\eta$ satisfies 
    \[
    \phi^\eta \in C([0,T];C^3(\overline{\Omega})) \cap C^1([0,T];C^1(\overline{\Omega})).
    \]
    Then, $\phi^\eta$
    satisfies the System
    \begin{equation}\label{eq:phievolution}
    \begin{cases}
    \begin{alignedat}{2}
         \partial_t \phi^\eta &= \eta\partial_{xx}\phi^\eta + a^\eta\partial_x\phi^\eta + b^\eta \hspace{3em} &&\mathrm{ in } \ (0,T) \times \Omega,\\
        \phi_0 &= \phi^\eta &&\mathrm{ on } \ \{0\} \times \overline{\Omega},\\
         \partial_x\phi &= 0 &&\mathrm{ on } \ (0,T) \times \partial\Omega.
    \end{alignedat}
    \end{cases}
    \end{equation}
  \end{proposition}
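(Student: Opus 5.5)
The plan is to work with the logit variable rather than with $\phi^\eta$ directly, since $\phi^\eta = j(\psi)$ where $j$ is the map of Equation \eqref{eq:j} and
\[
\psi \coloneqq \log\Big(\frac{r}{1-r}\Big) + \log\omega, \qquad \log\omega(t,x) = -\int_0^x \partial_s(V_1-V_2)\,\xi_\eta(\sigma(t,s))\diff s .
\]
For segregated data $r$ may touch $0$ or $1$, so I would not use the logit in the final argument. Instead I would compute directly on $\phi^\eta = r\omega/(1-r+r\omega)$, which is a smooth function of $(r,\omega)$ because $1-r+r\omega\geqslant \min(1,\omega)>0$. The logit computation serves only as a bookkeeping guide: an identity between polynomial expressions in $(r,\phi^\eta)$ that holds where $r\in(0,1)$ extends by continuity.

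\textbf{Regularity.} By Theorem \ref{classical} and $\sigma>0$, we have $\sigma,r\in C([0,T];C^3(\overline\Omega))\cap C^1([0,T];C^1(\overline\Omega))$. Since $\xi_\eta\in C^2$ on $(0,\infty)$ (via the ODE \eqref{eq:ode} and \ref{K4}) and $V_1-V_2\in C^{3,1}$, the integrand $\partial_s(V_1-V_2)\xi_\eta(\sigma)$ lies in $C([0,T];C^2)$. Its primitive is therefore $C^3$ in $x$, and it is $C^1$ in time because $\partial_t\xi_\eta(\sigma)=\xi_\eta'(\sigma)\partial_t\sigma$ is continuous. Composing with smooth maps then gives the stated regularity of $\phi^\eta$.

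\textbf{Boundary condition.} Write $\partial_x\phi^\eta = \phi^\eta(1-\phi^\eta)\bigl(\partial_x r/(r(1-r)) + \partial_x\log\omega\bigr)$, or more safely $\partial_x\phi^\eta = \frac{\omega\,\partial_x r + r(1-r)\partial_x\omega}{(1-r+r\omega)^2}$. On $\partial\Omega$, $\partial_x r=0$ by \eqref{eq:rsigmabc}. Also $\partial_x\omega = -\omega\,\partial_x(V_1-V_2)\xi_\eta(\sigma)$, which vanishes by \ref{J5}. Hence $\partial_x\phi^\eta=0$ there.

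\textbf{Evolution equation.} I would compute
\[
\partial_t\phi^\eta = \frac{\omega\partial_t r + r(1-r)\partial_t\omega}{(1-r+r\omega)^2},
\]
inserting the $r$-equation of Proposition \ref{transform} and
\[
\partial_t\log\omega = -\int_0^x \partial_s(V_1-V_2)\,\xi_\eta'(\sigma)\,\partial_t\sigma\diff s .
\]
In the last expression I would substitute $\partial_t\sigma=\partial_s F$ with flux $F=\eta\partial_s\sigma+\sigma\partial_s f'(\sigma)+\sigma\bs v_\sigma$, and integrate by parts. The boundary term at $0$ vanishes by the no-flux condition, leaving
\[
-\partial_x(V_1-V_2)\xi_\eta'(\sigma)F + \int_0^x \partial_s\bigl(\xi_\eta'(\sigma)\partial_s(V_1-V_2)\bigr)F\diff s .
\]
This produces exactly the integral terms appearing in $m$ and $m^\eta$.

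On the spatial side, I would express $\eta\partial_{xx}\phi^\eta$ through $\partial_{xx}r$, $(\partial_x r)^2$, $\partial_x\omega$ and $\partial_{xx}\omega$, and solve it for $\eta\partial_{xx}r$. Likewise $\partial_x r$ is replaced by $\partial_x\phi^\eta$ minus the $\omega$-contribution. After this substitution, the terms that are linear in $\partial_x\phi^\eta$ collect into $a^\eta$. The quadratic term $(\partial_x r)^2$ from the second derivative of the Möbius map generates the $-3\phi\,\eta\,\partial_x(V_1-V_2)\xi_\eta$ correction, and everything else must collapse to $m^\eta\phi^\eta(1-\phi^\eta)$.

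The key cancellation uses the ODE \eqref{eq:ode}. The terms involving $\partial_x\sigma$ — namely $\partial_x\log\sigma\,\partial_x(V_1-V_2)$ coming from $\bs w_r$, and $\sigma\xi_\eta'(\sigma)\partial_x f'(\sigma)=\xi_\eta'\sigma f''\partial_x\sigma$ and $\eta\xi_\eta'\partial_x\sigma$ coming from the flux — together with $\xi_\eta(\sigma)\partial_x f'(\sigma)$ from the drift, combine into
\[
\partial_x(V_1-V_2)\,\partial_x\log\sigma\,\bigl(1-\xi_\eta'\sigma^2f''+\xi_\eta(\sigma f''+2\eta)\bigr),
\]
which vanishes by \eqref{eq:ode}. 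This is precisely why $\xi_\eta$ was chosen, and why the $2\eta$ appears in it.

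I expect this bookkeeping to be the main obstacle: matching every $\eta$-dependent remainder to the stated $a^\eta$ and $m^\eta$, including the $(1-\phi^\eta)(\partial_x(V_1-V_2)\xi_\eta)^2$ term, without sign errors. I would organise the computation by first checking the case $\eta=0$ (recovering $a$ and $m$), and then tracking only the $\eta$-terms separately. The initial condition holds by definition.
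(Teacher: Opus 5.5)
Your regularity argument and your check of $\partial_x\phi^\eta=0$ on $\partial\Omega$ (via $\partial_x r=0$ and \ref{J5}, which the paper leaves implicit) are fine. Your route is also the paper's: chain rule in $(r,\omega)$, integrating $\partial_t\log\omega$ by parts against the $\sigma$-flux, removing the $\partial_x\sigma$-terms with \eqref{eq:ode}, and trading $\eta\partial_{xx}r$ for $\eta\partial_{xx}\phi^\eta$.

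The gap is the last step, which you assert rather than carry out. The leftover terms do \emph{not} collapse to the $a^\eta$, $m^\eta$ of Definition \ref{coefficients}. Write $M(r,\omega)=r\omega/(1-r+r\omega)$, so that $M_r=\omega(1-r+r\omega)^{-2}$ and $M_{rr}=-2\omega(\omega-1)(1-r+r\omega)^{-3}$. Solving $\eta\partial_{xx}\phi^\eta$ for $\eta M_r\partial_{xx}r$ leaves $-\eta M_{rr}(\partial_x r)^2$. After substituting $\partial_x r=(\partial_x\phi^\eta-M_\omega\partial_x\omega)/M_r$, this contains the genuinely quadratic term $2\eta\omega^{-1}(\omega-1)(1-r+r\omega)(\partial_x\phi^\eta)^2$, which nothing else cancels.

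Concretely, take a point where $\partial_x(V_1-V_2)=\partial_{xx}(V_1-V_2)=0$ but $\omega\neq1$. Such points exist generically, since $\log\omega$ is a primitive. There $\partial_x\omega=\partial_{xx}\omega=0$, $\bs w_r=0$, $a^\eta=\partial_xf'(\sigma)+\bs v_r+2\eta\partial_x\log\sigma$ and $b^\eta=\phi^\eta(1-\phi^\eta)\partial_t\log\omega$. The exact chain rule then gives
\[
\partial_t\phi^\eta-\eta\partial_{xx}\phi^\eta-a^\eta\partial_x\phi^\eta-b^\eta=\frac{2\eta\,\omega(\omega-1)}{(1-r+r\omega)^{3}}(\partial_x r)^2 ,
\]
which is nonzero whenever $\partial_x r\neq0$. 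So no bookkeeping can produce the stated coefficients for $\eta>0$. The paper reaches them only through the second equality in \eqref{eq:dxphi2}: $\omega(1-r+r\omega)^{-2}\partial_x r$ equals $\frac{\phi(1-\phi)}{r(1-r)}\partial_x r$, not $\frac{\phi}{r}\partial_x r$. The same slip enters \eqref{eq:rlong2} and \eqref{eq:dxxphi3}.

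Doing your computation correctly is short if you use the logit $L=\log(r/(1-r))$, which satisfies $\partial_t L=\eta\partial_{xx}L+\eta(1-2r)(\partial_xL)^2+(a+2\eta\partial_x\log\sigma)\partial_xL+\bs w_r$. It gives
\[
\partial_t\phi^\eta=\eta\partial_{xx}\phi^\eta+\tilde a^\eta\partial_x\phi^\eta+\tilde m^\eta\phi^\eta(1-\phi^\eta),
\]
\[
\tilde a^\eta=a+2\eta\partial_x\log\sigma+2\eta(1-2r)\partial_x(V_1-V_2)\xi_\eta(\sigma)+\frac{2\eta(\omega-1)(1-r+r\omega)}{\omega}\partial_x\phi^\eta,
\]
with $\tilde m^\eta$ equal to $m^\eta$ after replacing $(1-\phi^\eta)$ by $(1-2r)$.

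Your description of the key cancellation is also off by one term. The flux contribution $\eta\xi_\eta'(\sigma)\partial_x\sigma\,\partial_x(V_1-V_2)$ is not killed by \eqref{eq:ode}, and your displayed bracket does not contain it. It cancels against the $\eta\partial_{xx}\log\omega$ part of $\eta\partial_{xx}\phi^\eta$. The $2\eta\xi_\eta$ in your bracket comes instead from $2\eta\partial_x\log\sigma\,\partial_x r$.

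The corrected equation keeps the structure Section \ref{sec:BV} actually uses. Since $\tilde a^\eta\in C([0,T];C^1(\overline\Omega))$, $\sgn(\partial_x\phi^\eta)\partial_x(\tilde a^\eta\partial_x\phi^\eta)$ is still a total derivative. The extra $\partial_x r$ from differentiating $(1-2r)$ is absorbed via Lemma \ref{phir}. So your strategy is sound, but the proposition has to be stated and verified with the corrected coefficients.
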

  \begin{proof}
       Consider $\eta$ fixed and denote $\phi^\eta = \phi$, dropping the super-script for ease of exposition.
         Firstly, assuming that the potentials satisfy Assumption \ref{K1} and that $\sigma$ is strictly positive on $[0,T]\times\overline{\Omega}$. Then, the coordinate transformation 
        \[
        (\rho_1,\rho_2)\mapsto (\sigma, \phi) 
        \]
        is $C^{3,1}$ smooth. Hence, 
        \[
        \phi \in C([0,T];C^3(\overline{\Omega})) \cap C^1([0,T];C^1(\overline{\Omega}))
        \] 
        thanks to the regularity established in Theorem \ref{classical}.

        \medskip 
        
        Subsequently, the variable $\phi$ satisfies
\begin{equation}\label{eq:dtphi1}
     \partial_t \phi = \f{\omega\partial_tr + r(1-r)\omega \partial_t\log(\omega)}{((1-r)+r\omega)^2}.
\end{equation}
We then recall here the evolution equation for $r$ from System \eqref{eq:rsigmaevolution}.
In particular, $r$ satisfies the following equality on $(0,T)\times \Omega$.
\begin{equation}\label{eq:revo}
\begin{split}
         \partial_t r & = \eta \partial_{xx}r + (2\eta\partial_x\log(\sigma) + \partial_xf'(\sigma) + \bs{v}_r) \partial_x r \\
         & + (\partial_{xx}(V_1-V_2) + \partial_x\log(\sigma)\partial_x(V_1-V_2))r(1-r). 
\end{split}
\end{equation}
To re-factorise Equation \eqref{eq:revo}, recognise the equality 
\begin{equation}\label{eq:refactorise}
    \partial_x\log(\sigma)= \f{1}{2\eta + \sigma f''(\sigma)} \cdot (2\eta\partial_x\log(\sigma) + \partial_xf'(\sigma)).
\end{equation}
Consequently for any function $j \in C(0,+\infty)$, $r$ satisfies the evolution equation 
\begin{equation}\label{eq:revo2}
\begin{split}
         & \partial_t r  = \eta \partial_{xx}r \\
         & + (2\eta\partial_x\log(\sigma) + \partial_xf'(\sigma) + \bs{v}_r)\cdot\left(\partial_x r +\partial_x(V_1-V_2) \cdot  \f{ r(1-r)j(\sigma)}{2\eta + \sigma f''(\sigma)} \right)\\
         & + r(1-r)\cdot\left(\partial_{xx}(V_1-V_2) - \partial_x(V_1-V_2) \cdot  \f{\bs{v}_r j(\sigma)}{2\eta + \sigma f''(\sigma)}\right) \\
         & +r(1-r)(1-j(\sigma))\partial_x\log(\sigma)\partial_x(V_1-V_2). 
\end{split}
\end{equation}
For now, the function $j$ is not fixed, but will later be chosen in according to the non-linearity $\xi_\eta$.
We now derive the evolution equation for $\partial_t\log(\omega)$ which, in turn, depends on the evolution $\partial_t \xi_\eta(\sigma)$.
Recall the evolution of $\sigma$ from System \eqref{eq:rsigmaevolution}.
Then, it follows from the chain rule that
\begin{equation}\label{eq:xievo}
\begin{split}
    \partial_t\xi_\eta(\sigma) & = \xi'_\eta(\sigma)(\eta +\sigma f''(\sigma))\partial_{xx}\sigma 
    \\
    & + \xi'_\eta(\sigma)\partial_\sigma(\sigma f''(\sigma) )|\partial_x\sigma|^2 +\xi'_\eta(\sigma)\partial_x(\sigma\bs{v}_\sigma).
\end{split}
\end{equation}
Consequently, by applying the Leibniz rule to facilitate the interchange of differentiation and integration, it follows that
\begin{equation}\label{eq:omegaevo}
\begin{split}
    \partial_t \log(\omega) = & -\int_0^x\partial_t\xi_\eta(\sigma)\partial_s(V_1-V_2)\diff s \\
    = & - \int_0^x\xi'_\eta(\sigma)(\eta +\sigma f''(\sigma))\partial_{ss}\sigma\partial_s(V_1-V_2) \diff s
    \\
    & - \int_0^x\xi'_\eta(\sigma)\partial_\sigma(\sigma f''(\sigma) )|\partial_s\sigma|^2\partial_s(V_1-V_2) \diff s 
    \\
    & -\int_0^x \xi'_\eta(\sigma)\partial_s(\sigma\bs{v}_\sigma)\partial_s(V_1-V_2) \diff s.
\end{split}
\end{equation}
Studying the evolution of $\|\partial_x \phi^\eta\|_{L^1(\Omega)}$ will involve taking a spatial derivative of $\partial_t \omega$.
Subsequently, to remove the higher order derivatives which feature in the second and fourth line of Equality \eqref{eq:omegaevo}, we can perform integration by parts, integrating the term $\partial_{ss}\sigma$ in the second line and 
$\partial_s(\sigma\bs{v}_\sigma)$ in the fourth line.

\medskip 

In particular, since it is assumed that $\partial_s(V_1-V_2)$ vanishes at $0 \in \partial\Omega$, there is no contribution from the boundary integral when evaluated at $0$ and hence, it follows that

\begin{equation}\label{eq:omegaevo2}
\begin{split}
    \partial_t \log(\omega)  = & - (\eta\partial_x\sigma +\sigma \partial_xf'(\sigma)+ \sigma \bs{v}_\sigma)\xi_{\eta}'(\sigma)\partial_x(V_1-V_2)
    \\
    & 
    + \int_0^x\partial_s(\xi'_\eta(\sigma)(\eta +\sigma f''(\sigma))\partial_s(V_1-V_2))\partial_{s}\sigma \diff s
    \\
    & - \int_0^x\xi'_\eta(\sigma)\partial_\sigma(\sigma f''(\sigma) )|\partial_s\sigma|^2\partial_s(V_1-V_2) \diff s 
    \\
    & + \int_0^x \partial_s(\xi'_\eta(\sigma)\partial_s(V_1-V_2))\sigma\bs{v}_\sigma \diff s.
\end{split}
\end{equation}
The expression further simplifies since 
\begin{align*}
 & \partial_x(\xi'_\eta(\sigma)(\eta+\sigma f''(\sigma))\partial_x(V_1-V_2))\partial_x\sigma \\
 & - \xi'_\eta(\sigma)\partial_\sigma(\sigma f''(\sigma) )|\partial_x\sigma|^2\partial_x(V_1-V_2)\\
    & = \partial_x(\xi'_\eta(\sigma)\partial_x(V_1-V_2))(\eta\partial_x\sigma+\sigma \partial_xf'(\sigma)).
\end{align*}
Subsequently, $\partial_t\log(\omega)$ satisfies the equality
\begin{equation}\label{eq:omegaevo3}
\begin{split}
    \partial_t \log(\omega)  = & - (\eta\partial_x\sigma +\sigma \partial_xf'(\sigma)+ \sigma \bs{v}_\sigma)\xi_{\eta}'(\sigma)\partial_x(V_1-V_2)
    \\
    & 
    + \int_0^x \partial_s(\xi'_\eta(\sigma)\partial_s(V_1-V_2))(\eta\partial_s\sigma+\sigma \partial_sf'(\sigma)) \diff s
    \\
    & + \int_0^x \partial_s(\xi'_\eta(\sigma)\partial_s(V_1-V_2))\sigma\bs{v}_\sigma \diff s.
\end{split}
\end{equation}
Now, observe that the last line of Equality \eqref{eq:revo2} features a term of the form 
\begin{equation}\label{eq:amalgam1}
    r(1-r)(1-j(\sigma))\partial_x\log(\sigma)\partial_x(V_1-V_2)
\end{equation}
whilst the first line of Equality \eqref{eq:omegaevo3} features a term of the form 
\begin{equation}\label{eq:amalgam2}
    -\xi_{\eta}'(\sigma)\sigma \partial_xf'(\sigma)\partial_x(V_1-V_2).
\end{equation}
In particular, if $j(\sigma)$ is chosen to be the continuous function given by  
\[
1- \xi_\eta'(\sigma) f''(\sigma)\sigma^2 = j(\sigma)
\]
then the summation of Term \eqref{eq:amalgam1} with $r(1-r)$ multiplied by Term \eqref{eq:amalgam2} yields the equality
\begin{equation}\label{eq:cancel}
     r(1-r)\partial_x(V_1-V_2)((1-j(\sigma))\partial_x\log(\sigma)- \xi_{\eta}'(\sigma)\sigma \partial_xf'(\sigma)) = 0.
\end{equation}
Fixing this particular choice of $j$, we recall from Definition \ref{xidefn} that $\xi_\eta$ was defined to satisfy Equation \eqref{eq:ode} and consequently, $j$ satisfies the equality
\begin{equation}\label{eq:factor}
     \f{j(\sigma)}{2\eta + \sigma f''(\sigma)}  = - \xi_\eta(\sigma).
\end{equation}
Thereby, when amalgamating Equalities \eqref{eq:omegaevo3} and \eqref{eq:revo2}, we are able to cancel the two Terms \eqref{eq:amalgam1} and \eqref{eq:amalgam2} thanks to Equality \eqref{eq:cancel} and also re-write the term featured in the second factor of the second line of Equality \eqref{eq:revo2} in terms of $\xi_\eta$ thanks to Equality \eqref{eq:factor}.
In particular, it follows that 
\[
\partial_t r + r(1-r)\partial_t\log(\omega)
\]
satisfies the equality
\begin{equation}\label{eq:rlong}
\begin{split}
    & \partial_t r +r(1-r)\partial_t\log(\omega) = \eta \partial_{xx}r \\
         & + (2\eta\partial_x\log(\sigma) + \partial_xf'(\sigma) + \bs{v}_r)\cdot\left(\partial_x r-\partial_x(V_1-V_2) r(1-r)\xi_\eta(\sigma) \right)\\
         & + r(1-r)\cdot\left(\partial_{xx}(V_1-V_2) + \partial_x(V_1-V_2)\bs{v}_r \xi_\eta(\sigma)\right) \\
         & - r(1-r) \sigma\xi_{\eta}'(\sigma)\bs{v}_\sigma\partial_x(V_1-V_2) - \eta r(1-r)\partial_x\xi_\eta(\sigma)\partial_x(V_1-V_2)
    \\
    & 
    + r(1-r)\int_0^x \partial_s(\xi'_\eta(\sigma)\partial_s(V_1-V_2))(\eta\partial_s\sigma+\sigma \partial_sf'(\sigma)) \diff s \\
    & + r(1-r)\int_0^x \partial_s(\xi'_\eta(\sigma)\partial_s(V_1-V_2))\sigma\bs{v}_\sigma \diff s.
\end{split}
\end{equation}
Now, $\phi$ satisfies both of the equalities 
\begin{align}
    \phi(1-\phi) & = \f{r(1-r)\omega}{((1-r)+r\omega)^2},\label{eq:phiphi2} \\
    \partial_x \phi & = \frac{\omega\left(\partial_xr - r(1-r)\partial_x(V_1-V_2)\xi_\eta(\sigma)\right)}{((1-r)+r\omega)^2}\notag\\
    & = \f{\phi}{r} \partial_x r - \phi(1-\phi) \partial_x(V_1-V_2)\xi_\eta(\sigma) \label{eq:dxphi2}
\end{align}
in addition to Equation \eqref{eq:dtphi1}.
Consequently, if both sides of Equality \eqref{eq:rlong} are multiplied by a factor of
\[
\f{\omega}{((1-r)+r\omega)^2},
\]
then it follows from the chain rule that 
\begin{equation}\label{eq:rlong2}
\begin{split}
    & \partial_t \phi = \eta \frac{\phi}{r}\partial_{xx}r - \eta\phi(1-\phi) \partial_x\xi_\eta(\sigma)\partial_x(V_1-V_2)\\
         & + (2\eta\partial_x\log(\sigma) + \partial_xf'(\sigma) + \bs{v}_r)\partial_x\phi \\
         & + \phi(1-\phi)\cdot\left(\partial_{xx}(V_1-V_2) + \partial_x(V_1-V_2)\bs{v}_r \xi_\eta(\sigma)\right) \\
         & -  \phi(1-\phi)\sigma\xi_{\eta}'(\sigma) \bs{v}_\sigma \partial_x(V_1-V_2)
    \\
    & 
    +  \phi(1-\phi)\int_0^x \partial_s(\xi'_\eta(\sigma)\partial_s(V_1-V_2))(\eta\partial_s\sigma+\sigma \partial_sf'(\sigma)) \diff s \\
    & +  \phi(1-\phi)\int_0^x \partial_s(\xi'_\eta(\sigma)\partial_s(V_1-V_2))\sigma\bs{v}_\sigma \diff s.
\end{split}
\end{equation}
By considering the spatial differentiation of Equality \eqref{eq:dxphi2}, we now address the second order term in Equation \eqref{eq:rlong}.
In particular, $\partial_{xx}\phi$ satisfies the equality
\begin{equation}\label{eq:dxxphi3}
\begin{split}
     \partial_{xx}\phi & = \frac{\phi}{r}\partial_{xx}r - \phi(1-\phi) \partial_x\xi_\eta(\sigma)\partial_x(V_1-V_2) \\ 
     & + \partial_x \phi\left(\partial_x\log(r) - (1-2\phi)\partial_x(V_1-V_2)\xi_\eta(\sigma)\right) \\
    & - \phi(1-\phi)\partial_{xx}(V_1-V_2)\xi_\eta(\sigma) - \underbrace{\phi \left(\f{\partial_x r}{r}\right)^2}_{\coloneqq \mathcal{I}} 
\end{split}
\end{equation}

In particular, recalling Equality \eqref{eq:dxphi2}, the term $\mathcal{I}$ satisfies
\begin{equation}\label{eq:I2}
\begin{split}
    \mathcal{I} & = \partial_x \phi\partial_x\log(r) + \phi\f{\partial_xr}{r}(1-\phi)\partial_x(V_1-V_2)\xi_\eta(\sigma) \\
    & = \partial_x \phi(\partial_x\log(r) + (1-\phi)\xi_\eta(\sigma)\partial_x(V_1-V_2)) \\
    & +\phi(1-\phi)\cdot (1-\phi) \left(\partial_x(V_1-V_2)\xi_\eta(\sigma)\right)^2.
\end{split}
\end{equation}
Consequently, by substituting Equality \eqref{eq:I2} within Equation \eqref{eq:dxxphi3}, it follows that the second derivative satisfies the equality
\begin{equation}\label{eq:dxxphi4}
\begin{split}
     \partial_{xx}\phi & = \frac{\phi}{r}\partial_{xx}r - \phi(1-\phi) \partial_x\xi_\eta(\sigma)\partial_x(V_1-V_2) \\ 
     & - \left((2-3\phi)\partial_x(V_1-V_2)\xi_\eta(\sigma)\right) \partial_x \phi\\
    & - \phi(1-\phi)(\partial_{xx}(V_1-V_2)\xi_\eta(\sigma) + (1-\phi)\left(\partial_x(V_1-V_2)\xi_\eta(\sigma)\right)^2).
\end{split}
\end{equation}
Substituting Equality \eqref{eq:dxxphi4} into the first line of Equation \eqref{eq:rlong2}, it follows that the evolution of $\phi$ satisfies the equality

\begin{equation}\label{eq:rlong3}
\begin{split}
    & \partial_t \phi = \eta \partial_{xx}\phi + (\partial_xf'(\sigma) + \bs{v}_r)\partial_x\phi \\
         & +\eta \left((2-3\phi)\partial_x(V_1-V_2)\xi_\eta(\sigma) + 2\partial_x\log(\sigma)\right)\partial_x \phi\\
         & + \phi(1-\phi)\cdot\left(\partial_{xx}(V_1-V_2) + \partial_x(V_1-V_2)(\bs{v}_r \xi_\eta(\sigma)- \bs{v}_\sigma \sigma\xi_\eta'(\sigma))\right) \\
    & 
    +  \phi(1-\phi)\int_0^x \partial_s(\xi'_\eta(\sigma)\partial_s(V_1-V_2))(\sigma\bs{v}_\sigma + \sigma \partial_sf'(\sigma)) \diff s \\
    & +\eta \phi(1-\phi)(\partial_{xx}(V_1-V_2)\xi_\eta(\sigma) + (1-\phi)\left(\partial_x(V_1-V_2)\xi_\eta(\sigma)\right)^2)\\
    & 
    +  \eta\phi(1-\phi)\int_0^x \partial_s(\xi'_\eta(\sigma)\partial_s(V_1-V_2))\partial_s\sigma \diff s .
\end{split}
\end{equation}
Lastly, recalling Equalities \eqref{eq:Afeta}, \eqref{eq:Bfeta} and \eqref{eq:Cfeta} satisfied by the coefficients $a^\eta, b^\eta,m^\eta$, Equation \eqref{eq:rlong3} must coincide with the equality
\begin{align*}
    \partial_t\phi & = \eta\partial_{xx}\phi + a^\eta\partial_x\phi + m^\eta \phi(1-\phi)\\
    & =  \eta\partial_{xx}\phi + a^\eta\partial_x\phi + b^\eta.
\end{align*}
  \end{proof}
\subsection{An Inequality Between Ratios}
Since $\phi^\eta$ satisfies Equation \eqref{eq:phievolution}, Property \ref{B1} is satisfied.
Here, it is further established that the pair $(r,\phi^\eta)$ satisfies Property \ref{B3} for suitably smooth drifts.
\begin{lemma}\label{phir}
    Let $r,\phi^\eta$ be as in Definitions \ref{defnrsigma} and \ref{vdefnphi} respectively. 
    Then, there exist $\Lambda_1,\Lambda_2 > 0$, whose size depends only on the Lipschitz constant of $V_1-V_2$, such that the following inequalities hold for each $t \in [0,T]$.
     \begin{align}
     \|\partial_x \phi^\eta(t)\|_{L^1(\Omega)} 
     &\leqslant \Lambda_1(\|\partial_x r(t)\|_{L^1(\Omega)}+1), \label{eq:phir2}\\
    \|\partial_x r(t)\|_{L^1(\Omega)} 
    &\leqslant \Lambda_2(\|\partial_x \phi^\eta(t) \|_{L^1(\Omega)}+1 ).\label{eq:rphi2}
    \end{align}
    By taking the supremum of the above inequalities over $t \in [0,T]$, it then follows that
    \begin{align*}
     \|\partial_x \phi^\eta\|_{L^\infty([0,T];L^1(\Omega))} 
     &\leqslant \Lambda_1(\|\partial_x r\|_{L^\infty([0,T];L^1(\Omega))}+1), \\
    \|\partial_x r\|_{L^\infty([0,T];L^1(\Omega))}  
      &\leqslant  \Lambda_2(\|\partial_x \phi^\eta \|_{L^\infty([0,T];L^1(\Omega))}+1 ).
    \end{align*}
\end{lemma}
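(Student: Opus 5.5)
The idea is to use the explicit formula \eqref{eq:dxphi2}, which expresses $\partial_x\phi^\eta$ through $\partial_x r$ plus a zero-order term, and to control the multiplicative factors by bounds on $\omega$. Write $\omega = \exp(-\Psi)$ with
\[
\Psi(t,x) \coloneqq \int_0^x \partial_s(V_1-V_2)\xi_\eta(\sigma(t,s))\diff s,
\qquad \partial_x\Psi = \partial_x(V_1-V_2)\xi_\eta(\sigma).
\]
Set $D\coloneqq (1-r)+r\omega$. The first line of \eqref{eq:dxphi2} gives
\[
\partial_x\phi^\eta = \frac{\omega}{D^2}\,\partial_x r - \phi^\eta(1-\phi^\eta)\,\partial_x\Psi .
\]
Since $D$ is a convex combination of $1$ and $\omega$, we have $\min(1,\omega)\le D\le \max(1,\omega)$. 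Hence
\[
\frac{\omega}{D^2}\le \max(\omega,\omega^{-1}) \le \exp(\|\Psi\|_{L^\infty}).
\]
Moreover $0\le \phi^\eta\le 1$, so $\phi^\eta(1-\phi^\eta)\le 1/4$. Integrating over $\Omega$ gives
\[
\|\partial_x\phi^\eta\|_{L^1}\le e^{\|\Psi\|_\infty}\|\partial_x r\|_{L^1} + \tfrac14\|\partial_x\Psi\|_{L^1}.
\]
For the reverse inequality, solve for $\partial_x r$:
\[
\partial_x r = \frac{D^2}{\omega}\big(\partial_x\phi^\eta + \phi^\eta(1-\phi^\eta)\partial_x\Psi\big),
\qquad
\frac{D^2}{\omega}\le \max(\omega,\omega^{-1}).
\]
This yields \eqref{eq:rphi2} with the same type of constant. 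Alternatively, one can invert $\phi=j(\log(r/(1-r))-\Psi)$, i.e.\ $r = \phi\omega^{-1}/(1-\phi+\phi\omega^{-1})$, which is the same formula with $\omega$ replaced by $\omega^{-1}$. The statement for the $L^\infty_t$ norms then follows by taking suprema in $t$.

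The main obstacle is to bound $\|\Psi(t)\|_{L^\infty}\le\|\partial_x\Psi(t)\|_{L^1}=\int_\Omega|\partial_x(V_1-V_2)||\xi_\eta(\sigma)|\diff x$ uniformly in $t$ and $\eta$, by a constant depending only on $\mathrm{Lip}(V_1-V_2)$ (together with $L$ and the structural constants of $f$). I would proceed in three steps:
\begin{itemize}
\item[(i)] Use \eqref{ord:zero}, or \eqref{power:zero} under Hypothesis~\ref{hypothesis2}, to get $|\xi_\eta(\sigma)|\le\kappa\sigma^{1-\alpha}$. In the power-law case (H1) the same bound is checked explicitly, and in the logarithmic case $|\xi_\eta|\le|\xi|=\lambda$ by Remark~\ref{remark:logxi}.
\item[(ii)] Since $1-\alpha\in(0,1)$, Jensen's inequality and mass conservation $\int_\Omega\sigma=2$ (from the no-flux boundary conditions) give
\[
\int_\Omega \sigma^{1-\alpha}\le L^{\alpha}2^{1-\alpha}.
\]
\item[(iii)] Conclude
\[
\|\partial_x\Psi\|_{L^1}\le \mathrm{Lip}(V_1-V_2)\,\kappa\, L^\alpha 2^{1-\alpha} =: M.
\]
\end{itemize}
This gives $\Lambda_1=\Lambda_2=\max(e^{M},M/4)$.

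The points to check carefully are that \eqref{ord:zero} indeed covers the (H1) and logarithmic cases (or treat them by direct computation), and that sublinearity of $\xi$ is what makes the bound independent of any $L^\infty$ control on $\sigma$.
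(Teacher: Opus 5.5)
Your proposal is correct and takes essentially the same route as the paper. Both arguments use the chain-rule formula for $\partial_x\phi^\eta$ in terms of $\partial_x r$ and $\partial_x\log\omega$, control $\omega^{\pm1}$ through $\|\partial_x\log\omega(t)\|_{L^1(\Omega)}$ using $\log\omega(t,0)=0$, and bound $|\xi_\eta(\sigma)|\leqslant\kappa\sigma^{1-\alpha}$ via Jensen's inequality and conservation of mass.

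Your factor $\max(\omega,\omega^{-1})$, together with the identity $\frac{D^2}{\omega}\phi^\eta(1-\phi^\eta)=r(1-r)\leqslant\frac14$, gives slightly sharper constants than the paper's $(1+\omega)^2/\omega$. You are also right that the constants depend on $\kappa$, $\alpha$ and $|\Omega|$, and not only on the Lipschitz constant of $V_1-V_2$ as the statement says.
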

\begin{proof}
Consequent to the chain rule, the variable $\phi^\eta$ satisfies
\[
    \partial_x \phi^\eta = \frac{\omega\left(\partial_xr +r(1-r)\partial_x \log(\omega)\right)}{((1-r)+r\omega)^2}.
\]
Hence, it follows that
\begin{equation}\label{eq:ineqomegaphi}
|\partial_x \phi^\eta| \leqslant c \left(|\partial_x r| +|\partial_x\log(\omega)|\right), \quad \quad 
|\partial_x r |\leqslant c |\partial_x \phi^\eta| + |\partial_x \log(\omega)|
\end{equation}
where
\[
c\coloneqq \sup_{[0,T]\times\Omega} \frac{(1+ \omega)^2}{\omega} = 2 + \sup_{[0,T]\times\Omega} \left(\omega +   \frac{1}{\omega}\right).
\]
The quantities $\omega$ and $\frac{1}{\omega}$ are bounded if $\log(\omega)$ is bounded. 
Indeed, this can be seen via the inequalities
\[
|\omega| \leqslant \exp\left(|\log(\omega)|\right) \text{ and } \left|\frac{1}{\omega}\right| \leqslant \exp\left(|\log\left(\frac{1}{\omega}\right)|\right) =  \exp\left(|\log(\omega)|\right).
\]
Further, acknowledging the inequality
\begin{align*}
   \sup_{x\in \Omega} |\log(\omega(t,x))| & \leqslant \sup_{\Omega}\int_0^x |\partial_s\log(\omega(t,s))|\diff s  + |\log(\omega(t,0))| \\
   & \leqslant \|\partial_x\log(\omega(t))\|_{L^1(\Omega)}
\end{align*}
means that the size of $c$ can be chosen to depend only on
\[
\|\partial_x \log(\omega)\|_{L^1(\Omega)}.
\]
In particular, it follows from Definition \ref{vdefnphi} that $\log(\omega(t,0)) = 0$ for every $t \in [0,T]$.

\medskip 

For the general pressure law, the following inequality is satisfied.
\begin{equation}\label{eq:omegaholder}
\begin{split}
\|& \partial_x\log(\omega)\|_{L^\infty([0,T];L^1(\Omega))} = \|\xi_\eta(\sigma)\partial_x(V_1-V_2)\|_{L^\infty([0,T];L^1(\Omega))} \\
& \leqslant \|\xi_\eta(\sigma)\|_{L^\infty([0,T];L^1(\Omega))}\|\partial_x(V_1-V_2)\|_{L^\infty(\Omega)}.
\end{split}
\end{equation}
Now, for $f$ satisfying Hypothesis \ref{hypothesis2}, we recall from Inequality \eqref{power:zero} that, when Assumption \ref{I2} is satisfied, $\xi_\eta$ satisfies 
\[
\|\xi_\eta(\sigma)\|_{L^\infty([0,T];L^1(\Omega))}\leqslant \kappa\|\sigma^{1-\alpha}\|_{L^\infty([0,T];L^1(\Omega))}.
\]
Otherwise, if $f$ satisfies Hypothesis \ref{hypothesis1}, then $\xi$ is either constant for the logarithmic pressure or obeys a similar power law type inequality.

\medskip 

Moreover, since $\rho_{1,0},\rho_{2,0} \in \sP(\Omega)$, $\sigma$ satisfies
\[
\|\sigma\|_{L^\infty([0,T];L^1(\Omega))} =2 
\]
and since $\alpha \in (0,1)$, it follows from Jensen's inequality that
\begin{equation}\label{eq:omegajensen}
  \|\xi_\eta(\sigma)\|_{L^\infty([0,T];L^1(\Omega))}\leqslant \kappa\|\sigma^{1-\alpha}\|_{L^\infty([0,T];L^1(\Omega))}\leqslant \kappa |\Omega|^\alpha 2^{1-\alpha}.
\end{equation}
Inequality \eqref{eq:omegajensen} may be used to derive an upper bound for the second line in Inequality \eqref{eq:omegaholder}. 
In particular, it follows that 
\[
\|\partial_x\log(\omega)\|_{L^\infty([0,T];L^1(\Omega))} \leqslant \kappa |\Omega|^\alpha 2^{1-\alpha}\|\partial_x(V_1-V_2)\|_{L^\infty(\Omega)} < +\infty.
\]

Having concluded that the constant $c$ may be taken to depend only on the Lipschitz constant of $V_1-V_2$, integrating each Inequality in \eqref{eq:ineqomegaphi} over $\Omega$ yields the first stated inequality.
Then, since the inequality is uniform over $[0,T]$, taking the supremum in time yields the second. 
\end{proof}

\section{Aggregate Estimates}\label{sec:aggregate}
    In Proposition \ref{phievolution}, it was proven that $\phi^\eta$ satisfies an evolution equation satisfying Property \ref{B1}.
    Observing Equality \eqref{eq:Cfeta}, it is clear that the coefficient function $b^\eta$ depends on the aggregate density $\sigma$.
    Consequently, this section is dedicated to establish sufficient Sobolev estimates for $\sigma$ and non-linear functions depending on $\sigma$ with which to establish Property \ref{B2}.

    \medskip

    Indeed, such estimates were already established in \cite{meszarosParker} and \cite{elbarfilippo2025} for the logarithmic pressure and for the fast-diffusive power law and so, instead of addressing $f$ satisfying Hypothesis \ref{hypothesis1}, our attention is instead focused on the general setting of Hypothesis \ref{hypothesis2}.

    \medskip 
    
    The analysis begins with the following Lemma, proven via the Sobolev embedding theory, and may be seen as a sharpening of the argument presented in \cite[Proposition 2.1]{elbarfilippo2025}.
    \begin{lemma}\label{cfineq}
        Suppose that $\sigma \in L^\infty([0,T];L^1(\Omega))$ and that there exists $p>  0$ such that $\partial_x \sigma^\frac{p}{2} \in L^2([0,T];L^2(\Omega))$.
        Then, there exists $\mathcal{K}_{0,p} > 0$ whose size depends only on $p, |\Omega|, T$ and 
        \[
        \|\sigma\|_{L^\infty([0,T];L^1(\Omega))}
        \]
        such that 
        \[
        \|\sigma\|_{L^{p+2}([0,T]\times\Omega)}^{p+2} \leqslant \mathcal{K}_{0,p}\left(\|\partial_x \sigma^\frac{p}{2} \|_{ L^2([0,T];L^2(\Omega))}^2+1\right).
        \]
    \end{lemma}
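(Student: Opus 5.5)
Set $u \coloneqq \sigma^{p/2}$, so that $\sigma^{p+2} = u^{2(p+2)/p}$ and the quantity $\|\sigma(t)\|_{L^1(\Omega)} = \|u(t)\|_{L^{2/p}(\Omega)}^{2/p}$ is bounded uniformly in $t$ by hypothesis. The plan is to prove, for almost every fixed $t$, a one-dimensional Gagliardo--Nirenberg type inequality
\[
\int_\Omega \sigma(t)^{p+2}\diff x \leqslant c\left(\|\partial_x \sigma^{\frac p2}(t)\|_{L^2(\Omega)}^2 + 1\right),
\]
with $c$ depending only on $p$, $|\Omega|$ and $M\coloneqq\|\sigma\|_{L^\infty([0,T];L^1(\Omega))}$. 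Integrating this in time over $[0,T]$ then gives the claim, with the constant absorbing a factor of $T$ in front of the $+1$.

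For fixed $t$, write $\|u\|_{L^\infty}$ via the fundamental theorem of calculus in one dimension. Since $\int_\Omega \sigma \leqslant M$, there is a point $x_0$ with $\sigma(x_0)\leqslant M/|\Omega|$. Hence, for every $x$,
\[
u(x) \leqslant (M/|\Omega|)^{p/2} + \int_\Omega |\partial_x u|\diff s \leqslant (M/|\Omega|)^{p/2} + |\Omega|^{1/2}\|\partial_x u\|_{L^2(\Omega)}.
\]
This uses absolute continuity of $u$, which holds since $\partial_x u \in L^2$ for a.e. $t$; I will note that $u$ is nonnegative. This gives
\[
\|\sigma\|_{L^\infty}^{p/2} \leqslant c\,(1+\|\partial_x u\|_{L^2}).
\]
Next, interpolate:
\[
\int_\Omega \sigma^{p+2} \leqslant \|\sigma\|_{L^\infty}^{p+1}\int_\Omega\sigma \leqslant M\,\|\sigma\|_{L^\infty}^{p+1} \leqslant c\,(1+\|\partial_x u\|_{L^2})^{\frac{2(p+1)}{p}}.
\]
The exponent $2(p+1)/p$ exceeds $2$, so this crude bound is not enough. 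This is the main obstacle: the estimate must be sharpened so that the gradient enters exactly quadratically.

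To fix this, I would do the $L^\infty$ bound more carefully, in the spirit of sharpening \cite[Proposition 2.1]{elbarfilippo2025}. Apply the fundamental theorem to a higher power, $w=\sigma^{q}$ with $q = \frac{p}{2}+1$, using $\partial_x w = \frac{2q}{p}\,\sigma^{q-\frac p2}\,\partial_x u = \frac{2q}{p}\,\sigma\,\partial_x u$. Then
\[
\|\sigma\|_{L^\infty}^{q} \leqslant (M/|\Omega|)^{q} + c\int_\Omega \sigma|\partial_x u| \leqslant c + c\,\|\sigma\|_{L^2}\|\partial_x u\|_{L^2}.
\]
Using $\|\sigma\|_{L^2}^2 \leqslant M\|\sigma\|_{L^\infty}$ gives
\[
\|\sigma\|_{L^\infty}^{\frac p2+1} \leqslant c + c\,\|\sigma\|_{L^\infty}^{1/2}\|\partial_x u\|_{L^2},
\]
and therefore $\|\sigma\|_{L^\infty}^{\frac{p+1}{2}} \leqslant c\,(1+\|\partial_x u\|_{L^2})$. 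In the last step, either $\|\sigma\|_{L^\infty}\leqslant 1$, or we divide by $\|\sigma\|_{L^\infty}^{1/2}\geqslant 1$. Consequently
\[
\int_\Omega\sigma^{p+2}\leqslant M\|\sigma\|_{L^\infty}^{p+1}\leqslant c\,(1+\|\partial_x u\|_{L^2}^2),
\]
which is exactly quadratic in the gradient. Integrating over $t\in[0,T]$ yields the lemma, with $\mathcal{K}_{0,p}$ depending only on $p$, $|\Omega|$, $T$ and $M$.
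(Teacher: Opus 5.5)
Your proof is correct. It uses the same ingredients as the paper: the one-dimensional bound of $L^\infty$ by $W^{1,1}$, the chain rule through $\partial_x\sigma^{p/2}$, Cauchy--Schwarz, and an absorption step. The bookkeeping is genuinely different, though.

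The paper works directly in space-time:
\begin{itemize}
\item It bounds $\int_0^T\!\int_\Omega\sigma^{p+2}\le M\int_0^T\|\sigma^{p+1}(t)\|_{L^\infty(\Omega)}\diff t$ and applies $W^{1,1}(\Omega)\hookrightarrow L^\infty(\Omega)$ to $\sigma^{p+1}$, using $\partial_x\sigma^{p+1}=2(1+\frac1p)\sigma^{\frac{p+2}{2}}\partial_x\sigma^{\frac p2}$.
\item After Cauchy--Schwarz in $(t,x)$, the unknown $X=\|\sigma\|_{L^{p+2}([0,T]\times\Omega)}^{p+2}$ reappears on the right as $\sqrt{X}$.
\item The lower-order term $\int_0^T\!\int_\Omega\sigma^{p+1}$ coming from the full $W^{1,1}$ norm is handled by H\"older and a threshold case distinction ($X\ge (2c_2)^{p+2}|\Omega|T$ or not) before absorbing.
\end{itemize}

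You instead close the estimate pointwise in time on the single quantity $\|\sigma(t)\|_{L^\infty(\Omega)}$:
\begin{itemize}
\item Integrating from a point where $\sigma\le M/|\Omega|$ removes the lower-order term altogether.
\item Applying the fundamental theorem of calculus to $\sigma^{\frac p2+1}$, together with $\|\sigma\|_{L^2}^2\le M\|\sigma\|_{L^\infty}$, gives the self-improving inequality $\|\sigma\|_{L^\infty}^{\frac p2+1}\le c+c\|\sigma\|_{L^\infty}^{1/2}\|\partial_x\sigma^{\frac p2}\|_{L^2}$.
\item Only the harmless $+1$ picks up the factor $T$ when you integrate in time.
\end{itemize}

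Your route buys two things. First, it yields a per-time inequality $\int_\Omega\sigma(t)^{p+2}\le c\bigl(1+\|\partial_x\sigma^{\frac p2}(t)\|_{L^2(\Omega)}^2\bigr)$ with $c$ independent of $T$. Second, its absorption step is rigorous without extra assumptions: $\|\sigma(t)\|_{L^\infty}<\infty$ for a.e. $t$ follows automatically from absolute continuity of $\sigma^{p/2}(t)$. The paper's division by $\sqrt{X}$ tacitly requires $X<\infty$ a priori. That holds for the smooth, bounded viscous solutions to which the lemma is applied, but it is not among the lemma's hypotheses.
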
 
    \begin{proof}
        Denote 
        \[
        c_1 \coloneqq \|\sigma\|_{L^\infty([0,T];L^1(\Omega))}.
        \]
        Then, the following inequality is satisfied by means of the $L^\infty-L^1$ H\"older inequality.
        \[
        \int_0^T \int_{\Omega}\sigma ^{p+2} \diff x \diff t \leqslant   c_1 \int_0^T\|\sigma^{p+1}(t)\|_{L^\infty(\Omega)} \diff t.
        \]
        Subsequently, since $W^{1,1}(\Omega) \hookrightarrow L^\infty(\Omega)$, there exists $c_2$ depending only on $ \Omega $ and $c_1$ for which 
        \begin{align*}
        \int_0^T & \int_{\Omega}\sigma ^{p+2} \diff x \diff t \leqslant   c_2 \int_0^T\int_{\Omega} |\partial_x  \sigma ^{p+1}| + |\sigma^{p+1}| \diff x \diff t \\
        & \leqslant c_2 \int_0^T \int_{\Omega}2\left(1+\f{1}{p}\right)\left|\partial_x \sigma^\frac{p}{2}\right|\sigma^{\frac{p+2}{2}} + |\sigma^{p+1}|  \diff x \diff t.
        \end{align*}
        Apply the $L^2$-H\"older inequality to the first summand in the second line of the above inequality and apply Jensen's inequality to the second summand to deduce that 
        \begin{equation}\label{eq:intermediate}
        \begin{split}
                \int_0^T & \int_{\Omega}\sigma ^{p+2} \diff x \diff t \leqslant c_2(|\Omega|T)^{\frac{1}{p+2}}\left(\int_0^T \int_{\Omega}|\sigma ^{p+2}|\diff x \diff t\right)^\frac{p+1}{p+2}\\
        &+ 2c_2\left(1+\f{1}{p}\right)\sqrt{\int_0^T\int_{\Omega} \left|\partial_x \sigma^\frac{p}{2}\right|^2 \diff x \diff t }\sqrt{\int_0^T \int_{\Omega}\sigma^{p+2} \diff x \diff t}.
        \end{split}
        \end{equation}
        Now, suppose that 
        \begin{equation}\label{eq:presume}
             \int_0^T \int_{\Omega}\sigma ^{p+2} \diff x \diff t \geq (2c_2)^{p+2}|\Omega| T.
        \end{equation}
        Then, it follows that 
        \begin{align*}
                \int_0^T & \int_{\Omega}\sigma ^{p+2} \diff x \diff t -  c_2(|\Omega|T)^{\frac{1}{p+2}}\left(\int_0^T \int_{\Omega}\sigma ^{p+2} \diff x \diff t\right)^\frac{p+1}{p+2}\\
                & \geqslant \frac{1}{2}  \int_0^T \int_{\Omega}\sigma ^{p+2} \diff x \diff t.
        \end{align*}
        Continuing to suppose that Inequality \eqref{eq:presume} holds, it follows from Inequality \eqref{eq:intermediate} that
        \begin{equation}\label{eq:intermediate2}
            \begin{split}
                \frac{1}{2}\sqrt{\int_0^T \int_{\Omega}\sigma ^{p+2} \diff x \diff t} & \leqslant 2c_2\left(1+\f{1}{p}\right)\sqrt{\int_0^T\int_{\Omega} \left|\partial_x \sigma^\frac{p}{2}\right|^2 \diff x \diff t }.
            \end{split}
        \end{equation}
        Square each side of Inequality \eqref{eq:intermediate2} and then denote 
        \[
        c_3 \coloneqq \left(4 \left(1+\f{1}{p}\right) c_2\right)^2.
        \]
        Consequently, the inequality 
        \[
        \|\sigma\|_{L^{p+2}([0,T]\times\Omega)}^{p+2} \leqslant c_3 \|\partial_x \sigma^\frac{p}{2}\|_{L^2([0,T]\times\Omega)}^2 
        \]
        holds for all $\sigma$ satisfying Inequality \eqref{eq:presume}.

        \medskip 
        
        Moreover, without assuming Inequality \eqref{eq:presume}, the following Inequality holds
        \[
        \|\sigma\|_{L^{p+2}([0,T]\times\Omega)}^{p+2} \leqslant \mathcal{K}_{0,p}\left(\|\partial_x \sigma^\frac{p}{2}\|_{L^2([0,T]\times\Omega)}^2 +1\right) 
        \]
        where $\mathcal{K}_{0,p} \coloneqq \max\{c_3, (2c_2)^{p+2}|\Omega|T\}$.
        In particular, the value of $\mathcal{K}_{0,p}$ depends only on $|\Omega|, T, p$ and the quantity $\|\sigma\|_{L^\infty([0,T];L^1(\Omega))}$.
    \end{proof} 
\begin{thm}\label{sumestimates}
    Let $f$ satisfy Hypothesis \ref{hypothesis1} or \ref{hypothesis2}.
        If $f$ satisfies Hypothesis \ref{hypothesis2} recall the constants $\kappa$ and $\alpha$. 
     Let $\eta \in I$, $\theta \in [1,+\infty)$ and let $(\rho_1,\rho_2)$ denote a solution of System \eqref{eq:vcdid} with initial data $\rho_{1,0},\rho_{2,0} \in L^\theta(\Omega)\cap \sP(\Omega)$. 
     Then, if $\theta \in (1,+\infty)$, there exists a constant $\mathcal{K}_{1,\theta}$ which depends only on $\alpha,\kappa, \theta, |\Omega|,T$, the size of the norm 
     \[
     \|\sigma_0\|_{L^{\theta}(\Omega)},
     \] 
     and the Lipschitz constants of the potentials, such that
     \begin{equation}\label{eq:thetamuineq}
     \begin{split}
     \left\|\sigma^{p}\right\|_{L^\infty([0,T];L^1(\Omega))} & + \left\|\partial_x\sigma^\f{p+\alpha-1}{2}\right\|_{L^2([0,T]\times \Omega)} \\
     & < \caK_{1,\theta} \text{ for every } p\in \left(0,\theta\right]
     \end{split}
     \end{equation}
     with the choice $p = 1-\alpha $ replaced by $\partial_x \log(\sigma)$ and the $\alpha$ taking the value $1$ for the logarithmic pressure law.
     In particular, the constant $\caK_{1,\theta}$ is independent of $\eta$.

     \medskip 

     If $\theta = 1$, then Inequality \eqref{eq:thetamuineq} instead holds for every $p \in \left(0,1\right)$.
\end{thm}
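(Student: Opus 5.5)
We test the aggregate equation of Proposition \ref{transform} against powers of $\sigma$. For $p\in(0,\theta]$ with $p\neq 1$, and $\log$ for $p=1$ (which is automatic, since $\|\sigma\|_{L^1}=2$), we compute $\frac{\diff}{\diff t}\frac{1}{p(p-1)}\int_\Omega \sigma^p\diff x$. For $p=1-\alpha$ the $\log$ replacement indicated in the statement is used. The no-flux condition \eqref{eq:rsigmabc} kills all boundary terms, and integration by parts gives
\begin{align*}
\frac{\diff}{\diff t}\frac{1}{p(p-1)}\int_\Omega\sigma^p\diff x ={}& -\int_\Omega \sigma^{p-1}\big(\eta\,\partial_x\log(\sigma)+\partial_x f'(\sigma)\big)\partial_x\sigma\diff x\\
&-\int_\Omega\sigma^{p-1}\bs{v}_\sigma\partial_x\sigma\diff x .
\end{align*}

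The $\eta$-term has a good sign and is discarded, which is what makes the constant independent of $\eta$. For the main dissipation, \ref{I2} gives $\partial_x f'(\sigma)\partial_x\sigma=f''(\sigma)|\partial_x\sigma|^2\geqslant \frac{1}{\alpha\kappa}\sigma^{\alpha-2}|\partial_x\sigma|^2$. The dissipation therefore dominates
\[
c\int_\Omega \sigma^{p+\alpha-3}|\partial_x\sigma|^2\diff x = c'\int_\Omega\big|\partial_x\sigma^{\frac{p+\alpha-1}{2}}\big|^2\diff x.
\]
The drift term is controlled using $|\bs{v}_\sigma|\leqslant \max_i\|\partial_xV_i\|_{L^\infty}$ and Young's inequality:
\[
\sigma^{p-1}|\partial_x\sigma| \leqslant \tfrac{\e}{2}\sigma^{p+\alpha-3}|\partial_x\sigma|^2+\tfrac{1}{2\e}\sigma^{p+1-\alpha}.
\]
Absorbing the first piece leaves
\[
\frac{\diff}{\diff t}\frac{1}{p(p-1)}\int_\Omega\sigma^p+c\big\|\partial_x\sigma^{\frac{p+\alpha-1}{2}}\big\|_{L^2}^2\leqslant C\int_\Omega\sigma^{p+1-\alpha}\diff x .
\]
Under Hypothesis \ref{hypothesis1} the same computation is exact, with $\alpha=1$ for the logarithmic law.

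The main obstacle is closing this inequality, since the right-hand side has the higher exponent $p+1-\alpha$. Two cases arise.

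When $p\in(0,1)$, the factor $\frac{1}{p(p-1)}$ is negative. The left-hand side then gives control of $-\int\sigma^p$, which is bounded by Jensen's inequality since $\int\sigma=2$. The right-hand side requires $p+1-\alpha\leqslant 1$ or interpolation, so for $p\leqslant\alpha$ Jensen's inequality bounds it directly. For the remaining $p<1$, I would apply Lemma \ref{cfineq}, or rather its pointwise-in-time version using the Gagliardo--Nirenberg type embedding $W^{1,1}\hookrightarrow L^\infty$ with exponent $q=p+\alpha-1$. This shows that $\int\sigma^{p+1-\alpha}$ is bounded by $\varepsilon\|\partial_x\sigma^{q/2}\|_{L^2}^2+C_\varepsilon$, because $p+1-\alpha<q+2$ and $\|\sigma\|_{L^1}$ is fixed. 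This absorbs it into the dissipation. Integrating in time then gives both bounds uniformly in $\eta$.

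When $p\in(1,\theta]$, the same interpolation, $\int\sigma^{p+1-\alpha}\leqslant \varepsilon\|\partial_x\sigma^{(p+\alpha-1)/2}\|_{L^2}^2+C(1+\int\sigma^p)$, together with Gronwall's inequality, closes the estimate from $\|\sigma_0\|_{L^\theta}$. Here $\int\sigma_0^p$ is controlled by the $L^\theta$ norm through Hölder's inequality on the bounded domain. A monotonicity bootstrap (first $p\leqslant1$, then increasing $p$) may be needed to supply the lower-order norms.

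When $\theta=1$, no initial control beyond mass is available, so only the range $p<1$ survives, which is exactly the last assertion of the statement. The delicate points are verifying the exponent bookkeeping in the interpolation near $p=1$ and $p=1-\alpha$, and checking that the constants depend only on $\alpha,\kappa,\theta,|\Omega|,T$, the Lipschitz constants of the potentials and $\|\sigma_0\|_{L^\theta}$.
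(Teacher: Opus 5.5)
Your proposal is correct, but it closes the estimate by a different mechanism from the paper. Both arguments start from the same dissipation inequality,
\[
\frac{1}{p(p-1)}\partial_t\int_\Omega\sigma^p\diff x+c\big\|\partial_x\sigma^{\frac{p+\alpha-1}{2}}\big\|_{L^2(\Omega)}^2\leqslant C\int_\Omega\sigma^{p+1-\alpha}\diff x .
\]

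\textbf{The paper's route.} It integrates this inequality in time and runs a finite bootstrap in the exponent. For $p\leqslant\alpha$ the right-hand side is bounded by Jensen's inequality. The resulting space-time gradient bound is fed into the space-time form of Lemma~\ref{cfineq}, which bounds $\|\sigma\|_{L^{p+\alpha+1}([0,T]\times\Omega)}$. That norm is exactly the right-hand side for the exponent $p+2\alpha$, so the argument is iterated until $\theta$ is reached.

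\textbf{Your route.} You use the pointwise-in-time version of the same interpolation. For fixed $t$ and $q=p+\alpha-1>-1$, the argument of Lemma~\ref{cfineq} gives $\int_\Omega\sigma^{q+2}\leqslant C(1+\|\partial_x\sigma^{q/2}\|_{L^2}^2)$, where $C$ depends only on the mass, $|\Omega|$ and $q$. H\"older interpolation between $L^1$ and $L^{q+2}$ then yields
\[
\int_\Omega\sigma^{p+1-\alpha}\leqslant C\big(1+\|\partial_x\sigma^{q/2}\|_{L^2}^2\big)^{\frac{p-\alpha}{p+\alpha}}\quad\text{for } p>\alpha .
\]
This bound is sublinear in the dissipation, so Young's inequality absorbs it. Each $p$ therefore closes on its own, with no iteration. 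Since the interpolation constant depends only on the mass, your Gronwall step and the ``monotonicity bootstrap'' for $p>1$ are unnecessary. Integrating in time already gives a bound that grows linearly in $T$, and the initial data enter only through $\int_\Omega\sigma_0^p$.

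\textbf{Comparison.} Your route is shorter and makes the dependence of the constants transparent; in particular, it shows that for $p<1$ only the mass matters. The paper's route uses Lemma~\ref{cfineq} exactly as stated, in space-time, at the cost of exponent bookkeeping.

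\textbf{Points to tidy up.} First, when $\alpha<\frac12$ the exponent $q$ can be negative. You should check that the proof of Lemma~\ref{cfineq} survives for $q\in(-1,0)$; it does, since it only uses $q+1>0$. The paper also uses the lemma this way implicitly. Second, for $\theta>1$, mass conservation gives only the $L^\infty L^1$ part at $p=1$, not the gradient bound on $\partial_x\sigma^{\alpha/2}$. That part needs the entropy identity for $\int_\Omega\sigma\log\sigma$ (your $\log$ replacement), or the pointwise bound $\sigma^{\alpha-2}\leqslant\sigma^{p_1+\alpha-3}+\sigma^{p_2+\alpha-3}$ with $p_1<1<p_2$.
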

\begin{proof}
For any $p \in \R\setminus \{1,0\}$, a solution of System \eqref{eq:rsigmaevolution} satisfies the dissipation equality
\begin{equation}\label{eq:sumede} 
\begin{split}
\f{1}{p(p-1)}\partial_t\int_{\Omega}  \sigma^p \diff x & + \int_{\Omega} \sigma^{p-1}\left(f''(\sigma) + \frac{\eta}{\sigma}\right)|\partial_x\sigma|^2 \diff x \\
& = -\int_{\Omega} \sigma^{p-1} \partial_x \sigma \bs{v}_\sigma \diff x.
\end{split}
\end{equation}    
Indeed, such an equality is derived by integrating by parts and utilising the no-flux boundary condition.

\medskip 

In the case that the pressure law is then $f''(\sigma) = \frac{1}{\sigma}$ and the estimates follow from standard argument for the linear Fokker--Planck equation (see, for example \cite[Proposition 3.2]{DiMarinoSantambrogio}), on the other hand, if $f$ satisfies Hypothesis \ref{hypothesis1} with a power law or \ref{hypothesis2}, then we may use Assumption \ref{I2} to provide a lower bound for the first line of Inequality \eqref{eq:sumede} and Young's $L^2$ Inequality to act as an upper bound for the second line. 

\medskip 

Subsequently, the following dissipation inequality holds for each $t \in (0,T)$.

\begin{align*}
& \f{1}{p(p-1)} \partial_t\int_{\Omega}  \sigma^p \diff x + \frac{1}{\alpha\kappa}\sigma^{p-3+\alpha}|\partial_x\sigma|^2 \diff x \\
& \leqslant  \int_{\Omega} \sigma^{p-1} \partial_x \sigma \bs{v}_\sigma \diff x
\\
& \leqslant  \int_{\Omega} \frac{1}{2\alpha\kappa}\sigma^{p-3+\alpha}|\partial_x\sigma|^2 + \frac{\alpha\kappa}{2}|\bs{v}_\sigma|^2 \sigma^{p+1-\alpha} \diff x\\
& \leqslant  \frac{1}{2\alpha\kappa}\int_{\Omega} \sigma^{p-3+\alpha}|\partial_x\sigma|^2 \diff x+ \frac{\alpha\kappa}{2}\|\bs{v}_\sigma\|^2_{L^\infty([0,T]\times\Omega)} \int_{\Omega} \sigma^{p+1-\alpha} \diff x.
\end{align*}    

In particular,
\[
\|\bs{v}_\sigma\|^2_{L^\infty([0,T]\times\Omega)} \leqslant \sum_{i=1}^2 \|\partial_xV_i\|_{L^\infty(\Omega)} \eqqcolon c_1
\]
 and so, if $p \neq 1-\alpha$, then, integrating over $[0,T]$, it follows that
\begin{equation}\label{eq:sumedi}
\begin{split}
     & \f{1}{p(p-1)} \left(\int_{\Omega}  \sigma^p \diff x \right)\bigg |_{t=T} + \frac{2}{\alpha\kappa(p+\alpha-1)^2}\int_0^T\int_{\Omega} \left|\partial_x\sigma^\frac{p+\alpha-1}{2}\right|^2 \diff x \diff t \\
      & \leqslant  \f{1}{p(p-1)} \left(\int_{\Omega}  \sigma^p \diff x \right)\bigg |_{t=0}  + \frac{\alpha\kappa}{2}c_1 \int_0^T\int_{\Omega} \sigma^{p+1-\alpha} \diff x \diff t.
\end{split}
\end{equation}
If $ p > 1$, then 
\[
\frac{1}{p(p-1)} > 0
\]
and hence, 
it follows from Inequality \eqref{eq:sumedi} that 
\[
\|\sigma^p\|_{L^\infty([0,T];L^1(\Omega))} + \|\partial_x\sigma^\frac{p+\alpha-1}{2}\|_{L^2([0,T]\times\Omega)}
\]
may be bounded in terms of $c_1$,
\[
\|\sigma^{p+1-\alpha}\|_{L^1([0,T]\times\Omega)} \text{ and } \|\sigma^p_0\|_{L^1(\Omega)}.
\]
On the other hand, equipped with the assumption $\rho_{1,0},\rho_{2,0} \in \sP(\Omega)$, it follows that 
\[
\|\sigma\|_{L^\infty([0,T];L^1(\Omega))} =2.
\]
Consequent to the conservation of mass, it follows from Jensen's inequality that, for any $p \in (0,1]$, the density $\sigma^p$ is bounded in $L^1([0,T]\times\Omega)\cap L^\infty([0,T]:L^1(\Omega))$
by a constant depending only on $T$ and $|\Omega|$.
Moreover, if $p \in (0,1)$ then it follows from Inequality \eqref{eq:sumedi} that
\[
\|\partial_x\sigma^\frac{p+\alpha-1}{2}\|_{L^2([0,T]\times\Omega)}
\]
may be bounded by the norm 
\[
\|\sigma^{p+1-\alpha}\|_{L^1([0,T]\times\Omega)},
\]
 the constant $c_1$ and a constant which depends only on $\alpha,\kappa,\Omega, T$.

\medskip 

Indeed, by first choosing $p\in (0,\alpha]$, it follows from Inequality \eqref{eq:sumedi} that
\begin{equation}\label{eq:bootbegin}
    \|\partial_x\sigma^\frac{p+\alpha-1}{2}\|_{L^2([0,T]\times\Omega)}, \text{ for } \ p \in (0, \alpha],
\end{equation}

is bounded by a constant whose size depends only on $\alpha,\kappa,|\Omega|, T$ and the Lipschitz constants of the potentials. 

To produce the desired integrability for further positive values of $p$, we recall Lemma \ref{cfineq}.
In particular, since 
\[
\|\sigma\|_{L^\infty([0,T];L^1(\Omega))} =2,
\]
there exists a constant $\caK_{0,p+\alpha-1}$, whose size depends only on $p,\alpha, |\Omega|, T$, such that 
\begin{equation}\label{eq:bootstrap}
    \|\sigma\|_{L^{p+\alpha+1}([0,T]\times\Omega)}^{p+\alpha+1} \leqslant \caK_{0,p+\alpha-1}\left(\|\partial_x \sigma^\frac{p+\alpha-1}{2} \|_{ L^2([0,T];L^2(\Omega))}^2+1\right).
\end{equation}
By using Inequality \eqref{eq:bootstrap} in conjunction with Inequality \eqref{eq:sumedi} it is possible to employ a bootstrapping argument, which will be detailed in the following paragraphs, in order to ensure that the second summand in the second line of Inequality \eqref{eq:sumedi} is bounded, for any $p \in (0,\theta]$, by a constant which depends only on $\kappa, p,\alpha$, the Lipschitz constants of the potentials, $T$ and $|\Omega|$.

\medskip 

Moreover, by facilitating such a range of $p$, it is established via Inequality \eqref{eq:sumedi} that 
 \begin{align*}
    \|\sigma^{p+1-\alpha}\|_{L^\infty([0,T];L^1(\Omega))} & + \|\partial_x\sigma^\f{p}{2}\|_{L^2([0,T]\times \Omega)} \\
    & < \caK_{1,\theta} \text{ for every } p \in \left(0,\theta\right]
 \end{align*}
The bootstrapping argument reads as follows:

\medskip 

Since quantity \eqref{eq:bootbegin} is bounded, Inequality \eqref{eq:bootstrap} may be utilised for any choice of $p_1\in (0,\alpha]$.
Indeed, for this range of $p_1$, 
\[
\|\sigma^{p_1+\alpha+1}\|_{L^{1}([0,T]\times\Omega)}
\]
may be bounded by a constant whose size depends only on $\alpha,\kappa, p_1, |\Omega|, T$ and the Lipschitz constants of the potentials. 
Moreover, equipped with this knowledge, it is possible to return to Inequality \eqref{eq:sumedi} with the substitution $p = p_2 = \min\{\theta,p_1+ 2\alpha\}$.
In particular, for the choice $p_2$, it is known that the second line of Inequality \eqref{eq:sumedi} will be suitably bounded thanks to Inequality \eqref{eq:bootstrap}.
If $\min\{\theta,p_1+ 2\alpha\} = \theta$ then the bootstrapping argument is complete.

\medskip 

On the other hand, if  $\min\{\theta,p_1+ 2\alpha\} = p_1+2\alpha$, then Inequality \eqref{eq:sumedi} establishes a bound for 
\begin{equation}\label{eq:boot2}
    \|\partial_x\sigma^\frac{p_1 +3\alpha-1}{2}\|_{L^2([0,T]\times\Omega)}.
\end{equation}
From this estimate, we may return to Inequality \eqref{eq:bootstrap} for the choice $p = p_1+3\alpha -1$ and repeat the argument. 
In particular, each further iterate of the bootstrapping method will raise the range of admissible $p$ by $2\alpha$.
Moreover, since $\alpha > 0$, it is possible to iterate the method a finite number of times to achieve integrability up to the value of $\theta < + \infty$ which was fixed in the statement of the Theorem.

\medskip 

Furthermore, each of the bounds introduced in the bootstrapping argument depended only on $\alpha,\kappa, p, |\Omega|,T$ the Lipschitz constants of the potentials and the integrability of the initial data.
Specifically the dependence on the initial data induced by each iteration of the bootstrap is only through norm 
\[
\|\sigma_0\|_{L^{p_i}(\Omega)}
\]
which, by Jensen's inequality may be dominated by the norm 
\[
\|\sigma_0\|_{L^{\theta}(\Omega)}
\]
multiplied by a constant which depends only on $|\Omega|$.
Moreover, by aggregating the bounds attained from each iterate of the bootstrap and letting $\mathcal{K}_{1,\theta}$ denote the subsequent constant, we conclude the claim.

\medskip 

The case $\theta =1$ must be treated with care since Inequality \eqref{eq:sumedi} does not hold for $p=1$.
In this instance, one may instead consider the bootstrapping argument up to any $p \in (0,1)$.
\end{proof}

The previous theorem established $p$-integrability for the aggregate density uniformly in $\eta$.
Complementary to this, the following Theorem establishes $p$-integrability for its reciprocal.

\begin{thm}\label{reciprocalestimates}
     Let $f$ satisfy Hypothesis \ref{hypothesis1} or Hypothesis \ref{hypothesis2}.
     If $f$ satisfies Hypothesis \ref{hypothesis2} recall the constants $\kappa$ and $\alpha$. 
     Let $\eta \in I$, $\theta \in (1-\alpha,+\infty)$ and let $(\rho_1,\rho_2)$ denote a solution of System \eqref{eq:vcdid} with $\rho_{1,0},\rho_{2,0} \in \sP(\Omega)$ and $\sigma_0^{-\theta} \in L^1(\Omega)$. 
     Then, there exists $\caK_{2,\theta} > 0$ whose size depends only on $\alpha,\kappa, \theta, |\Omega|,T$, the norm 
     \[
     \|\sigma_0^{-\theta}\|_{L^1(\Omega)},
     \]
     and the Lipschitz constants of the potentials such that 
     \begin{equation}\label{eq:thetamuineq2}
     \left\|\sigma^{-\theta}\right\|_{L^\infty([0,T];L^1(\Omega))} + \left\|\partial_x\sigma^\f{\theta+\alpha -1}{2}\right\|_{L^2([0,T]\times \Omega)}  < \caK_{2,\theta} 
     \end{equation}
     In particular, the constant $\caK_{2,\theta} $ is independent of $\eta$.
\end{thm}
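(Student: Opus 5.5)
The plan is to reuse the dissipation identity \eqref{eq:sumede} from the proof of Theorem \ref{sumestimates}, now with a \emph{negative} exponent $p=-\theta$. Since $\theta>1-\alpha>0$, we have $p\notin\{0,1\}$. Moreover
\[
\frac{1}{p(p-1)}=\frac{1}{\theta(\theta+1)}>0,
\]
so the time-derivative term keeps the favourable sign, exactly as in the case $p>1$. The identity is justified because the classical solution from Theorem \ref{classical} has $\sigma>0$ on $[0,T]\times\overline{\Omega}$. Hence $\sigma^{-\theta}$ is smooth, and the integration by parts generates no boundary term, thanks to the no-flux condition \eqref{eq:rsigmabc}.

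\textbf{Step 1: dissipation inequality.} In the diffusion term $\int_\Omega \sigma^{-\theta-1}(f''(\sigma)+\eta/\sigma)|\partial_x\sigma|^2$, I drop the $\eta$-part, which is nonnegative. I bound the rest from below by Assumption \ref{I2}, which gives
\[
\frac{1}{\alpha\kappa}\int_\Omega\sigma^{-\theta-3+\alpha}|\partial_x\sigma|^2 .
\]
For the logarithmic pressure one uses $f''(s)=\lambda/s$ instead, i.e.\ $\alpha=1$ with $\kappa$ replaced by $1/\lambda$; for the power law of Hypothesis \ref{hypothesis1} the bound holds with explicit constants. I treat the drift term $-\int\sigma^{-\theta-1}\partial_x\sigma\,\bs{v}_\sigma$ by Young's inequality as before, absorbing half of the gradient term. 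Since $\|\bs{v}_\sigma\|_{L^\infty}\leqslant \|\partial_xV_1\|_{L^\infty}+\|\partial_xV_2\|_{L^\infty}$, the remainder is $\tfrac{\alpha\kappa}{2}c_1\int_\Omega\sigma^{-(\theta+\alpha-1)}$. The gradient term is then rewritten as a multiple of
\[
\Big|\partial_x\sigma^{\frac{-\theta+\alpha-1}{2}}\Big|^2 .
\]
This is the quantity controlled in \eqref{eq:thetamuineq2}, up to the sign convention in the exponent; the constant $\tfrac{4}{(\theta+1-\alpha)^2}$ is finite because $\theta+1-\alpha>0$.

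\textbf{Step 2: closing the remainder.} This is the only real issue, and it is where the hypothesis $\theta>1-\alpha$ enters. The exponent $q:=\theta+\alpha-1$ satisfies $0<q<\theta$. By H\"older's (Jensen's) inequality on the bounded domain,
\[
\int_\Omega\sigma^{-q}\leqslant|\Omega|^{1-q/\theta}\Big(\int_\Omega\sigma^{-\theta}\Big)^{q/\theta}\leqslant c\Big(1+\int_\Omega\sigma^{-\theta}\Big),
\]
with $c$ depending only on $|\Omega|,\theta,\alpha$. Substituting this gives a differential inequality of the form
\[
\partial_t Y+\textstyle\frac{c'}{\theta(\theta+1)}\|\partial_x\sigma^{-q'/2}\|_{L^2}^2\leqslant c''(1+Y),\qquad Y(t)=\tfrac{1}{\theta(\theta+1)}\int_\Omega\sigma^{-\theta}(t),
\]
where $q'=\theta+1-\alpha$. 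It is linear in $Y$, so no bootstrap is needed here, in contrast with Theorem \ref{sumestimates}.

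\textbf{Step 3: Gr\"onwall.} Gr\"onwall's lemma bounds $\sup_t Y(t)$ in terms of $\|\sigma_0^{-\theta}\|_{L^1}$, $T$, $\alpha$, $\kappa$, $|\Omega|$ and $c_1$. Integrating the inequality in time then bounds the $L^2_{t,x}$ norm of the gradient. None of the constants involve $\eta$, since the only $\eta$-dependent term was discarded with a favourable sign. This yields $\caK_{2,\theta}$.
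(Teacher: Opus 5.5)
Your proposal is correct and follows the paper's proof essentially step for step. The paper also takes $p=-\theta$ in the dissipation inequality \eqref{eq:sumedi2}, applies Jensen/H\"older to bound $\int_\Omega\sigma^{-(\theta+\alpha-1)}$ by a multiple of $1+\int_\Omega\sigma^{-\theta}$ (this is exactly where $\theta>1-\alpha$ enters), and then uses Gr\"onwall and a time integration to control the gradient term. Your remark on the exponent is also right: as in the paper's own proof, the quantity actually controlled is $\partial_x\sigma^{(\alpha-1-\theta)/2}$, so the positive exponent in \eqref{eq:thetamuineq2} should be read with that sign correction.
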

\begin{proof}
We present the argument for $f$ satisfying Hypothesis \ref{hypothesis1} with a power law or for $f$ satisfying Hypothesis \ref{hypothesis2}.
A similar argument may be made in the case of the logarithmic pressure.

\medskip 

Recall from Inequality \eqref{eq:sumedi} that, for any $p \in \R\setminus \{1,0\}$, a solution of System \eqref{eq:rsigmaevolution} satisfies the following dissipation inequality for each $t \in (0,T)$
\begin{equation}\label{eq:sumedi2}
\begin{split}
      \f{1}{p(p-1)} \partial_t \int_{\Omega}  \sigma^p \diff x & + \frac{2}{\alpha\kappa(p+\alpha-1)^2}\int_{\Omega} \left|\partial_x\sigma^\frac{p+\alpha-1}{2}\right|^2 \diff x \\
     & \leqslant \frac{\alpha\kappa}{2}c_1 \int_{\Omega} \sigma^{p+1-\alpha} \diff x
\end{split}
\end{equation}
with 
\[
\sum_{i=1}^2 \|\partial_xV_i\|_{L^\infty(\Omega)} \eqqcolon c_1.
\]
If $ p < \alpha - 1 $ then $p+1-\alpha< 0$ and so it is possible to apply a Jensen inequality to the integral in the second line of Inequality \eqref{eq:sumedi2} which satisfies
\begin{equation}\label{eq:apjensen}
    \int_{\Omega} \sigma^{p+1-\alpha} \diff x \leqslant |\Omega|^\frac{\alpha-1}{p} \left(\int_{\Omega} \sigma^p \diff x\right)^\frac{p+1-\alpha}{p}\leqslant |\Omega|^\frac{\alpha-1}{p} \left(1 + \int_{\Omega} \sigma^p \diff x\right).
\end{equation}
By using Inequality \eqref{eq:apjensen} as an upper bound for Inequality \eqref{eq:sumedi2}, it follows that 
\begin{equation}\label{eq:sumedi3}
\begin{split}
      \f{1}{p(p-1)} \partial_t \int_{\Omega}  \sigma^p \diff x & + \frac{2}{\alpha\kappa(p+\alpha-1)^2}\int_{\Omega} \left|\partial_x\sigma^\frac{p+\alpha-1}{2}\right|^2 \diff x \\
     & \leqslant \frac{\alpha\kappa}{2}c_v |\Omega|^\frac{\alpha-1}{p} \left(1 + \int_{\Omega} \sigma^p \diff x\right).
\end{split}
\end{equation}
To conclude the argument, consider the case $p = -\theta$, let $c_2$ denote the constant
\[
c_2 \coloneqq \alpha \kappa\frac{p(p-1)}{2}|\Omega|^\frac{\alpha-1}{p}
\]
and apply a Gr\"onwall argument to Inequality \eqref{eq:sumedi3}.
This establishes the claim that
\begin{equation}\label{eq:recipgronwall}
    1+ \int_{\Omega}  \sigma(t)^{-\theta} \diff x \leqslant \exp(c_2t)\left(1+ \int_{\Omega} \sigma(0)^{-\theta} \diff x\right) 
\end{equation}
for each $t \in [0,T]$. 
Hence, the norm
\[
\left\|\sigma^{-\theta}\right\|_{L^\infty([0,T];L^1(\Omega))} 
\]
is bounded by a constant that depends only on $\alpha, \kappa, |\Omega|, \theta, T$, the Lipschitz constants of the potentials, and the integrability of the initial data. 

\medskip 

To deduce the gradient estimate, it is left to integrate Inequality \eqref{eq:sumedi3} over $[0,T]$ equipped with the estimate for the norm 
\[
\|\sigma^{-\theta}\|_{L^\infty([0,T];L^1(\Omega))}.
\]
\end{proof}

The third aggregate density estimate we require concerns the pressure. 
Indeed, since the theory accommodates a class of $f$ which may not behave precisely like a power-law, it is not immediately clear that the quantity
\[
\|\partial_xf(\sigma) \sqrt{\sigma}\|_{L^2([0,T]\times\Omega)}
\]
can be controlled by the previously established estimates of Theorems \ref{sumestimates} and \ref{reciprocalestimates}.
Instead, it is necessary to utilise the gradient flow structure of the system to establish the integrability of the Fisher information.
In particular, the energy associated to the viscous system is defined as follows.
\begin{definition}\label{gfenergy}
    For each $\eta \in I$, let $\mathscr{F}_\eta\co\sP^{ac}(\Omega)\times\sP^{ac}(\Omega)\to \R$ denote the energy given by
    \begin{equation}\label{eq:gfenergy}
    \begin{split}
    \mathscr{F}_\eta[\rho_{1},\rho_{2}] \coloneqq  \int_{\Omega} f(\rho_1+\rho_2) & + \eta\rho_1\log(\rho_1) +  \eta\rho_2\log(\rho_2)\diff x \\
    & + \int_{\Omega} V_1 \rho_1 + V_2\rho_2 \diff x.
    \end{split}
    \end{equation}
\end{definition}
\begin{proposition}\label{gfedi}
    Let $\eta \in I$ and let $(\rho_1,\rho_2)$ denote a solution of System \eqref{eq:vcdid}.
    Then, the energy dissipation equality 
    \begin{equation}\label{eq:gfedi}
    \begin{split}
    \partial_t & \mathscr{F}_\eta[\rho_{1}(t),\rho_{2}(t)] \\
    & = -\sum_{i=1}^2 \int_{\Omega}\left|\partial_x\left(f'(\rho_1(t)+\rho_2(t))+ V_i + \eta\log(\rho_i(t))\right)\right|^2\rho_i(t) \diff x.    
    \end{split}
    \end{equation}
    is satisfied for each $t \in (0,T)$.

    \medskip 

    Consequently, since Assumption \ref{I4} is satisfied, there exists a constant $\mathcal{K}_3>0$, which depends only on $\mathcal{K}_{\inf},T$, the Lipschitz constants of the potentials, and 
    \[
    \mathscr{F}_\eta[\rho_{1,0},\rho_{2,0}],
    \] 
    such that 
    \begin{equation}\label{eq:fisher}
    \|\partial_xf(\sigma) \sqrt{\sigma}\|_{L^2([0,T]\times\Omega)}^2 < \mathcal{K}_3.  
    \end{equation}
    In particular, $\mathcal{K}_3$ does not depend on $\eta$.
\end{proposition}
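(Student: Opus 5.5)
Differentiating $\mathscr{F}_\eta[\rho_1(t),\rho_2(t)]$ in time is legitimate: Theorem \ref{classical} gives $\rho_i\in C^1([0,T];C^1(\overline{\Omega}))$ and $\sigma>0$. Write $\mu_i\coloneqq f'(\sigma)+V_i+\eta\log(\rho_i)$. The first variation of $\mathscr{F}_\eta$ with respect to $\rho_i$ is $\mu_i$ up to the constant $\eta$, and that constant contributes nothing because $\int_\Omega\partial_t\rho_i\diff x=0$ by the no-flux condition. Hence
\[
\partial_t\mathscr{F}_\eta=\sum_{i=1}^2\int_\Omega \mu_i\,\partial_t\rho_i\diff x .
\]
System \eqref{eq:vcdid} can be written as $\partial_t\rho_i=\partial_x(\rho_i\partial_x\mu_i)$, since $\eta\partial_x\rho_i=\eta\rho_i\partial_x\log(\rho_i)$, and the boundary condition becomes $\rho_i\partial_x\mu_i=0$ on $\partial\Omega$. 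One integration by parts then gives \eqref{eq:gfedi} with no boundary term.

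The first technical point is that $\rho_i$ itself may vanish, so $\log(\rho_i)$ may be singular. I would replace $\eta\rho_i\log(\rho_i)$ by $\eta\rho_i\log(\rho_i+\delta)$, or run the computation on $\{\rho_i>0\}$. The combination $\rho_i\partial_x\mu_i=\eta\partial_x\rho_i+\rho_i\partial_x(f'(\sigma)+V_i)$ is continuous, so letting $\delta\to0$ by dominated convergence recovers \eqref{eq:gfedi}, with the dissipation read as $\int_\Omega|\rho_i\partial_x\mu_i|^2/\rho_i\diff x$. Alternatively one can use the strict positivity from the construction in Appendix A.

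For \eqref{eq:fisher}, integrate \eqref{eq:gfedi} over $[0,T]$. This bounds $\int_0^T\sum_i\int_\Omega\rho_i|\partial_x\mu_i|^2$ by $\mathscr{F}_\eta[\rho_{1,0},\rho_{2,0}]-\mathscr{F}_\eta[\rho_1(T),\rho_2(T)]$. The lower bound on $\mathscr{F}_\eta$ at time $T$ comes from three pieces:
\begin{itemize}
\item $\int_\Omega f(\sigma)\diff x\geqslant-\mathcal{K}_{\inf}$ by \ref{I4}, after rescaling $\sigma/2$ to a probability measure; alternatively I would assume \ref{I4} is meant to cover mass $2$.
\item $\eta\int_\Omega\rho_i\log(\rho_i)\diff x\geqslant-\eta|\Omega|/e\geqslant-|\Omega|/e$, using $\eta\leqslant1$.
\item $|\int_\Omega V_i\rho_i\diff x|\leqslant\|V_i\|_{L^\infty(\Omega)}$, which is controlled by $|V_i(0)|$ plus the Lipschitz constant times $L$; one may normalise $V_i(0)=0$, since adding constants to $V_i$ does not change the system.
\end{itemize}

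Next, isolate the pressure part of the dissipation. By convexity of $|\cdot|^2$,
\[
\sigma|\partial_xf'(\sigma)|^2\leqslant 3\sum_i\rho_i\left(|\partial_x\mu_i|^2+|\partial_xV_i|^2+\eta^2|\partial_x\log(\rho_i)|^2\right).
\]
The $V_i$ term is bounded by the Lipschitz constants times the mass. The $\eta^2\rho_i|\partial_x\log(\rho_i)|^2=\eta^2|\partial_x\rho_i|^2/\rho_i$ term is the main obstacle: it is not obviously uniform in $\eta$. I would try to remove it by expanding $|\partial_x\mu_i|^2$ directly. The cross term is
\[
2\eta\,\partial_x\rho_i\,\partial_x(f'(\sigma)+V_i).
\]
After summing over $i$, its pressure part is $2\eta f''(\sigma)|\partial_x\sigma|^2\geqslant0$. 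Its potential part, $2\eta\,\partial_x\rho_i\,\partial_xV_i$, is absorbed by Young's inequality into $\tfrac{\eta^2}{2}|\partial_x\rho_i|^2/\rho_i$ plus $2\rho_i|\partial_xV_i|^2$. What remains is
\[
\sum_i\rho_i|\partial_x\mu_i|^2\geqslant\sigma|\partial_xf'(\sigma)|^2-C\sum_i\|\partial_xV_i\|_{L^\infty}^2,
\]
and integrating in time yields a bound of this form uniform in $\eta$.

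Finally, identify the quantity in \eqref{eq:fisher}: I read $\partial_xf(\sigma)$ there as the pressure gradient $\partial_xf'(\sigma)$, as in the dissipation. The constant $\mathcal{K}_3$ then depends only on $\mathcal{K}_{\inf}$, $T$, the Lipschitz constants of the potentials, $|\Omega|$ and $\mathscr{F}_\eta[\rho_{1,0},\rho_{2,0}]$, and not on $\eta$.
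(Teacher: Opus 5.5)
Your proposal is correct and follows essentially the paper's route: writing $\partial_t\rho_i=\partial_x(\rho_i\partial_x\mu_i)$ and integrating by parts with the no-flux condition, then bounding the dissipation below by Young's inequality on the $\partial_xV_i$ contributions together with the observation that the summed cross term is $2\eta f''(\sigma)|\partial_x\sigma|^2\geqslant0$, and finally integrating in time against a lower bound for $\mathscr{F}_\eta$. The only difference is the order of steps. The paper applies Young once to split $\partial_xV_i$ from $\eta\partial_x\log(\rho_i)+\partial_xf'(\sigma)$ and then expands the square, which leaves a factor $\tfrac12$ in front of $\sigma|\partial_xf'(\sigma)|^2$; your final display needs that factor too, since coefficient $1$ is not attainable, but this is harmless for \eqref{eq:fisher}.
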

\begin{proof}
For ease of exposition let 
\[
\psi_i^\eta = f'(\rho_1+\rho_2) + \eta(\log(\rho_i)+1) +V_i.
\]
Then, since System \eqref{eq:vcdid} is satisfied, the derivative of the energy satisfies the following equality.
In particular, in the following equality, integration by parts does not produce a boundary term thanks to the no-flux condition.
\begin{align*}
    \partial_t & \mathscr{F}_\eta[\rho_{1}(t),\rho_{2}(t)] \\
    & = \sum_{i=1}^2 \int_{\Omega}\partial_t\rho_i (f'(\rho_1+\rho_2) + \eta(\log(\rho_i)+1) +V_i) \diff x
    \\   & = \sum_{i=1}^2 \int_{\Omega}\partial_x(\rho_i\partial_x\psi_i^\eta) \psi_i^\eta \diff x = -\sum_{i=1}^2 \int_{\Omega}|\partial_x\psi_i^\eta|^2 \rho_i \diff x
\end{align*}
\end{proof}
Subsequently, consider the expansion
   \begin{equation}\label{eq:fisherex}
   \begin{split}
      & \sum_{i=1}^2 \int_{\Omega}|\psi_i^\eta|^2\rho_i \diff x = \int_{\Omega}|\eta\partial_x\log(\rho_i) + \partial_x f'(\sigma)|^2\rho_i  \diff x \\ 
        & +   \sum_{i=1}^2\int_{\Omega} 2\partial_x V_i \cdot (\eta\partial_x \log(\rho_i) + \partial_x f'(\sigma))\rho_i \diff x + \sum_{i=1}^2\int_{\Omega} |\partial_x V_i|^2\rho_i \diff x
   \end{split}
   \end{equation}
   and let
   \[
   c_1 \coloneqq \sum_{i=1}^2\|\partial_x V_i\|_{L^\infty(\Omega)}.
   \]
   Then, by applying Young's inequality to the right hand-side of Equation \eqref{eq:fisherex}, it follows that  
   \begin{equation}\label{eq:fisheryoung}
   \begin{split}
      & \sum_{i=1}^2 \int_{\Omega}|\psi_i^\eta|^2\rho_i \diff x \\
      & \geqslant \frac{1}{2}\sum_{i=1}^2\int_{\Omega}|\eta\partial_x\log(\rho_i) + \partial_x f'(\sigma)|^2\rho_i  \diff x - \sum_{i=1}^2\int_{\Omega} |\partial_x V_i|^2\rho_i \diff x
      \\
      & \geqslant \frac{1}{2}\sum_{i=1}^2\int_{\Omega}|\eta\partial_x\log(\rho_i) + \partial_x f'(\sigma)|^2\rho_i  \diff x - c_1.
   \end{split}
   \end{equation}
   Further, by expanding the square in the third line of Inequality \eqref{eq:fisheryoung}, it follows that 
   \begin{equation}\label{eq:fishersquare}
   \begin{split}
      & \sum_{i=1}^2 \int_{\Omega}|\psi_i^\eta|^2\rho_i \diff x \geqslant \frac{1}{2}\int_{\Omega}|\partial_x f'(\sigma)|^2\sigma  \diff x - c_1.
   \end{split}
   \end{equation}
    Subsequent to Inequality \eqref{eq:fishersquare}, the estimate \eqref{eq:fisher} is derived by Integrating Equality \eqref{eq:gfedi} over the interval $[0,T]$ such that $(\rho_1,\rho_2)$ satisfies the inequality
    \begin{align*}
    \mathscr{F}_\eta[\rho_1(0),\rho_2(0)] & + c_1 T \\
    & = \mathscr{F}_\eta[\rho_1(T),\rho_2(T)] + \sum_{i=1}^2 \int_0^T\int_{\Omega}|\psi_i^\eta|^2 \rho_i \diff x \diff t + c_1 T\\
    & \geqslant \mathscr{F}_\eta[\rho_1(T),\rho_2(T)] + \frac{1}{2}\int_0^T\int_{\Omega}|\partial_x f'(\sigma)|^2\sigma  \diff x \diff t .
    \end{align*}
    Since $f$ satisfies Assumption \ref{I4}, the energy $\mathcal{F}_\eta$ is bounded uniformly from below on $\sP^{ac}(\Omega)\times \sP^{ac}(\Omega)$.
    Indeed, let $c_2,c_3$ denote the infima
    \begin{align*}
    c_2 & \coloneqq \inf_{(\mu_1,\mu_2) \in (\sP(\Omega))^2}\int_{\Omega} \mu_1\log(\mu_1) +  \mu_2\log(\mu_2)\diff x \\
    c_3 & \coloneq \inf_{(\mu_1,\mu_2) \in (\sP(\Omega))^2} \int_{\Omega} V_1 \mu_1 + V_2\mu_2 \diff x.
    \end{align*}
    Then, to conclude, the following inequality is satisfied.
     \begin{align*}
    \mathcal{K}_3 \coloneqq 2(\mathscr{F}_\eta[\rho_1(0),\rho_2(0)] &+ 2\mathcal{K}_{\inf} - c_3 - \eta\min\{c_2,0\} + c_1 T) \\
    & \geqslant \|\partial_x f'(\sigma)\sqrt{\sigma}\|^2_{L^2([0,T]\times\Omega)}.
    \end{align*}

\section{Deriving a BV Estimate}\label{sec:BV}

Equipped with sufficient a priori bounds for the aggregate density, this section works to establish that $(\phi^\eta,b^\eta)$ satisfy Property \ref{B2}.
Then, equipped with Properties \ref{B1}, \ref{B2}, \ref{B3}, the main energy dissipation argument for 
\[
t \mapsto \|\partial_x\phi^\eta(t)\|_{L^1(\Omega)}
\]
is established.

\subsection{Coefficient Bounds}

\begin{proposition}\label{sumrequirements}
    Let $(\rho_1,\rho_2)$ denote a solution of System \eqref{eq:vcdid}.
     Then, there exists $\mathcal{B} \in L^1([0,T])$
     such that the following inequality holds for each $t \in [0,T]$.
    \begin{align}
    &\|\partial_x b^\eta(t)\|_{L^1(\Omega)}\leqslant \mathcal{B}(t)\left(1+ \|\partial_x \phi_\eta(t)\|_{L^1(\Omega)}\right)\label{eq:Breq}.
\end{align} 
In particular, the quantity
\[
\|\mathcal{B}\|_{L^1([0,T])}
\]
depends only on: 
\begin{itemize}
    \item The quantities $\kappa$ and $\alpha$ which were introduced in Assumption \ref{I2}.
    \item The potential norms:
\end{itemize}
 \[
\|V_1\|_{W^{2,1}(\Omega)}, \  \|V_2\|_{W^{2,1}(\Omega)} \text{ and } \|V_1-V_2\|_{W^{3,1}(\Omega)};
\]
\begin{itemize}
     \item and lastly, the following norms:
     \begin{multicols}{2}  
        \item $\|\partial_x f'(\sigma) \sqrt{\sigma}\|_{L^2([0,T]\times\Omega)}$,
     \item $\|\sigma^{1-\alpha}\|_{L^1([0,T];W^{1,1}(\Omega))} $,
    \item $\|\partial_x \log(\sigma)\|_{L^2([0,T]\times\Omega)}$,
        \item $\|\sigma^{1-2\alpha}\|_{L^1([0,T]\times\Omega)}$,
    \end{multicols}
    \end{itemize}
    \begin{itemize}
    \begin{multicols}{2}
      \item $\eta\|\sigma^{2-2\alpha}\|_{L^1([0,T];W^{1,1}(\Omega))} $
    \item $\eta\|\partial_x \sigma^{\frac{1-\alpha}{2}}\|_{L^2([0,T]\times \Omega)} $,
    \newcolumn
     \item $\eta^2\|\partial_x \sigma^{3-3\alpha}\|_{L^1([0,T]\times \Omega)} $
     \item $\eta^2\|\partial_x \sigma^{1-\alpha}\|_{L^2([0,T]\times \Omega)} $,
     \item  $\eta^3\|\partial_x \sigma^{\frac{3-3\alpha}{2}}\|_{L^2([0,T]\times \Omega)} $.
     \end{multicols}
     \end{itemize}
\end{proposition}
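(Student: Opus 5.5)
We differentiate $b^\eta = m^\eta\phi^\eta(1-\phi^\eta)$ in space and use the product rule:
\[
\partial_x b^\eta = \partial_x m^\eta\,\phi^\eta(1-\phi^\eta) + m^\eta(1-2\phi^\eta)\partial_x\phi^\eta .
\]
Since $0\leqslant \phi^\eta\leqslant 1$, this gives
\[
\|\partial_x b^\eta(t)\|_{L^1(\Omega)} \leqslant \|\partial_x m^\eta(t)\|_{L^1(\Omega)} + \|m^\eta(t)\|_{L^\infty(\Omega)}\|\partial_x\phi^\eta(t)\|_{L^1(\Omega)} .
\]
It therefore suffices to set
\[
\mathcal{B}(t) \coloneqq \|\partial_x m^\eta(t)\|_{L^1(\Omega)} + \|m^\eta(t)\|_{L^\infty(\Omega)}
\]
and to show that $\mathcal{B}\in L^1([0,T])$, with a bound depending only on the quantities listed. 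In one dimension $\|m^\eta(t)\|_{L^\infty}\leqslant |\Omega|^{-1}\|m^\eta(t)\|_{L^1}+\|\partial_x m^\eta(t)\|_{L^1}$, so everything reduces to bounding $m^\eta$ in $L^1([0,T];W^{1,1}(\Omega))$.

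We treat the terms of $m^\eta$ in \eqref{eq:Cf}--\eqref{eq:Cfeta} one at a time. The term $\partial_{xx}(V_1-V_2)$ and its derivative are controlled by $\|V_1-V_2\|_{W^{3,1}}$. For the term $\partial_x(V_1-V_2)\big(\xi_\eta(\sigma)\bs{v}_r-\sigma\xi_\eta'(\sigma)\bs{v}_\sigma\big)$ we use \eqref{power:zero}--\eqref{power:one}: $|\xi_\eta(\sigma)|\lesssim\sigma^{1-\alpha}$ and $|\sigma\xi'_\eta(\sigma)|\lesssim \sigma^{1-\alpha}+\eta\sigma^{2-2\alpha}$. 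When this term is differentiated, the derivative falls either on the potentials, which lie in $W^{2,1}$, or on $\xi_\eta(\sigma)$ and $\sigma\xi'_\eta(\sigma)$, or on $\bs{v}_r,\bs{v}_\sigma$. The last case is the dangerous one, because it produces $\partial_x r$. We rewrite $\partial_x r$ through \eqref{eq:dxphi2} as $\tfrac{r}{\phi}\partial_x\phi + r(1-\phi)\partial_x(V_1-V_2)\xi_\eta(\sigma)$; this is where the factor $(1+\|\partial_x\phi^\eta\|_{L^1})$ enters. Alternatively one can bound $\partial_x r$ via Lemma \ref{phir} and absorb the resulting $L^\infty$ coefficient, namely $\sigma^{1-\alpha}$ in $L^\infty_x$, which is controlled by $\|\sigma^{1-\alpha}\|_{W^{1,1}}$, into $\mathcal{B}$. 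The derivatives $\partial_x\xi_\eta(\sigma)$ and $\partial_x(\sigma\xi'_\eta(\sigma))$ are written with $\xi''_\eta$, bounded by \eqref{power:two}, which produces terms like $\partial_x\sigma^{1-\alpha}$ and $\eta\,\partial_x\sigma^{2-2\alpha}$, matching the listed norms. The $\eta$ terms $\eta\,\partial_{xx}(V_1-V_2)\xi_\eta(\sigma)$ and $\eta(1-\phi)(\partial_x(V_1-V_2)\xi_\eta)^2$ are handled in the same way, again producing $\partial_x\phi$ (times an $L^\infty$ factor) together with $\eta\|\sigma^{2-2\alpha}\|_{W^{1,1}}$.

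The main obstacle is the nonlocal terms
\[
\int_0^x \partial_s\big(\xi'_\eta(\sigma)\partial_s(V_1-V_2)\big)\big(\sigma\partial_sf'(\sigma)+\sigma\bs{v}_\sigma+\eta\partial_s\sigma\big)\diff s .
\]
Their $x$-derivative is just the integrand, and their $L^\infty$ norm is bounded by the $L^1$ norm of the integrand, so both pieces of $\mathcal{B}$ reduce to bounding that integrand in $L^1_{t,x}$. Expanding gives $\xi''_\eta(\sigma)\partial_x\sigma\,\partial_x(V_1-V_2)+\xi'_\eta(\sigma)\partial_{xx}(V_1-V_2)$, multiplied by $\sigma\partial_xf'(\sigma)+\sigma\bs{v}_\sigma+\eta\partial_x\sigma$. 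The product $\xi''_\eta(\sigma)\partial_x\sigma\cdot\sigma\partial_xf'(\sigma)=\xi''_\eta f''\sigma^2\,|\partial_x\sigma|^2/\sigma$ is the worst term. Using \eqref{power:two}, its leading part is $(\kappa+1)\sigma^{-2}|\partial_x\sigma|^2=(\kappa+1)|\partial_x\log\sigma|^2$, which is integrable by the $L^2$ bound on $\partial_x\log\sigma$. The higher-order parts $\eta\sigma^{-1-\alpha}|\partial_x\sigma|^2$ and $\eta^2\sigma^{-2\alpha}|\partial_x\sigma|^2$ are rewritten as products $\eta\,\partial_x\log\sigma\cdot\partial_x\sigma^{1-\alpha}$ and $\eta^2\,|\partial_x\sigma^{1-\alpha}|^2$, and are handled by Cauchy--Schwarz with the listed $\eta$-weighted norms. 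The $\eta\partial_x\sigma$ factor generates the $\eta^2,\eta^3$ norms in the same manner. Finally, the cross terms $\xi'_\eta(\sigma)\partial_{xx}(V_1-V_2)\,\sigma\partial_xf'(\sigma)$ are bounded by $\|\partial_{xx}(V_1-V_2)\|_{L^\infty}$ (from $W^{3,1}$) times $|\xi'_\eta|\sqrt\sigma\cdot|\partial_xf'(\sigma)|\sqrt\sigma$. We estimate these by Cauchy--Schwarz with the Fisher bound \eqref{eq:fisher} and $\sigma|\xi'_\eta|^2\lesssim \sigma^{1-2\alpha}+\eta^2\sigma^{3-4\alpha}$; this is where $\|\sigma^{1-2\alpha}\|_{L^1}$ enters. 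The remaining $\eta$ powers are checked against the list by this bookkeeping.
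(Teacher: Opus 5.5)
\textbf{The gap: your reduction produces a term quadratic in $\|\partial_x\phi^\eta\|_{L^1}$.} Your first paragraph reduces everything to bounding $m^\eta$ in $L^1([0,T];W^{1,1}(\Omega))$ by the listed quantities alone. That bound is not available. Differentiating the local terms $\partial_x(V_1-V_2)\big(\xi_\eta(\sigma)\bs{v}_r-\sigma\xi'_\eta(\sigma)\bs{v}_\sigma\big)$ and $\eta(1-\phi^\eta)\big(\partial_x(V_1-V_2)\xi_\eta(\sigma)\big)^2$ produces $\partial_x r$ and $\partial_x\phi^\eta$, as you yourself note two paragraphs later. The best you can get is $\|\partial_x m^\eta(t)\|_{L^1}\leqslant\widetilde{\mathcal{B}}(t)\big(1+\|\partial_x\phi^\eta(t)\|_{L^1}\big)$.

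You then propose $\|m^\eta\|_{L^\infty}\leqslant|\Omega|^{-1}\|m^\eta\|_{L^1}+\|\partial_x m^\eta\|_{L^1}$. With that choice, the second term of your product-rule estimate, $\|m^\eta\|_{L^\infty}\|\partial_x\phi^\eta\|_{L^1}$, becomes of order $\widetilde{\mathcal{B}}(1+\|\partial_x\phi^\eta\|_{L^1})\|\partial_x\phi^\eta\|_{L^1}$, which is quadratic in the energy. The resulting Riccati-type inequality $y'\leqslant\mathcal{B}(1+y)^2$ only controls $y$ while $\int_0^t\mathcal{B}<(1+y(0))^{-1}$. So neither \eqref{eq:Breq} nor the global Gr\"onwall bound of Theorem \ref{fConcludingGronwall} follows.

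\textbf{The missing step.} You need to bound $\|m^\eta(t)\|_{L^\infty(\Omega)}$ by some $\widehat{\mathcal{B}}(t)\in L^1([0,T])$ that involves neither $\partial_x r$ nor $\partial_x\phi^\eta$; this is what the paper does. In the local terms, never differentiate $\bs{v}_r$, $\bs{v}_\sigma$ or $\phi^\eta$. They are bounded pointwise, by $\sum_i\|\partial_xV_i\|_{L^\infty}$ and by $1$ respectively. Only the purely $\sigma$-dependent factors $\xi_\eta(\sigma)$, $\xi_\eta(\sigma)^2$ and $\sigma\xi'_\eta(\sigma)$ go through $W^{1,1}(\Omega)\hookrightarrow L^\infty(\Omega)$. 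This gives $\|\sigma^{1-\alpha}\|_{W^{1,1}}+\eta\|\sigma^{2-2\alpha}\|_{W^{1,1}}$, plus terms you have already estimated.

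For the nonlocal terms, your argument is exactly the paper's. They vanish at $x=0$, so their $L^\infty$ norm is bounded by the $L^1$ norm of their integrand, and that integrand contains only $\sigma$ and the potentials. With $\|m^\eta\|_{L^\infty}\leqslant\widehat{\mathcal{B}}$ in hand, $\mathcal{B}\coloneqq\max\{\widetilde{\mathcal{B}},\widehat{\mathcal{B}}\}$ works.

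\textbf{A smaller bookkeeping point.} For the cross term $\xi'_\eta(\sigma)\partial_{xx}(V_1-V_2)\,\sigma\partial_xf'(\sigma)$ you use Cauchy--Schwarz with $\sigma|\xi'_\eta|^2\lesssim\sigma^{1-2\alpha}+\eta^2\sigma^{3-4\alpha}$. This introduces $\eta^2\|\sigma^{3-4\alpha}\|_{L^1([0,T]\times\Omega)}$, which is not on the list. You can avoid it by using the ODE \eqref{eq:ode}, as the paper does, which gives the identity
\[
\sigma\xi'_\eta(\sigma)\partial_xf'(\sigma)=\big(1+2\eta\xi_\eta(\sigma)\big)\partial_x\log(\sigma)+\xi_\eta(\sigma)\partial_xf'(\sigma).
\]
Young's inequality on the last term yields only the Fisher information and $\kappa^2\|\sigma^{1-2\alpha}\|_{L^1}$. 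The $\eta$-term is bounded by $\tfrac{2\eta\kappa}{1-\alpha}|\partial_x\sigma^{1-\alpha}|$.
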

\begin{proof}
Recall from Definition \ref{coefficients} that $m,m^\eta,b^\eta$ satisfy the following equalities.
 \begin{align*}
      m & = \partial_x(V_1-V_2)(\xi_\eta(\sigma)\bs{v}_r- \sigma \xi_\eta'(\sigma)\bs{v}_\sigma) + \partial_{xx}(V_1-V_2)\\
       & + \int_0^x \partial_s(\xi'_\eta(\sigma)\partial_s(V_1-V_2))(\sigma\partial_s f'(\sigma) + \sigma\bs{v}_\sigma)\diff s,\\
    m^\eta & = m + \eta(\partial_{xx}(V_1-V_2)\xi_\eta(\sigma) + (1-\phi)\left(\partial_x(V_1-V_2)\xi_\eta(\sigma)\right)^2),\\  
    & +  \eta\int_0^x \partial_s(\xi'_\eta(\sigma)\partial_s(V_1-V_2))\partial_s\sigma \diff s \\
       b^\eta & = m^\eta \phi^\eta(1-\phi^\eta).
\end{align*}
Consequent to the complicated structure of the coefficient functions, this proof is split into three steps. 
First, a bound is established for
\[
\|\partial_x m(t)\|_{L^1(\Omega)}.
\]
The next step involves bounding 
\[
\|\partial_x(m^\eta(t)- m(t))\|_{L^1(\Omega)}
\]
and finally, 
a bound is derived for 
\[
\|m^\eta(t)\|_{L^\infty(\Omega)}.
\]
Indeed, differentiating the equation for $m$ yields the equality 
\begin{align*}
    \partial_x m & = \partial_{xxx}(V_1-V_2)\\
    & + \partial_{xx}(V_1-V_2)(\underbrace{\xi_\eta(\sigma)\bs{v}_r}_{\eqqcolon \mathcal{I}_1}+ \underbrace{\sigma \xi'_\eta(\sigma)\partial_xf'(\sigma)}_{\eqqcolon \mathcal{I}_2})\\
    & + \partial_x(V_1-V_2)(\underbrace{\xi_{\eta}(\sigma)\partial_x\bs{v}_r}_{\eqqcolon \mathcal{I}_3}  -  \underbrace{\sigma \xi'_{\eta}(\sigma)\partial_x\bs{v}_\sigma}_{\eqqcolon \mathcal{I}_4}) \\
    & + \partial_x(V_1-V_2)(\underbrace{\partial_x\xi_\eta(\sigma)(\bs{v}_r-\bs{v}_\sigma)}_{\eqqcolon \mathcal{I}_5}) \\
    & + \partial_x(V_1-V_2) 
    (\underbrace{\partial_x\xi'_\eta(\sigma)(\sigma \partial_x f'(\sigma))}_{\eqqcolon \mathcal{I}_6}
\end{align*}
Recall the embedding $W^{k+1,1}(\Omega)\hookrightarrow W^{k,\infty}(\Omega)$.
Thanks to this embedding, the quantities 
\[
\|\partial_{xx}(V_1-V_2)\|_{L^\infty(\Omega)}, \quad \quad  \|\partial_{x}(V_1-V_2)\|_{L^\infty(\Omega)}
\]
may be bounded by a constant, which depends only on $\Omega$, multiplied by
\[
\|V_1-V_2\|_{W^{3,1}(\Omega)}.
\]
This estimate ensures that the drift factors multiplying each of the terms $\mathcal{I}_1,\mathcal{I}_2,\dots, \mathcal{I}_6$ are bounded in $L\infty(\Omega)$.
Then, to obtain the desired bound, the terms $\mathcal{I}_1,\mathcal{I}_2,\dots, \mathcal{I}_6$ are treated in sequence.
To begin the process, recall that the quantities 
\[
\|\bs{v}_r\|_{L^\infty([0,T]\times\Omega)} \text{ and }\|\bs{v}_\sigma\|_{L^\infty([0,T]\times\Omega)}
\]
depend only on the size of the constant 
\[
c_1 \coloneqq \sum_{i=1}^2 \|\partial_x V_i\|_{L^\infty(\Omega)}
\]
As such, $\mathcal{I}_1$ satisfies the inequality
\begin{equation}\label{eq:i1pt1}
 \|\mathcal{I}_1(t)\|_{L^1(\Omega)} \leqslant c_1\|\xi_\eta(\sigma(t))\|_{L^1(\Omega)}.
\end{equation}
Further, recall the zero and first order estimates for the non-linearity $\xi_\eta$ which, thanks to Assumption \ref{I2}, satisfies 
\begin{equation}\label{eq:recallxi1}
            |\xi_\eta(s)| \leqslant \kappa s^{1-\alpha}, \quad 
            |\xi'_\eta(s)| \leqslant (1+\alpha)\kappa s^{-\alpha} + 2\eta\alpha \kappa^2 s^{1-2\alpha}.  
\end{equation}
By using the zero order inequality to bound Equation \eqref{eq:i1pt1} from above, it follows that 
\begin{equation}\label{eq:i1pt2}
 \|\mathcal{I}_1(t)\|_{L^1(\Omega)} \leqslant \kappa c_1\|\sigma(t)^{1-\alpha}\|_{L^1(\Omega)}.
\end{equation}
To bound $\mathcal{I}_2$, recall Inequality \eqref{ord:one}:
\[
 |\xi'_\eta(s)|\leqslant  \frac{|\xi(s)|}{s} + \frac{1}{s^2f''(s)} + \frac{2\eta|\xi(s)|}{s^2f''(s)}.
\]
Subsequently, $\mathcal{I}_2$ satisfies 
\begin{equation}\label{eq:i2pt1}
\begin{split}
    \|\mathcal{I}_2(t)\|_{L^1(\Omega)} & \leqslant \|\xi(\sigma(t))\partial_x f'(\sigma(t))\|_{L^1(\Omega)} + \|\partial_x \log (\sigma(t))\|_{L^1(\Omega)} \\
    & + \eta \|\sigma(t)^{-1}\xi(\sigma(t))\partial_x \sigma(t)\|_{L^1(\Omega)}
\end{split}
\end{equation}
To address the first term on the right hand-side of Inequality \eqref{eq:i2pt1}, Young's inequality is used. 
Then, the zero order Inequality of \eqref{eq:recallxi1} is used to bound the remaining dependence on $\xi$ by a power law.
In particular, 
$\mathcal{I}_2$ satisfies 
\begin{equation}\label{eq:i2pt2}
\begin{split}
    \|& \mathcal{I}_2(t)\|_{L^1(\Omega)} \leqslant \|\sqrt{\sigma(t)}\partial_x f'(\sigma(t))\|_{L^2(\Omega)}^2 + \left\|\frac{\xi(\sigma(t))}{\sqrt{\sigma(t)}}\right\|_{L^2(\Omega)}^2 \\ & + \|\partial_x \log (\sigma(t))\|_{L^1(\Omega)} + \eta \|\sigma(t)^{-1}\xi(\sigma(t))\partial_x \sigma(t)\|_{L^1(\Omega)}\\
    & \leqslant \|\sqrt{\sigma(t)}\partial_x f'(\sigma(t))\|_{L^2(\Omega)}^2 + \kappa^2 \left\|\sigma(t)^{1-2\alpha}\right\|_{L^1(\Omega)} \\ 
    & + \sqrt{|\Omega|}\|\partial_x \log (\sigma(t))\|_{L^2(\Omega)} + \eta\frac{\kappa}{1-\alpha} \|\partial_x \sigma(t)^{1-\alpha}\|_{L^1(\Omega)}
\end{split}
\end{equation}
Using the zero and first order inequalities of \eqref{eq:recallxi1}, there exists $c_2 \coloneqq c_2(\alpha,\kappa) > 0$, depending only on $\kappa,\alpha$, for which the terms $\mathcal{I}_3$ and $\mathcal{I}_4$ satisfy the respective inequalities 
\begin{equation}\label{eq:i3pt1}
\begin{split}
 \|\mathcal{I}_3(t)\|_{L^1(\Omega)} \leqslant c_2 \|\sigma(t)^{1-\alpha}\|_{L^\infty(\Omega)}\|\partial_x \bs{v}_r(t)\|_{L^1(\Omega)}.
\end{split}
\end{equation}
and 
\begin{equation}\label{eq:i4pt1}
\begin{split}
 & \|\mathcal{I}_4(t)\|_{L^1(\Omega)} \\
 & \leqslant c_2\left(\|\sigma(t)^{1-\alpha}\|_{L^\infty(\Omega)} + \eta \|\sigma(t)^{2-2\alpha}\|_{L^\infty(\Omega)}\right) \|\partial_x \bs{v}_\sigma(t)\|_{L^1(\Omega)}.
\end{split}
\end{equation}
Furthermore, recalling that 
 \begin{align*}
        \bs{v}_\sigma &\coloneqq r\partial_xV_1+(1-r)\partial_xV_2, \\
        \bs{v}_r &\coloneqq r\partial_xV_2 + (1-r)\partial_xV_1, 
\end{align*}
there exists a constant $c_3 \coloneqq c_3(V_1,V_2) > 0$, whose size depends only on 
$\|\partial_{x}V_1\|_{W^{1,1}(\Omega)}$ and $\|\partial_{x}V_2\|_{W^{1,1}(\Omega)}$, for which 
\begin{align}
     \|\partial_x\bs{v}_\sigma(t)\|_{L^1(\Omega)} &\leqslant c_3\left( 1+ \| \partial_x r(t)\|_{L^1(\Omega)}\right), \label{eq:dxvsigma} \\
     \|\partial_x\bs{v}_r(t)\|_{L^1(\Omega)} &\leqslant c_3\left( 1+ \| \partial_x r(t)\|_{L^1(\Omega)}\right). 
     \label{eq:dxvr}
\end{align}
Subsequently, utilising Lemma \ref{phir}, the right hand-side of Inequalities \eqref{eq:dxvsigma} and \eqref{eq:dxvr} may be bounded as follows
\begin{equation}\label{eq:dxvrsigma}
    \|\partial_x\bs{v}_\sigma(t)\|_{L^1(\Omega)},    \|\partial_x\bs{v}_r(t)\|_{L^1(\Omega)} \leqslant \Lambda_2c_3\left( 1+ \| \partial_x \phi^\eta(t)\|_{L^1(\Omega)}\right).
\end{equation}
Then, in summing Inequalities \eqref{eq:i3pt1} and \eqref{eq:i4pt1} and using Inequality \eqref{eq:dxvrsigma} as an upper bound, the quantities $\mathcal{I}_3$ and $\mathcal{I}_4$ satisfy
\begin{equation}\label{eq:i34pt2}
\begin{split}
  \|\mathcal{I}_3(t)\|_{L^1(\Omega)} & +  \|\mathcal{I}_4(t)\|_{L^1(\Omega)} \\
  & \leqslant 2c_2c_3\Lambda_2\|\sigma(t)^{1-\alpha}\|_{L^\infty(\Omega)}\left( 1+ \| \partial_x \phi^\eta(t)\|_{L^1(\Omega)}\right)\\
  &  + \eta2c_2c_3\Lambda_2 \|\sigma(t)^{2-2\alpha}\|_{L^\infty(\Omega)}\left( 1+ \| \partial_x \phi^\eta(t)\|_{L^1(\Omega)}\right).
\end{split}
\end{equation}
Lastly concerning $\mathcal{I}_3,\mathcal{I}_4$, thanks to the embedding $W^{1,1}(\Omega)\hookrightarrow L^\infty(\Omega)$ there exists $c_4\coloneqq c_4(\Omega) > 0 $ depending only on $|\Omega|$ for which, when letting $c_5$ denote the constant $c_5 \coloneqq 2c_2c_3c_4\Lambda_2$, the following inequality is satisfied
\begin{equation}\label{eq:i34pt3}
\begin{split}
  & \|\mathcal{I}_3(t)\|_{L^1(\Omega)} +  \|\mathcal{I}_4(t)\|_{L^1(\Omega)} \\
  & \leqslant c_5\left(\|\sigma(t)^{1-\alpha}\|_{W^{1,1}(\Omega)}+ \eta \|\sigma(t)^{2-2\alpha}\|_{W^{1,1}(\Omega)}\right)\left( 1+ \| \partial_x \phi^\eta(t)\|_{L^1(\Omega)}\right).
\end{split}
\end{equation}
Next, thanks to the first order Inequality in \eqref{eq:recallxi1}, there exists $c_6\coloneqq c_6(\alpha,\kappa) > 0$, depending only on $\kappa,\alpha$, such that $\mathcal{I}_5$ satisfies 
\begin{equation}\label{eq:i5pt1}
    \|\mathcal{I}_5(t)\|_{L^1(\Omega)} \leqslant c_1c_6\left(\|\partial_x\sigma(t)^{1-\alpha}\|_{L^1(\Omega)}+ \eta \|\partial_x\sigma(t)^{2-2\alpha}\|_{L^1(\Omega)} \right).
\end{equation}
To bound $\mathcal{I}_6$ recall the second order inequality 
\begin{align}
            |\xi_\eta''(s) f''(s) s^2| \leqslant (\kappa+1)s^{-1} + 2\eta \kappa(2+ \alpha + \kappa)s^{-\alpha} + 4\alpha\eta^2 \kappa^2s^{1-2\alpha}.\label{eq:recallxi2}
\end{align}
Then, there exists there exist $c_7 \coloneqq c_7(\alpha,\kappa) > 0$, depending only on $\kappa,\alpha$, such that $\mathcal{I}_6$ satisfies the inequality 
\begin{equation}\label{eq:i6pt1}
\begin{split}
  & \|\mathcal{I}_6(t) \|_{L^1(\Omega)} =
    \|(\partial_x\sigma(t))^2 \sigma f''(\sigma(t)) \xi_\eta''(\sigma(t))\|_{L^1(\Omega)} 
    \\
    & \leqslant c_7\|\partial_x\log(\sigma(t))\|_{L^2([0,T]\times\Omega)}^2\\
    & + c_7\left(\eta \|\partial_x\sigma(t)^{\frac{1-\alpha}{2}}\|^2_{L^2([0,T]\times\Omega)}+ \eta^2 \|\partial_x\sigma(t)^{1-\alpha}\|_{L^2([0,T]\times\Omega)}^2\right).  
\end{split}
\end{equation}
Having now bounded the quantities $\mathcal{I}_1,\mathcal{I}_2,\dots, \mathcal{I}_6$ as presented above, the bounds are collected, aggregating the dependence of the constants $c_1,\dots,c_7$ and $\Lambda_2$. 
In particular, there exists a constant $C_1$, whose size depends only on $\kappa,\alpha,\Omega$ and the norms 
\[
\|V_1-V_2\|_{W^{3,1}(\Omega)}, \quad \|V_1\|_{W^{2,1}(\Omega)}, \quad \|V_2\|_{W^{2,1}(\Omega)}
\]
such that 
\begin{equation}\label{eq:dxc}
\begin{split}
    & \| \partial_x m(t)\|_{L^1(\Omega)} \leqslant C_1\left(\|\sqrt{\sigma(t)}\partial_x f'(\sigma(t))\|_{L^2(\Omega)}^2 + \left\|\sigma(t)^{1-2\alpha}\right\|_{L^1(\Omega)} \right)\\ 
 & +C_1\left(\|\sigma(t)^{1-\alpha}\|_{W^{1,1}(\Omega)}+ \eta \|\sigma(t)^{2-2\alpha}\|_{W^{1,1}(\Omega)}\right)\left( 1+ \| \partial_x \phi^\eta(t)\|_{L^1(\Omega)}\right)  \\
 & + C_1\left(\|\partial_x\log(\sigma)\|_{L^2([0,T]\times\Omega)}^2 + \eta \|\partial_x\sigma^{\frac{1-\alpha}{2}}\|^2_{L^2([0,T]\times\Omega)}\right) \\
 & +  C_1\left(\eta^2 \|\partial_x\sigma^{1-\alpha}\|_{L^2([0,T]\times\Omega)}^2 + 1\right).
\end{split}
\end{equation}
After considering integration over the time domain $[0,T]$, all of the terms on the right hand-side of Inequality \eqref{eq:dxc} may suitably be bounded by the norms listed in the statement of the proposition. 

\medskip 

The proof now attends to a bound for the derivative of
\[
\frac{m^\eta - m}{\eta}
\]
which satisfies the equality 
\begin{align*}
    \partial_x& \left(\frac{m^\eta - m}{\eta}\right)  = \partial_{xxx}(V_1-V_2)\underbrace{\xi_\eta(\sigma)}_{\eqqcolon \mathcal{N}_1}+ 2\partial_{xx}(V_1-V_2)\underbrace{\partial_x\xi_\eta(\sigma)}_{\eqqcolon \mathcal{N}_2} \\
    & + (\partial_x(\partial_x(V_1-V_2)^2)(1-\phi)-\partial_x(V_1-V_2)^2\partial_x \phi)\underbrace{\xi_\eta^2(\sigma)}_{\eqqcolon \mathcal{N}_3}\\
    & + (1-\phi)\partial_x(V_1-V_2)^2\underbrace{\partial_x(\xi_\eta(\sigma)^2)}_{\eqqcolon \mathcal{N}_4}\\
    & + \partial_x(V_1-V_2)\underbrace{\partial_x\xi_\eta'(\sigma)\partial_x\sigma}_{\eqqcolon \mathcal{N}_5}.
\end{align*}
To obtain the desired bound, the terms $\mathcal{N}_1,\mathcal{N}_2,\dots, \mathcal{N}_5$ are once more treated in sequence.
First, recall from the previous calculations concerning $\mathcal{I}_1$ and $\mathcal{I}_5$ that 
\begin{align}
\|\mathcal{N}_1(t)\|_{L^1(\Omega)} & \leqslant \kappa \|\sigma(t)^{1-\alpha}\|_{L^1(\Omega)} \label{eq:j1pt1},\\
\|\mathcal{N}_2(t)\|_{L^1(\Omega)} & \leqslant c_6\left(\|\partial_x\sigma(t)^{1-\alpha}\|_{L^1(\Omega)}  + \eta \|\partial_x\sigma(t)^{2-2\alpha}\|_{L^1(\Omega)} \right).\label{eq:j2pt1}
\end{align}
Further, from the Sobolev embedding $W^{1,1}(\Omega)\hookrightarrow L^\infty(\Omega)$, $\mathcal{N}_3$ and $\mathcal{N}_4$ satisfy
\eqref{eq:recallxi1}, 
\begin{align*}
\|\mathcal{N}_3(t)\|_{L^\infty(\Omega)} & \leqslant c_4\left(\|\mathcal{N}_4(t)\|_{L^1(\Omega)} + \|\xi_\eta(\sigma(t))^2\|_{L^1(\Omega)}\right) \\
& \leqslant  c_4\left(\|\mathcal{N}_4(t)\|_{L^1(\Omega)} + \kappa^2\|\sigma(t)^{2-2\alpha}\|_{L^1(\Omega)}\right).
\end{align*}
Then, expanding the derivative in $\mathcal{N}_4$, and using the zero and first inequalities in \eqref{eq:recallxi1} there exists $c_8 \coloneqq c_8(\alpha,\kappa)> 0$, depending only on $\alpha,\kappa$, for which the following inequality is satisfied.
\begin{equation}\label{eq:j4pt1}
    \begin{split}
        \|&\mathcal{N}_4(t)\|_{L^1(\Omega)} \leqslant 2\|\partial_x\sigma(t) \xi_\eta'(\sigma(t))\xi_\eta(\sigma(t))\|_{L^1(\Omega)}\\
        & \leqslant c_{8}\left( \|\partial_x\sigma(t)^{2-2\alpha}\|_{L^1(\Omega)} + \eta \|\partial_x\sigma(t)^{3-3\alpha}\|_{L^1(\Omega)}\right).
    \end{split}
\end{equation}
Lastly, the quantity $\mathcal{N}_5$ is bounded using Inequality \eqref{eq:recallxi2} in conjunction with Assumption \ref{I2} which asserts that 
\[
\frac{1}{\alpha \kappa} s^\alpha \leqslant s^2 f''(s).
\]
In particular, there exists $c_9\coloneqq c_9(\alpha,\kappa)$, depending only on $\kappa,\alpha$, for which $\xi''$ satisfies the inequality
\[
\left|\xi_\eta''(s)\right| \leqslant c_9\left(s^{-1-\alpha} + s^{-2\alpha} +s^{1-3\alpha}\right).
\]
Hence, there further exists $c_{10}\coloneqq c_{10}(\alpha,\kappa)$, depending only on $\kappa$ and $\alpha$, for which $\mathcal{N}_5$ satisfies the inequality
\begin{equation}
\begin{split}
     \|\mathcal{N}_5(t)\|_{L^1(\Omega)} & \leqslant c_{10}\|\partial_x \sigma(t) ^\frac{1-\alpha}{2}\|_{L^2(\Omega)}^2 \\ 
    & + c_{10}\left(\eta \|\partial_x \sigma(t) ^{1-\alpha}\|_{L^2(\Omega)}^2 +  \eta^2 \|\partial_x \sigma(t)^\frac{3-3\alpha}{2}\|_{L^2(\Omega)}^2\right).
    \end{split}
\end{equation}
In collecting the inequalities satisfied by $\mathcal{N}_1,\mathcal{N}_2,\dots, \mathcal{N}_5$ and aggregating the dependence on the constants $c_4,c_6,c_8,c_{10}$, it follows that there exists a constant $C_2$, whose size depends only on $\kappa,\alpha,\Omega$ and the norm
\[
\|V_1-V_2\|_{W^{3,1}(\Omega)}
\]
such that the following equality is satisfied.
\begin{equation}\label{eq:dxceta}
\begin{split}
    & \left\| \partial_x \left(m^\eta - m\right)\right\|_{L^1(\Omega)} \leqslant \eta C_2\left(\|\sigma(t)^{1-\alpha}\|_{W^{1,1}(\Omega)} +  \|\partial_x \sigma(t) ^\frac{1-\alpha}{2}\|_{L^2(\Omega)}^2\right)\\ 
    & + \eta^2 C_2\left(\|\partial_x \sigma(t) ^{1-\alpha}\|_{L^2(\Omega)}^2 +  \eta \|\partial_x \sigma(t)^\frac{3-3\alpha}{2}\|_{L^2(\Omega)}^2\right)\\
    & + \eta C_2\|\sigma(t)^{2-2\alpha}\|_{W^{1,1}(\Omega)} \left( 1+ \| \partial_x \phi^\eta(t)\|_{L^1(\Omega)}\right)\\
    & + \eta ^2C_2 \|\partial_x \sigma(t)^{3-3\alpha}\|_{L^1(\Omega)}  \left( 1+ \| \partial_x \phi^\eta(t)\|_{L^1(\Omega)}\right).
\end{split}
\end{equation}
After considering integration over the time domain $[0,T]$, all of the terms on the right hand-side of Inequality \eqref{eq:dxceta} may suitably be bounded by the norms listed in the statement of proposition. 
As such, the proof now turns it's attention to bounding $m^\eta$ in $L^\infty(\Omega)$.

\medskip 

To avoid repeating steps of the proof, it is first acknowledged that each of the integral terms which feature in Equalities \eqref{eq:Cf} and \eqref{eq:Cfeta} has already been shown, in the bounding of $m^\eta$, to have a spatial derivative which is suitably bounded in $L^1([0,T];L^1(\Omega))$.

\medskip 

Further, any function $\varphi \in W^{1,1}(\Omega)$ which vanishes at $x = 0$ satisfies the inequality 
\begin{equation}\label{eq:acineq}
  \sup_{x\in \Omega}|\varphi(x)| \leqslant \sup_{x\in \Omega} \left(\int_0^x |\varphi'(s)| \diff s \right) = \|\varphi'\|_{L^1(\Omega)}.
\end{equation}
Since each integral term vanishes at $x= 0$, it follows from the $L^1([0,T];L^1(\Omega))$ bound obtained for the derivative, used in conjunction with Inequality \eqref{eq:acineq}
that each integral term is appropriately bounded in $L^1([0,T];L^\infty(\Omega))$.
Inspecting the formula for $m^\eta$, it is then left to establish an $L^1([0,T];L^\infty(\Omega))$ control for the following two quantities.
\begin{align*}
    \mathcal{Q}_1 & \coloneqq \partial_x(V_1-V_2)(\xi_\eta(\sigma)\bs{v}_r- \sigma \xi_\eta'(\sigma)\bs{v}_\sigma) + \partial_{xx}(V_1-V_2) \\
    \mathcal{Q}_2 & \coloneqq \eta(\partial_{xx}(V_1-V_2)\xi_\eta(\sigma) + (1-\phi)\left(\partial_x(V_1-V_2)\xi_\eta(\sigma)\right)^2).
\end{align*}
Thanks to the embedding $W^{1,1}(\Omega)\hookrightarrow L^\infty(\Omega)$ there exists a constant $c_{11} \coloneq c_{11}(\Omega,V_1,V_2)$, which depends only on
$|\Omega|$ and the norms 
\[
\|V_1-V_2\|_{W^{3,1}(\Omega)}, \quad \|V_1\|_{W^{2,1}(\Omega)}, \quad \|V_2\|_{W^{2,1}(\Omega)}
\]
such that the quantities $\mathcal{Q}_1,\mathcal{Q}_2$ satisfy
\begin{align*}
& \|\mathcal{Q}_1(t)\|_{L^\infty(\Omega)} + \|\mathcal{Q}_2(t)\|_{L^\infty(\Omega)} \\
& \leqslant c_{11}\left(1 +  \|\sigma (t)\xi'_\eta(\sigma(t))\|_{L^\infty(\Omega)}\right)\\
& + c_{11}\left( \|(\xi_\eta(\sigma(t)))^2\|_{W^{1,1}(\Omega)} +\| \xi_\eta(\sigma(t))\|_{W^{1,1}(\Omega)}\right)\\
& =  c_{11}\left(1 + \|\sigma(t) \xi'_\eta(\sigma(t))\|_{L^\infty(\Omega)} + \sum_{i=1}^4\|\mathcal{N}_i(t)\|_{L^1(\Omega)}\right).
\end{align*}
Recalling Inequalities \eqref{eq:j1pt1}, \eqref{eq:j2pt1} and \eqref{eq:j4pt1}, it has previously been shown that each of the terms $\|\mathcal{N}_i(t)\|_{L^1(\Omega)}, i \in \{1,2,3,4\}$ are bounded appropriately -- that is, in an $L^1([0,T])$ sense and by the quantities listed in the statement of the proposition. 
Hence, it is left to derive a bound for
\[
\|\sigma(t)\xi'_\eta(\sigma(t))\|_{L^\infty(\Omega)} 
\]
which thanks to the first order inequality \eqref{eq:recallxi1} admits a constant $c_{12}\coloneqq c_{12}(\alpha,\kappa)$, depending only on $\kappa,\alpha$, for which 
\begin{align*}
  & \|\sigma(t)\xi'_\eta(\sigma(t))\|_{L^\infty(\Omega)} \\
& \leqslant 
c_{12}\left(\|\sigma(t)^{1-\alpha}\|_{L^\infty(\Omega)}  + \eta \|\sigma(t)^{2-2\alpha}\|_{L^\infty(\Omega)} \right)\\
& \leqslant c_4 c_{12}\left(\|\sigma(t)^{1-\alpha}\|_{W^{1,1}(\Omega)}  + \eta \|\sigma(t)^{2-2\alpha}\|_{W^{1,1}(\Omega)}\right).  
\end{align*}
To conclude the proof, recognise that $\partial_x b^\eta $ satisfies the inequality 
\[
\|\partial_x b^\eta (t)\|_{L^1(\Omega)} \leqslant \|\partial_x m^\eta (t)\|_{L^1(\Omega)} +\|m^\eta (t)\|_{L^\infty(\Omega)} \|\partial_x \phi (t)\|_{L^1(\Omega)}.
\]
Then, in collecting Inequalities \eqref{eq:dxc} and \eqref{eq:dxceta}, it follows that 
\[
\|\partial_x m^\eta (t)\|_{L^1(\Omega)} \leqslant  \widetilde{\mathcal{B}}(t)\left(1+ \|\partial_x \phi_\eta(t)\|_{L^1(\Omega)}\right)
\]
where, in particular, the size of 
\[
\|\widetilde{\mathcal{B}}\|_{L^1([0,T])}
\]
depends only on the size of the quantities and norms which were listed in the statement of the proposition.

\medskip 

Similarly, it follows from the above discussion concerning the boundedness of $m^\eta$ in $L^\infty(\Omega)$ that 
\[
\|m^\eta (t)\|_{L^\infty(\Omega)} \leqslant  \widehat{\mathcal{B}}(t)
\]
where, in particular, the size of the quantity
\[
\|\widehat{\mathcal{B}}\|_{L^1([0,T])}
\]
depends only on the size of the quantities and norms which were listed in the statement of the proposition.
By letting $\mathcal{B}(t) \coloneqq \max\{\widehat{\mathcal{B}}(t), \widetilde{\mathcal{B}}(t)\}$ the proof is concluded.
\end{proof}

\begin{remark}\label{remark:bounds}
    In the case that $f$ satisfies Hypothesis \ref{hypothesis1} with a power law structure the dependence of $\|\mathcal{B}\|_{L^1([0,T])}$ on the quantity 
    \[
    \|\sigma^{1-2\alpha}\|_{L^1([0,T]\times\Omega)}
    \]
    may be removed. 
    Indeed, if $f$ satisfies Hypothesis \ref{hypothesis1} then there exists $\hat{\lambda} \in \R$ for which the following equality is satisfied
    \[
    \sigma \xi'(\sigma) \partial_x f'(\sigma) = \hat{\lambda}\partial_x\log(\sigma).
    \]
    This means that the term $\mathcal{I}_2$ may be bounded more sharply, satisfying the inequality 
    \[
    \|\mathcal{I}_2(t)\|_{L^1(\Omega)} \leqslant \|\partial_x\log(\sigma(t))\|_{L^1(\Omega)} + \mathcal{O}(\eta).
    \]
    On the other hand, if $f$ coincides with the logarithmic pressure law, then the bound of $\mathcal{B}$ is asymptotically independent of any bound on $\sigma$ since $\xi$ is a constant.
\end{remark}
\subsection{The Energy Dissipation Inequality}
Having already made the calculations with which to establish sufficient coefficient bounds for $b^\eta$, the subsequent energy dissipation argument with which to establish $L^\infty([0,T];BV(\Omega))$ estimates for $r$.

  \begin{thm}\label{fConcludingGronwall}
    Let $\eta \in I$ and let $(\rho_1,\rho_2)$ denote a classical solution of System \eqref{eq:vcdid}. 
    Then there exists $\mathcal{K}_4 > 0$
    for which the following inequality holds.
    \[
   \|\partial_x r\|_{L^\infty([0,T];L^1(\Omega))} < \mathcal{K}_4.
    \]
    In particular, the size of $\mathcal{K}_4$ depends only on 
    whose size depends only on $T$ in addition to:
    \begin{itemize}
        \item the size of $\|\partial_xr_0\|_{L^1(\Omega)}$;
        \item the size of $\|\mathcal{B}\|_{L^1([0,T])}$, introduced in Proposition \ref{sumrequirements};
        \item the Lipschitz constant of $V_1-V_2$.
    \end{itemize} 
\end{thm}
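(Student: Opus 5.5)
The plan is to run the energy argument of the introduction rigorously on $\phi^\eta$, which satisfies System \eqref{eq:phievolution} by Proposition \ref{phievolution}. Then we transfer the resulting bound to $r$ via Lemma \ref{phir}. Since $\phi^\eta \in C([0,T];C^3(\overline{\Omega}))\cap C^1([0,T];C^1(\overline{\Omega}))$, we may differentiate \eqref{eq:phievolution} in $x$ and obtain, with $u \coloneqq \partial_x\phi^\eta$,
\[
\partial_t u = \eta\partial_{xx}u + \partial_x(a^\eta u) + \partial_x b^\eta \quad \text{in } (0,T)\times\Omega, \qquad u = 0 \text{ on } (0,T)\times\partial\Omega.
\]
To avoid the non-smoothness of $|\cdot|$, I would use a convex regularisation $\beta_\delta(u)\coloneqq\sqrt{u^2+\delta^2}$ and compute $\partial_t\int_\Omega \beta_\delta(u)\diff x$. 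There are three contributions.

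First, the viscous term. Integrating by parts gives $\eta[\beta_\delta'(u)\partial_x u]_{\partial\Omega} - \eta\int_\Omega\beta_\delta''(u)|\partial_x u|^2\diff x$. The interior part is non-positive; this is the Kato-type inequality. The boundary term requires care, because only $u=0$ is known on $\partial\Omega$, not $\partial_x u$. However $\beta_\delta'(0)=0$, so the boundary term vanishes.

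Second, the transport term. We have $\int_\Omega \beta_\delta'(u)\partial_x(a^\eta u)\diff x = [\beta_\delta'(u)a^\eta u]_{\partial\Omega} - \int_\Omega \beta_\delta''(u)\partial_x u\, a^\eta u\diff x$. The boundary part vanishes because $u=0$ there. The interior part is bounded by $\int_\Omega |\partial_x a^\eta|\,|u\beta_\delta'(u)-\beta_\delta(u)|\diff x$ after writing $\beta_\delta''(u)u\,\partial_x u=\partial_x(u\beta'_\delta(u)-\beta_\delta(u))$ and integrating by parts once more. This bound is $\leqslant \delta\|\partial_x a^\eta(t)\|_{L^1}$ plus boundary terms of size $\delta|a^\eta|$. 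For fixed $\eta$ the solution is classical, so these are finite and disappear as $\delta\to0$.

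Third, the source term is bounded by $\|\partial_x b^\eta(t)\|_{L^1(\Omega)}$.

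Integrating in time and letting $\delta\to0$ gives, for every $t$,
\[
\|\partial_x\phi^\eta(t)\|_{L^1}\leqslant \|\partial_x\phi^\eta_0\|_{L^1}+\int_0^t\mathcal{B}(s)\bigl(1+\|\partial_x\phi^\eta(s)\|_{L^1}\bigr)\diff s.
\]
Here we use Proposition \ref{sumrequirements}. The integral form of Gr\"onwall then gives $1+\|\partial_x\phi^\eta(t)\|_{L^1}\leqslant(1+\|\partial_x\phi^\eta_0\|_{L^1})\exp(\|\mathcal{B}\|_{L^1([0,T])})$, which is exactly \eqref{eq:gronwall}.

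To conclude, we apply Lemma \ref{phir} twice. At $t=0$, \eqref{eq:phir2} bounds $\|\partial_x\phi^\eta_0\|_{L^1}$ by $\Lambda_1(\|\partial_x r_0\|_{L^1}+1)$. At time $t$, \eqref{eq:rphi2} bounds $\|\partial_x r(t)\|_{L^1}$ by $\Lambda_2(\|\partial_x\phi^\eta(t)\|_{L^1}+1)$. The constants $\Lambda_i$ depend only on the Lipschitz constant of $V_1-V_2$ (through $\kappa,\alpha,|\Omega|$, and the mass). Taking the supremum over $t\in[0,T]$ yields
\[
\mathcal{K}_4=\Lambda_2\Bigl(\bigl(\Lambda_1(\|\partial_x r_0\|_{L^1}+1)+1\bigr)e^{\|\mathcal{B}\|_{L^1([0,T])}}+1\Bigr),
\]
which has the stated dependence.

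The main obstacle I expect is the rigorous handling of the boundary in the dissipation computation. One must check that $\partial_x\phi^\eta=0$ on $\partial\Omega$ really kills every boundary contribution from both the viscous and transport terms. One must also check that the $\delta$-regularisation limit is legitimate, given the regularity available up to $t=0$ and up to the boundary. If the $\beta_\delta$ argument for the transport term becomes awkward, I would instead work directly with $\sgn(u)$ on the open set $\{u\neq0\}$, using that $u=0$ on $\partial\{u\ne0\}$ where that set meets the interior. A second check is that $\mathcal{B}$ is controlled uniformly in $\eta$. This follows from Theorems \ref{sumestimates}, \ref{reciprocalestimates} and Proposition \ref{gfedi}, although the present statement only requires the dependence on $\|\mathcal{B}\|_{L^1}$.
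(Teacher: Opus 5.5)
Your proposal is correct and follows the paper's argument. Both proceed by an $L^1$ energy estimate for $\partial_x\phi^\eta$ using the boundary condition $\partial_x\phi^\eta=0$, then Gr\"onwall with the function $\mathcal{B}$ of Proposition \ref{sumrequirements}, then transfer to $r$ via Lemma \ref{phir} at $t=0$ and at time $t$.

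The differences are only technical. The paper differentiates $\int_\Omega|\partial_x\phi^\eta|\diff x$ directly (Proposition \ref{energypropn}), uses Kato's inequality plus a minimum-at-the-boundary argument for the third-order term, and integrates the transport term $\partial_x(a^\eta|\partial_x\phi^\eta|)$ to zero. You instead use the regularisation $\sqrt{u^2+\delta^2}$: its convexity and $\beta_\delta'(0)=0$ make the viscous contribution non-positive with no boundary term, the transport term leaves only an $O(\delta)$ error that vanishes for fixed $\eta$, and you close with the integral form of Gr\"onwall. This is a slightly cleaner justification of the same signs.
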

\begin{proof}
    Since $\partial_x \phi^\eta \in C^1([0,T];C(\overline{\Omega}))$, it follows from (see Proposition \ref{energypropn}) that the map
    \[
    t\mapsto \int_{\Omega} |\partial_x \phi^\eta|(t) \diff x
    \]
    is differentiable for almost every $t \in [0,T]$ and, for almost every such $t \in [0,T]$ the interchange of integration and differentiation is justified.
    Subsequently, the following inequality holds for almost every $t \in [0,T]$.
    \begin{equation}\label{eq:phiedi}
    \begin{split}
       \partial_t & \int_{\Omega} |\partial_x \phi^\eta| \diff x = \int_{\Omega}   \sgn(\partial_x \phi^\eta)\partial_x \partial_t \phi^\eta\diff x \\
       = & \int_{\Omega} \sgn(\partial_x \phi^\eta)\partial_{xxx}\phi^\eta + \partial_x (a^\eta|\partial_x\phi^\eta|) + \sgn(\partial_x \phi^\eta)\partial_x b^\eta \diff x. \\
    \end{split}
    \end{equation}

     The first summand featured in the second line of Equality \eqref{eq:phiedi} is addressed by applying Kato's inequality (see \cite[Lemma A]{Kato1972} and \cite{brezisKato}). 
    In particular, recall from Proposition \ref{phievolution} that $\phi \in C([0,T];C^3(\overline{\Omega}))$. 
    Since $\partial_{xxx}\phi^\eta \in C([0,T];C(\overline{\Omega}))$, it follows that, for each $t \in [0,T]$ the inequality 
    \begin{equation}\label{eq:Katoapplied}
         \sgn(\partial_x \phi^\eta)\partial_{xxx}\phi^\eta  \leqslant \partial_{xx}(|\partial_x\phi^\eta|)  
    \end{equation}
    is satisfied in the sense of distributions. 
    Moreover, it follows from Inequality \eqref{eq:Katoapplied} that
    \[
    \int_{\Omega} \sgn(\partial_x \phi^\eta)\partial_{xxx}\phi^\eta \diff x \leqslant \int_{\partial\Omega} \partial_x(|\partial_x\phi^\eta|) \cdot \nu \diff x \leqslant 0.
    \]
    In particular, the one-sided derivative $ \partial_x(|\partial_x\phi^\eta|)$ must point in the opposite direction to the outward pointing normal on the boundary and hence the product 
    \[
    \partial_x(|\partial_x\phi^\eta|)\cdot \nu 
    \]
    must achieve a value less than or equal to zero on $\partial\Omega$. 
    This is because the value of $|\partial_x \phi^\eta|$ on $\partial\Omega$ must coincide with the minimum of $|\partial_x \phi^\eta|$ over $\Omega$ due to the boundary condition $\partial_x\phi^\eta = 0$ on $(0,T)\times\partial\Omega$.
    
    \medskip 

    Since the second summand featured in the second line of Equality \eqref{eq:phiedi} is a total derivative, the equality 
    \[
    \int_{\Omega} \partial_x(a|\partial_x\phi^\eta|) \diff x =   \int_{\partial \Omega} a|\partial_x\phi^\eta| \cdot\nu \diff x  = 0
    \]
    is satisfied. 
    Such an equality is recognised by integrating by parts and once more, utilising the boundary condition $\partial_x\phi^\eta = 0$ on $(0,T)\times\partial\Omega$. 

    \medskip 
    
    After addressing the first and second summands in the second line of Equality \eqref{eq:phiedi}, it follows that 
    \begin{equation}\label{eq:phiedi2}
           \partial_t \int_{\Omega} |\partial_x \phi^\eta| \diff x \leqslant \int_{\Omega} |\partial_x b^\eta| \diff x.
    \end{equation}
    To address the final term, recall Inequality \eqref{eq:Breq}.
    By putting together Inequalities \eqref{eq:phiedi2} and \eqref{eq:Breq}, it follows that 
    \begin{equation}\label{eq:fpregronwall}
    \partial_t \int_{\Omega} |\partial_x \phi| \diff x \leqslant \mathcal{B}\left( 1 + \int_{\Omega} |\partial_x \phi| \diff x\right).
    \end{equation}
    
    \medskip
    Moreover, applying Gr\"onwall's Lemma to Inequality \eqref{eq:fpregronwall} establishes a bound for $|\partial_x\phi|$ through the following inequality, which holds for each $t \in [0,T]$.
    \begin{equation}\label{eq:fgronwall}
    \|\partial_x \phi(t) \|_{L^1(\Omega)}+1 \leqslant \exp\left(\int_0^t\mathcal{B}(s) \diff s \right) \left( 1 + \|\partial_x \phi_0\|_{L^1(\Omega)}\right).
    \end{equation}
    To conclude, recall from Lemma \ref{phir} that there exist $\Lambda_1, \Lambda_2> 0 $ for which 
    \begin{align*}
     \|\partial_x\phi_0\|_{L^1(\Omega)} & \leqslant \Lambda_1(\|\partial_x r_0 \|_{L^1(\Omega)}+1), \\\
    \|\partial_x r\|_{L^\infty([0,T];L^1(\Omega))} & \leqslant  \Lambda_2(\|\partial_x\phi \|_{L^\infty([0,T];L^1(\Omega))}+1 ).
    \end{align*}
    By substituting each of the above inequalities within Inequality \eqref{eq:fgronwall}, the control on $|\partial_x \phi|$ is transformed into a control on $|\partial_x r|$ which thus satisfies the following estimate.
     \begin{align*}
        \|\partial_x r& \|_{L^\infty([0,T];L^1(\Omega))}  \\
        & \leqslant \Lambda_2 \left(1 + \Lambda_1\left(1+ \|\partial_x r_0 \|_{L^1(\Omega)}\right) \right)\exp\left(\|\mathcal{B}\|_{L^1([0,T])}\right)\eqqcolon \mathcal{K}_4.
     \end{align*}
    In particular, it was possible to choose the size of the constants $\Lambda_1,\Lambda_2$ to depend only on the Lipschitz constant of $V_1-V_2$.
    \end{proof}
\section{Convergence to the Limiting System}\label{sec:convergence}
Equipped with the fundamental $L^\infty([0,T];BV(\Omega))$ estimate for $r$, along with Sobolev estimates for non-linear functions of $\sigma$, this section establishes the necessary compactness for the approximating system, facilitating the extraction of a subsequence which is shown to converge to a weak solution of System \eqref{eq:cdid}.

\medskip 

Firstly, however, it is proven that:
\begin{itemize}
    \item There exists $\theta_\alpha > 1$ such that $\partial_x \sigma$ is bounded in $L^{\theta_\alpha}_{t,x}$, uniformly in $\eta$.
    \item There exists $s > 0$ such that $\partial_t \rho_i$ is bounded in $L^2([0,T];H^{-s}(\Omega))$, uniformly in $\eta$.
\end{itemize}

\begin{proposition}\label{falpreparation}
        Let $\eta \in I $ and let $(\rho_1,\rho_2)$ denote a classical solution of System \eqref{eq:vcdid} and, if $f$ satisfies Hypothesis \ref{hypothesis2}, recall the constants $\kappa$ and $\alpha$.
        Then, there exists $\mathcal{K}_5 > 0$ and $\theta_\alpha > 1$, whose sizes depend only on $\alpha, \kappa, |\Omega|,T$, and the Lipschitz constants of the potentials, such that 
        \begin{equation}\label{eq:dxsigmabound2}
        \|\partial_x \sigma \|_{L^{\theta_\alpha}([0,T]\times\Omega)} \leqslant \mathcal{K}_5.
        \end{equation}
\end{proposition}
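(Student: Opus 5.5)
We bound $\partial_x\sigma$ in $L^{\theta_\alpha}_{t,x}$ by writing it as a product of two factors that Theorems \ref{sumestimates} and \ref{reciprocalestimates} already control, and then applying Hölder's inequality. For any $q\neq 0$,
\[
\partial_x\sigma = \frac{2}{q}\,\sigma^{1-\frac{q}{2}}\,\partial_x\sigma^{\frac{q}{2}}.
\]
We take $q = p+\alpha-1$ with $p\in(1-\alpha,1)$ chosen close to $1$, so that $q\in(0,\alpha)$. By Theorem \ref{sumestimates} with $\theta=1$, which only uses $\rho_{i,0}\in\sP(\Omega)$, the factor $\partial_x\sigma^{q/2}$ is bounded in $L^2([0,T]\times\Omega)$ uniformly in $\eta$. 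The constant depends only on $\alpha,\kappa,|\Omega|,T$ and the Lipschitz constants of the potentials, which matches the dependence claimed in the statement. For the logarithmic pressure ($\alpha=1$ formally) the same splitting is used with $\partial_x\log(\sigma)$ or $\partial_x\sigma^{p/2}$ in place of $\partial_x\sigma^{q/2}$.

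Next we estimate the weight $\sigma^{1-q/2}$. By Hölder's inequality with exponents $2$ and $2\theta/(2-\theta)$, for $\theta\in(1,2)$,
\[
\|\partial_x\sigma\|_{L^\theta_{t,x}}\leqslant \frac{2}{q}\,\big\|\partial_x\sigma^{q/2}\big\|_{L^2_{t,x}}\,\big\|\sigma^{1-\frac q2}\big\|_{L^{\frac{2\theta}{2-\theta}}_{t,x}}.
\]
So it suffices to bound $\sigma$ in $L^{s}_{t,x}$ with $s=(1-\tfrac q2)\tfrac{2\theta}{2-\theta}$. Lemma \ref{cfineq}, applied with exponent $q$ and using $\|\sigma\|_{L^\infty_tL^1_x}=2$, bounds $\sigma$ in $L^{q+2}_{t,x}$ by $\mathcal{K}_{0,q}(\|\partial_x\sigma^{q/2}\|^2_{L^2}+1)$, which is again uniform in $\eta$. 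We therefore need $s\leqslant q+2$, that is,
\[
\frac{(2-q)\theta}{2-\theta}\leqslant q+2 \iff \theta\leqslant \frac{q+2}{2}=1+\frac q2 .
\]
Choosing $\theta_\alpha := 1+\tfrac q2>1$ makes this hold, and $\theta_\alpha$ depends only on $\alpha$ through the choice of $q$.

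The remaining care is in the choice of $q$. Theorem \ref{sumestimates} with $\theta=1$ only covers $p\in(0,1)$, so $q=p+\alpha-1$ must be positive, which forces $p>1-\alpha$. Fixing for instance $p=1-\alpha/2$ gives $q=\alpha/2$ and hence $\theta_\alpha=1+\alpha/4$. This avoids any integrability assumption on the initial data beyond mass one.

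The main obstacle is bookkeeping rather than a new estimate: we must check that Lemma \ref{cfineq} and the gradient bound from Theorem \ref{sumestimates} are applied with the same exponent $q$. If $\theta=1$ in Theorem \ref{sumestimates} turned out not to give the gradient bound in this range, we would fall back on Inequality \eqref{eq:sumedi} directly with $p\in(0,1)$. There the left-hand side has a favourable sign, since $1/(p(p-1))<0$ multiplies a quantity bounded by Jensen's inequality. The right-hand side $\int\sigma^{p+1-\alpha}$ is bounded by mass conservation once $p\leqslant\alpha$, and in general it is handled by the bootstrap of Theorem \ref{sumestimates}.
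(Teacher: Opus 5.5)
Your proposal is correct and is essentially the paper's argument. The paper also takes the $L^2_{t,x}$ bound on $\partial_x\sigma^{\alpha/4}$ from Theorem \ref{sumestimates} with $\theta=1$; its ``$p=\frac{\alpha}{2}$'' is your $p=1-\frac{\alpha}{2}$, as its displayed norms of $\sigma^{1-\alpha/2}$ and $\partial_x\sigma^{\alpha/4}$ show. It then upgrades this via Lemma \ref{cfineq} to $\sigma\in L^{2+\alpha/2}_{t,x}$ and concludes by H\"older on $\partial_x\sigma=\frac{4}{\alpha}\,\sigma^{1-\alpha/4}\,\partial_x\sigma^{\alpha/4}$, which yields exactly your explicit exponent $\theta_\alpha=1+\frac{\alpha}{4}$.
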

    \begin{proof}
    Recall the gradient estimates established in Theorem \ref{sumestimates}.
    By choosing $\theta = 1$ and $p = \f{\alpha}{2}$ (or $p = \frac{1}{2}$ for the logarithmic pressure) in Theorem \ref{sumestimates}, it follows that there exists a constant $\mathcal{K}_{1,1}$, whose size depends only on $\kappa,\alpha, |\Omega|,T$ and the Lipschitz constants of the potentials, such that 
     \begin{equation}\label{eq:alphaineq}
     \|\sigma^{1-\frac{\alpha}{2}}\|_{L^\infty([0,T];L^1(\Omega))} + \|\partial_x\sigma^\f{\alpha}{4}\|_{L^2([0,T]\times \Omega)} < \mathcal{K}_{1,1}.
     \end{equation}
     Then, further recalling 
    Lemma \ref{cfineq}, there exists $\caK_{0,\frac{\alpha}{2}} > 0$, depending only $\alpha, |\Omega|$ and $T$, such that
    \begin{equation}\label{eq:nearlythereagain}
    \begin{split}
        \|\sigma\|_{L^{2+\frac{\alpha}{2}}([0,T]\times\Omega)}^{2+\frac{\alpha}{2}} \leqslant \caK_{0,\frac{\alpha}{2}} \left(\left\|\partial_x \sigma^\frac{\alpha}{4} \right\|_{L^2([0,T];L^2(\Omega))}^2+1\right) \leqslant \caK_{0,\frac{\alpha}{2}} (\mathcal{K}_{1,1}^2+1).
    \end{split}
    \end{equation}
    Since
    \[
    \frac{2+ \f{\alpha}{2}}{1-\f{\alpha}{4}} > 2 \text{ for all } \alpha  \in (0,1),
    \]
    it follows from Inequality \eqref{eq:nearlythereagain} that there exists $\zeta_\alpha  > 2$ such that 
    \[
    \|\sigma^{1-\f{\alpha}{4}}\|_{L^{\zeta_\alpha}([0,T]\times\Omega)} < c_1
    \] 
    where $c_1$ is a positive constant which depends only on $\alpha,\kappa,|\Omega|,T$ and the Lipschitz constants of the potentials.
    Moreover, $\partial_x\sigma$ may be expressed as
    \[
    \partial_x \sigma ^\frac{\alpha}{4}\cdot \sigma^{1-\f{\alpha}{4}} = \frac{\alpha}{4} \partial_x\sigma,
    \]
    that is, the product of a function bounded in $L^2_{t,x}$ with a function bounded in $L^{\zeta_\alpha}_{t,x}$.
    In particular, since $\zeta_\alpha >2$, it follows from applying H\"older's inequality that there similarly exists 
    $\theta_\alpha  > 1$ and a further constant $\mathcal{K}_5$, whose value depends only on $c_1,\alpha$ and $\mathcal{K}_{1,1}$ such that 
    \[
    \|\partial_x\sigma\|_{L^{\theta_\alpha}([0,T]\times\Omega)} < \mathcal{K}_5.
    \] 
    \end{proof}
\begin{proposition}\label{ftdbound}
        Let $\eta \in I $ and let $(\rho_1,\rho_2)$ denote a classical solution of System \eqref{eq:vcdid}.
        Then, there exists $\mathcal{K}_6 > 0$, whose size depends only on $\Omega, T$, the constants $\alpha,\kappa,\mathcal{K}_{\inf}$ introduced in Hypothesis \ref{hypothesis2}, the Lipschitz constants of the potentials, and 
        \[
        \mathscr{F}_\eta[\rho_{1,0},\rho_{2,0}]
        \]
        such that 
        \begin{equation}\label{eq:dtrhoibound2}
        \|\partial_t \rho_i \|_{L^2([0,T];H^{-3}(\Omega))} \leqslant \mathcal{K}_6, \text{ for } i \in \{1,2\}.
        \end{equation}
\end{proposition}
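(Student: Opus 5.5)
The plan is to test the weak form of System \eqref{eq:vcdid} against $\varphi \in L^2([0,T];H^3_0(\Omega))$. In one dimension $H^3_0(\Omega)\hookrightarrow W^{2,\infty}(\Omega)$, so the test functions have bounded first and second derivatives. Integrating by parts once, the no-flux boundary condition removes the boundary term, and we obtain
\[
\int_0^T\langle \partial_t\rho_i,\varphi\rangle \diff t = -\int_0^T\int_\Omega \eta\,\partial_x\rho_i\,\partial_x\varphi + \rho_i\partial_x f'(\sigma)\,\partial_x\varphi + \rho_i\partial_xV_i\,\partial_x\varphi \diff x\diff t .
\]
For the viscous term we integrate by parts once more; this is legitimate since $\partial_x\varphi=0$ on $\partial\Omega$ for $\varphi\in H^3_0$. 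It becomes $\eta\int\rho_i\partial_{xx}\varphi$, which is bounded by $\eta\|\rho_i\|_{L^\infty_tL^1_x}\|\partial_{xx}\varphi\|_{L^1_tL^\infty_x}\leqslant 2\sqrt{T}\,c\,\|\varphi\|_{L^2H^3}$, using mass conservation. The drift term is bounded in the same way by $\|\partial_xV_i\|_{L^\infty}\cdot 1\cdot\|\partial_x\varphi\|_{L^1_tL^\infty_x}$.

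The main term is the pressure term. Since $0\leqslant\rho_i\leqslant\sigma$, we write $\rho_i|\partial_xf'(\sigma)| \leqslant \sqrt{\rho_i}\,\sqrt{\sigma}\,|\partial_x f'(\sigma)|$. Cauchy--Schwarz in space and time then gives
\[
\Big|\int_0^T\int_\Omega \rho_i\partial_xf'(\sigma)\partial_x\varphi\Big| \leqslant \|\sqrt{\sigma}\partial_xf'(\sigma)\|_{L^2_{t,x}}\,\|\rho_i\|^{1/2}_{L^\infty_tL^1_x}\,\|\partial_x\varphi\|_{L^2_tL^\infty_x}\leqslant \sqrt{2\mathcal{K}_3}\,c\,\|\varphi\|_{L^2H^3},
\]
where $\mathcal{K}_3$ is supplied by the Fisher information bound \eqref{eq:fisher} of Proposition \ref{gfedi}. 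This gives the dependence on $\mathcal{K}_{\inf}$, $\mathscr{F}_\eta[\rho_{1,0},\rho_{2,0}]$, $T$ and the Lipschitz constants claimed in the statement. In the logarithmic case Proposition \ref{gfedi} gives the same bound via the relative entropy lower bound instead of \ref{I4}.

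Taking the supremum over $\|\varphi\|_{L^2([0,T];H^3_0)}\leqslant1$ and using duality yields \eqref{eq:dtrhoibound2}, with $\mathcal{K}_6$ independent of $\eta\in I$ since $\eta\leqslant1$.

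The only delicate point is the pressure term, because only the weighted quantity $\sqrt{\sigma}\,\partial_xf'(\sigma)$ is controlled. I expect the splitting $\rho_i\leqslant\sqrt{\rho_i\sigma}$, combined with the $L^\infty$ embedding for $\partial_x\varphi$, to resolve it. Using $H^{-3}$ rather than $H^{-1}$ is exactly what provides enough derivatives on $\varphi$ to absorb the viscous term via the second integration by parts.
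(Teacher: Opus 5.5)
Your proposal is correct and follows essentially the same route as the paper. Both arguments use duality against $H^3_0(\Omega)\hookrightarrow W^{2,\infty}(\Omega)$ (the paper phrases this via $(C^2_0(\Omega))^{*}\hookrightarrow H^{-3}(\Omega)$), mass conservation for the viscous and drift terms, and the splitting $\rho_i\,\partial_x f'(\sigma)=\sqrt{\rho_i}\cdot\sqrt{\rho_i}\,\partial_x f'(\sigma)$ with $\rho_i\leqslant\sigma$ to invoke the Fisher information bound \eqref{eq:fisher} of Proposition \ref{gfedi}. Your explicit treatment of the boundary terms through $\varphi=\partial_x\varphi=0$ on $\partial\Omega$ is, if anything, slightly more careful than the paper's write-up.
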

\begin{proof}
Recall from System \eqref{eq:vcdid} that $\rho_i$ satisfies the equality 
\[
\partial_t\rho_i = \eta \partial_{xx}\rho_i + \partial_x(\rho_i \partial_x(f'(\sigma) + V_i))
\]
on $(0,T)\times \Omega$ for $i \in \{1,2\}$. 
To establish a time derivative bound, the viscosity term is addressed first.

\medskip 

Since $\rho_{i,0} \in \sP(\Omega)$, it follows that 
\[
\|\rho_i\|_{L^2([0,T];L^1(\Omega))} = T^\frac{1}{2}.
\]
Further, since $H^3_0(\Omega) \hookrightarrow C^{2,\frac{1}{2}}_{0}(\Omega) \hookrightarrow C^2_0(\Omega)$, it follows that 
$ (C^2_0(\Omega))^{*}\hookrightarrow H^{-3}(\Omega)$.
As such, we now work to establish a bound for the time derivative in 
\[
L^2([0,T];(H_0^3(\Omega))^{\ast})
\]
uniformly in $\eta$.
In particular, the application of the Aubin--Lions Lemma does not require a sharp control on the quantity $\partial_t\rho_i$ and a uniform bound in any negative Sobolev space is sufficient.

\medskip 

There exists 
and $c_1 \coloneqq c_1(\Omega,T)> 0$, depending only on $|\Omega|$ and $T$, for which the viscosity term satisfies 
\begin{equation}\label{eq:dtviscosity}
\begin{split}
\eta\| & \partial_{xx}\rho_i\|_{L^2([0,T];H^{-3}(\Omega))} \leqslant c_1\eta \|\partial_{xx}\rho_i\|_{L^2([0,T];(C^2_0(\Omega))^{*})}\\
& \leqslant c_1\eta \|\rho_i\|_{L^2([0,T];L^1(\Omega))}\leqslant \eta c_1 T^\frac{1}{2}.
\end{split}
\end{equation}
To address the momentum term, first recognise that the drift part satisfies 
\begin{equation}\label{eq:dtdrift}
\|\rho_i \partial_xV_i \|_{L^2([0,T];L^1(\Omega))} \leqslant \|\partial_xV_i\|_{L^\infty(\Omega)} T^\frac{1}{2}
\end{equation}
In addition, thanks to the estimate of Proposition \ref{gfedi}, the diffusive part satisfies
\begin{equation}\label{eq:dtdiff}
\begin{split}
& \|\rho_i\partial_xf'(\sigma)\|_{L^2([0,T];L^1(\Omega))} \\
& \leqslant 
\|\sqrt{\rho_i}\|_{L^\infty([0,T];L^2(\Omega))}\|\sqrt{\rho_i}\partial_xf'(\sigma)\|_{L^2([0,T];L^2(\Omega))}
\leqslant \mathcal{K}_3.
\end{split}
\end{equation}
Together, Estimates \eqref{eq:dtdrift} and \eqref{eq:dtdiff} show that 
\begin{equation}\label{eq:dtmomentum}
\begin{split}
\|\partial_x&(\rho_i \partial_x(f'(\sigma) + V_i))\|_{L^2([0,T];H^{-3}(\Omega))} \\
& \leqslant c_1\|\partial_x(\rho_i \partial_x(f'(\sigma) + V_i))\|_{L^2([0,T];(C_0^2(\Omega))^{*})} \\
& \leqslant c_1\|\rho_i\partial_xf'(\sigma)\|_{L^2([0,T];L^1(\Omega))} + c_1\|\rho_i \partial_xV_i \|_{L^2([0,T];L^1(\Omega))} \\
& \leqslant c_1\|\partial_xV_i\|_{L^\infty(\Omega)} T^\frac{1}{2} + c_1\mathcal{K}_3.
\end{split}
\end{equation}
The conclusive bound for $\partial_t \rho_i$ is subsequently deduced via the triangle inequality.
In particular, the aggregation of the Inequalities \eqref{eq:dtviscosity} and \eqref{eq:dtmomentum} yields the desired estimate for 
\[
 \|\partial_t \rho_i \|_{L^2([0,T];H^{-3}(\Omega))} \leqslant c_1\left(\|\partial_xV_i\|_{L^\infty(\Omega)} T^\frac{1}{2} + \eta T^\frac{1}{2}+ \mathcal{K}_3\right).
\]
\end{proof}
  \subsection{Compactness}
   Having established all of the necessary bounds, it is left to apply the Aubin--Lions Lemma, establishing strong compactness for each approximate family of densities and to pass to the limiting weak formulation of System \eqref{eq:cdid}.
    As was discussed towards the end of the introductory discussion, the Assumptions \ref{K1}-\ref{K3}, imposed of the potentials and initial data to access the classical regularity theory of the viscous system, are much stronger than Assumptions \ref{J1}-\ref{J4}, which are sufficient for the weak existence theory.

    \medskip 

    Consequently, before establishing the sufficient compactness and subsequent convergence to the limiting system, it is first proven that any initial data and potentials satisfying Assumptions \ref{J1}-\ref{J5} may be approximated by families of smooth data and potentials which, for $\eta \in I$, satisfy Assumptions \ref{K1}-\ref{K3} and \ref{J5}.
    Moreover, the approximation may be constructed in such a way that all quantities listed in the statement of Proposition \ref{sumrequirements} are bounded uniformly in $\eta$.

    \medskip 

    The approximation procedure is performed in two steps. 
    Firstly, an approximating family of initial data and potentials is constructed such that the the quantities 
     \[
    \|\partial_x r_0^\eta\|_{L^1(\Omega)}, \ \|V_1\|_{W^{2,1}(\Omega)}, \  \|V_2\|_{W^{2,1}(\Omega)}, \ \|V_2-V_1\|_{W^{3,1}(\Omega)}.
     \]
    are bounded uniformly in $\eta$.
    
    \medskip 
    
    Then, to control the quantity 
    \[
    \|\mathcal{B}(t)\|_{L^1([0,T])}
    \]
    in the context of the fast-diffusive pressure law, it is necessary to ensure the boundedness of several zero- and first order norms of non-linear functions of $\sigma$ (cf. Proposition \ref{sumrequirements}).
    Several of these norms are a priori bounded in the limit as $\eta \to 0$.
    However, the norms lacking an a priori estimate are each multiplied by a pre-factor that vanishes as $\eta \to 0$.

    \medskip 

    Consequently, the second approximation procedure involves ensuring that the blow-up of each norm which is multiplied by a power of $\eta$ is beaten by the rate at which the pre-factor vanishes as $\eta \to 0$.

        \begin{proposition}\label{logapproximation}
        Let $\rho_{1,0},\rho_{2,0} \in \sP^{ac}(\Omega)$ and let $V_1,V_2 \in W^{1,\infty}(\Omega)$.
        If Hypothesis \ref{hypothesis3} is satisfied then, for each $\eta \in I$, there exist $\rho_{1,0}^\eta,\rho_{2,0}^\eta \in \sP^{ac}(\Omega)$ and $V_1^\eta,V_2^\eta \in W^{1,\infty}(\Omega)$ such that:
       \begin{itemize}
            \item $(\rho_{1,0}^\eta,\rho_{2,0}^\eta )$ and $(V_1^\eta,V_2^\eta)$ satisfy Assumptions \ref{K1}-\ref{K3} and \ref{J5};
           \item $\displaystyle\rho_{i,0}^\eta \rightharpoonup \rho_{i,0}$ and $V_i^\eta \rightharpoonup V_i$ for $i \in \{1,2\}$ as $\eta \to 0$;
           \item the following quantities are bounded uniformly in $\eta$:
       \end{itemize}
       \[
       \|\partial_x r_0^\eta\|_{L^1(\Omega)}, \ \|V_1\|_{W^{2,1}(\Omega)}, \  \|V_2\|_{W^{2,1}(\Omega)}, \ \|V_2-V_1\|_{W^{3,1}(\Omega)}.
       \]
    \end{proposition}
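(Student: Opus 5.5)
The approach is to build the approximation in three layers: first the potentials, then the ratio and the aggregate separately, and finally a correction in a thin boundary layer to enforce the compatibility conditions \ref{K3}. The key observation is that the initial datum should be parametrised by the pair $(\sigma_0,r_0)$, with $\sigma_0 = \rho_{1,0}+\rho_{2,0}$ and $r_0$ the representative with $\mathrm{TV}(r_0)<+\infty$ from Remark \ref{ratioremark}. Then
\[
\rho_{1,0}^\eta = r_0^\eta\sigma_0^\eta, \qquad \rho_{2,0}^\eta = (1-r_0^\eta)\sigma_0^\eta .
\]
With this choice the bound on $\|\partial_x r_0^\eta\|_{L^1(\Omega)}$ is decoupled from the regularisation of $\sigma_0$.

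\textbf{Potentials.} Extend $V_1,V_2$ evenly across $\partial\Omega$; this is compatible with \ref{J5}. Mollify at scale $\delta(\eta)$ and add a small correction supported near $\partial\Omega$ so that $\partial_xV_i^\eta=0$ on $\partial\Omega$. Alternatively, take $V_i^\eta = V_i + \int_0^x (\text{mollified } \partial_s V_i - \text{boundary corrector})$. Standard mollifier estimates then give
\[
V_i^\eta\in C^{3,1}(\overline\Omega),\qquad \|V_i^\eta\|_{W^{2,1}}\lesssim\|V_i\|_{W^{2,1}},\qquad \|V_1^\eta-V_2^\eta\|_{W^{3,1}}\lesssim \|V_1-V_2\|_{W^{3,1}},
\]
together with $V_i^\eta\to V_i$ uniformly. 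Here the corrector is, for instance, $\partial_xV_i(\text{bdry})\,\chi(x/\delta)$, whose $W^{1,1}$ and $W^{2,1}$ contributions must be checked to stay bounded. Because $\partial_xV_i$ already vanishes at the boundary, these corrections are small.

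\textbf{Ratio and aggregate.} For the ratio, mollify $r_0$ (after even reflection) to obtain $r_0^\eta\in C^\infty$ with values in $[0,1]$, $\mathrm{TV}(r_0^\eta)\leqslant \mathrm{TV}(r_0)$, and $\partial_x r_0^\eta=0$ near $\partial\Omega$. For the aggregate, set
\[
\sigma_0^\eta = \frac{(\sigma_0\ast\theta_\delta)+\delta}{1+\delta|\Omega|},
\]
which is positive, has unit mass per species after normalisation, and converges in $L^1$ to $\sigma_0$. Since $\rho_{i,0}^\eta$ need not have mass exactly $1$, I would renormalise each $\rho_{i,0}^\eta$ by its mass $m_i^\eta\to1$. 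This does not change the ratio $r_0^\eta$ up to a bounded multiplicative distortion whose total-variation effect is controlled. If that proves awkward, I would instead define $r_0^\eta$ through a slight rescaling.

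\textbf{Compatibility conditions.} This is the main obstacle: \ref{K3} demands the nonlinear first- and second-order compatibility conditions \eqref{eq:zeroorder} and \eqref{eq:firstorder} at $\partial\Omega$. My plan is to make all relevant quantities locally constant near the boundary. Choose the corrections so that $\sigma_0^\eta$ and $r_0^\eta$ are constant in a neighbourhood of $\partial\Omega$ and $\partial_xV_i^\eta\equiv0$ there; for $V_i^\eta$, make $V_i^\eta$ itself constant near $\partial\Omega$.

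With this choice every term in \eqref{eq:zeroorder} vanishes identically near the boundary. The operators $\mathfrak D_i(\rho_{1,0}^\eta,\rho_{2,0}^\eta)$ of \eqref{eq:boundaryop} vanish on that neighbourhood, so \eqref{eq:firstorder} also holds trivially.

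Flattening near the boundary can be done by composing with a map that collapses $[0,\delta]$ and $[L-\delta,L]$ to the endpoints before mollifying. This keeps total variations bounded and perturbs the $W^{2,1}$ and $W^{3,1}$ norms of the potentials only by quantities controlled via \ref{J3}–\ref{J5}; this perturbation of higher derivatives is the step I would verify most carefully. Weak convergence of the data and potentials then follows from $L^1$ convergence of the mollified, reparametrised functions as $\delta(\eta)\to0$.
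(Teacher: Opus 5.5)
Your proposal is correct and is essentially the paper's construction: parametrise the data by $(\sigma_0,r_0)$ with $r_0\in BV(\Omega)$, regularise $\sigma_0$ with a positive floor and $r_0$ by mollification, make both constant in a boundary layer, normalise the masses, and approximate the potentials smoothly with vanishing derivative on $\partial\Omega$, so that the compatibility conditions hold trivially. Two of your choices are more careful than the paper's. First, making $V_i^\eta$ constant near $\partial\Omega$, rather than only imposing $\partial_xV_i^\eta=0$ there, is what removes the $\partial_{xxx}V_i^\eta$ terms that otherwise survive in \eqref{eq:firstorder} when the data are locally constant (there $\mathfrak{D}_i=\rho_{i,0}^\eta\partial_{xx}V_i^\eta$); its $W^{3,1}$ cost stays bounded because \ref{J5} gives $\partial_x(V_1-V_2)=O(\mathrm{dist}(x,\partial\Omega))$. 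Second, renormalising each species separately keeps the ratio in $[0,1]$, whereas the paper divides the mollified ratio $\tilde r_0^\eta$ by a constant $c_1^\eta$, which could push it above $1$ and make $\rho_{2,0}^\eta$ negative.
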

    \begin{proof}
        Let 
        \[
        \sigma_0 \coloneqq \rho_{1,0} + \rho_{2,0}.
        \]
        Since $\rho_{1,0},\rho_{2,0}\in \sP^{ac}(\Omega)$, there exists $r_0 \in L^\infty(\Omega)$ satisfying 
        \[
        \rho_{1,0} = r_0\sigma_0
        \]
        almost everywhere in $\Omega$.
        As facilitated by Assumption \ref{J2}, a representative $r_0$ is fixed such that $r_0 \in BV(\Omega)$.
        Then, for each $\eta \in I$, let $\hat{\sigma}_0^\eta \co\Omega \to \R $ and $\hat{r}_0^\eta \co \Omega \to \R$ be defined as follows
        \[
        \hat{\sigma}_0^\eta(x) \coloneqq 
        \begin{cases}
        1 &\text{ if } x \in [0,3\eta],\\
        \max\{\sigma_0(x),\eta\} &\text{ if } x \in (3\eta, L-3\eta), \\
        1 &\text{ if } x \in [L-3\eta, L],
        \end{cases}
        \]
        \[
        \hat{r}_0^\eta(x) \coloneqq 
        \begin{cases}
        0 &\text{ if } \in [0,3\eta],\\
        r_0(x) &\text{ if } x \in (3\eta, L-3\eta), \\
        0 &\text{ if } x \in [L-3\eta, L].
        \end{cases}
        \]        
        Now, let $\varphi\co \R \to \R$ denote a smooth symmetric and positive mollifier supported on $[-1,1]$ and, for $\eta \in I$, let
        $\varphi^\eta\co \R \to \R$ be given by 
        \[
        \varphi^\eta(x) \coloneqq \eta \varphi\left(\frac{x}{\eta}\right)
        \]
        such that $\varphi^\eta$ is smooth and compactly supported on $(-\eta,\eta)$.
        Then, let $\tilde{\sigma}_0^\eta$ and $\tilde{r}_0^\eta$ be defined in the following manner.
        \[
        \tilde{\sigma}_0^\eta(x) \coloneqq 
        \begin{cases}
        1 &\text{ if } x \in [0,\eta],\\
        \hat{\sigma}_0^\eta\ast \varphi^\eta(x) &\text{ if } x \in (\eta, L-\eta), \\
        1 &\text{ if } x \in [L-\eta, L],
        \end{cases}
        \]
        \[
        \tilde{r}_0^\eta(x) \coloneqq 
        \begin{cases}
        0&\text{ if } x \in [0,\eta],\\
        \hat{r}_0^\eta\ast \varphi^\eta(x) &\text{ if } x \in (\eta, L-\eta), \\
        0 &\text{ if } x \in [L-\eta, L].
        \end{cases}
        \]
        The operation $\ast$ denotes the standard convolution on $\R$, however, since $\varphi^\eta$ is compactly supported in $(-\eta,\eta)$ the above definition makes sense for $\tilde{\sigma}_0,\tilde{r}_0 \co \Omega \to \R$.
        Indeed, for $x \in (\eta,L-\eta)$,  $\tilde{\sigma}_0^\eta(x)$ satisfies the equality
        \[
          \tilde{\sigma}_0^\eta(x)  = \int_{\Omega} \hat{\sigma}_0(y)\varphi^\eta(x-y)\diff y
        \]
        and similarly for $\tilde{r}_0^\eta$.
        Moreover, for $x \in(\eta,2\eta)\cup (L-\eta,L-2\eta)$, 
        $ \sigma_0^\eta(x)$ satisfies the equality
        \[
          \sigma_0^\eta(x)  =\int_{\Omega} \varphi^\eta(x-y)\diff y = 1.
        \]
        Subsequently, $\sigma_0^\eta\co \Omega \to \R$ and $r_0^\eta \co \Omega \to \R$ are defined by 
        \[
         \sigma_0^\eta \coloneqq \frac{2\tilde{\sigma}_0^\eta}{\|\tilde{\sigma}_0^\eta\|_{L^1(\Omega)}} \quad \quad  r_0^\eta \coloneqq \frac{\tilde{r}_0^\eta}{c_1^\eta}.
        \]
        for a constant $c_1^\eta$ which will later be fixed for each $\eta \in I$.
        Further, let $\rho_{1,0}^\eta,\rho_{2,0}^\eta\co \Omega\to \R$ be defined by 
        \begin{align*}
        & \rho_{1,0}^\eta(x) \coloneqq  \sigma_0^\eta(x)r_0^\eta(x) \quad \text{ and } \\
        & \rho_{2,0}^\eta(x) \coloneqq\sigma_0^\eta(x)(1-r_0^\eta(x)) \quad  \text{ respectively.}
        \end{align*}
        The constant $c_1^\eta$ is thus fixed such that $\rho_{1,0}^\eta \in \sP(\Omega)$. 
        Moreover, since $\sigma_0^\eta$ was defined to satisfy $\|\sigma_0^\eta\|_{L^1(\Omega)} =2$, it further follows that $\rho_{2,0}^\eta \in \sP(\Omega)$ for such a choice of $c_1^\eta$.

        \medskip 
        
        Thanks to the construction of $ \tilde{\sigma}_0^\eta$ and $\tilde{r}_0^\eta$, observe that the sum $\sigma_0^\eta$ is bounded uniformly from below on $\overline{\Omega}$ by $\eta$ and that each $\rho_{i,0}^\eta$ is, for every $\eta \in I$: 
        \begin{enumerate}
            \item A probability measure,
            \item A function in $C^\infty(\overline{\Omega})$,
            \item Constant on the set $[0,2\eta]\cup[L-2\eta,L]$.
        \end{enumerate}
        Then, to approximate the potentials, recall that $C^\infty(\overline{\Omega})$ is dense in $W^{k,p}(\Omega)$ for $k \in \N$ and $p \in [1,+\infty)$ (see \cite[Theorem 9.2]{Brezisbook}) and, for each $\eta \in I$, there exist $V_1^\eta,V_2^\eta \in C^\infty(\overline{\Omega})$ such that 
        \[
        \lim_{\eta \to 0}\|V_i^\eta-V_i\|_{W^{2,1}(\Omega)} = 0, 
        \]
        and 
         \[
        \lim_{\eta \to 0}\|(V_1^\eta-V_2^\eta)-(V_1-V_2)\|_{W^{3,1}(\Omega)} = 0.
        \]
        Moreover, since 
        \[
        W^{2,1}(\Omega) \hookrightarrow W^{1,\infty}(\Omega),
        \]
        it follows that $\partial_xV_i^\eta$ converges uniformly to $\partial_x V_i$ on $\overline{\Omega}$. 
        Additionally, since $\partial_x V_1 = \partial_x V_2 = 0$ on $\partial\Omega$, it is further possible to assume that $\partial_xV_1^\eta = \partial_x V_2^\eta = 0 $ on $\partial\Omega$.

        \medskip 
        
        It has thus been shown that the approximating potentials and initial data satisfy Assumptions \ref{K1},\ref{K2} and \ref{J5}. 
        It is left to show  
        \begin{itemize}
            \item the approximating data satisfies \ref{K3};
            \item the appropriate convergence of the initial densities;
            \item that $\|\partial_x r_0^\eta\|_{L^1(\Omega)}$ is bounded uniformly in $\eta$.
        \end{itemize}
        For each $\eta \in I$, the approximate initial data has been constructed to be constant in a neighbourhood of $\partial\Omega$ and the approximating potentials have been constructed to satisfy $\partial_x V_i^\eta = 0$ on $\partial\Omega$.
        As such, all orders of derivatives of the terms $\rho_{i,0}^\eta$ featured in the compatibility conditions \eqref{eq:zeroorder} and \eqref{eq:firstorder} vanish.
        Thus, the second compatibility condition is satisfied and the only remaining non-zero term is the product 
        \[
        \rho_{i,0}^\eta\partial_x V_i^\eta = 0
        \]
        featured in Equation \eqref{eq:zeroorder}.
        However, since $\partial_x V_i^\eta = 0$ on $\partial\Omega$, this product must also vanish.

        \medskip
        
        Next, the convergence of the densities is asserted. 
        Since $\sigma_0,r_0 \in L^1(\Omega)$ it follows from the standard properties of mollifiers that $\sigma_0^\eta,r_0^\eta $ must converge strongly to the respective limits $\sigma_0$ and $r_0$ in $L^1(\Omega)$ as $\eta \to 0$.
        Moreover, since $r_0^\eta$ is uniformly bounded in $L^\infty(\Omega)$, $r_0^\eta$ must also converge to $r_0$ in the $L^\infty(\Omega)$ weak-$\ast$ sense as $\eta \to 0$.

        \medskip 

        The limiting objects $r_0,\sigma_0$ satisfy the relation 
        \[
        r_0\sigma_0 = \rho_{1,0}.
        \]
        Moreover, as the product of an $L^1(\Omega)$ strongly convergent sequence and an $L^\infty(\Omega)$ weak-$\ast$ convergent sequence the product $r_0^\eta\sigma_0^\eta$ must converge weakly in $L^1(\Omega)$ to the limiting density $\rho_{1,0}$ as $\eta \to 0$.
        A similar argument applies for the density $\rho_{2,0}$.
        
        \medskip 

        As a consequence of the convergence of initial densities, it follows that $c_1^\eta \to 1 $ as $\eta \to 0$.
        Then, to show the boundedness of the $BV(\Omega)$ norm, recognise that, since $r_0$ is valued in $[0,1]$, the function $\hat{r}_0$ satisfies the estimate
        \[
        TV\left(\hat{r}_0^\eta\right) \leqslant TV\left(r_0\right) +2.
        \]
        Moreover, since the action of mollification can not increase the total variation, also $r_0^\eta$ satisfies the following estimate.
        \[
        TV\left(r_0^\eta\right) \leqslant c_1^{\eta}\left(TV\left(r_0\right) +2\right).
        \]
        Moreover, since the constant $c_1^\eta$ must converge to $1$ as $\eta \to 0$, the family $(c_1^\eta)_{\eta \in I}$ is bounded uniformly in $\eta$ and hence,
        $ TV\left(r_0^\eta\right) $ is bounded uniformly in $\eta$.
    \end{proof}

  \begin{proposition}\label{approximation}
    Let $\rho_{1,0},\rho_{2,0} \in \sP^{ac}(\Omega)$, $V_1,V_2 \in W^{1,\infty}(\Omega)$ and assume Hypothesis \ref{hypothesis3}.
    Further, for $\eta \in I$ and  $i \in \{1,2\}$, let $\rho_{i,0}^\eta, V_i^\eta$ denote the approximating data constructed in Proposition \ref{logapproximation}.

    \medskip 
    
    Assuming Hypothesis \ref{hypothesis2}, let $(\rho_1^\eta,\rho_2^\eta)$ denote the unique classical solution of \eqref{eq:vcdid} with initial data $(\rho_{1,0}^\eta,\rho_{2,0}^\eta)$ and potentials $(V_1^\eta,V_2^\eta)$.
    The families $(\rho_{1,0}^\eta,\rho_{2,0}^\eta)_{\eta \in I}$ and $(V_1^\eta,V_2^\eta)_{\eta \in I}$ may be constructed such that the following norms are all bounded uniformly in $\eta$.

    \medskip 
    
     \begin{itemize}
     \begin{multicols}{2}  
        \item $\|\partial_x f'(\sigma^\eta) \sqrt{\sigma^\eta}\|_{L^2([0,T]\times\Omega)}$,
     \item $\|(\sigma^\eta)^{1-\alpha}\|_{L^1([0,T];W^{1,1}(\Omega))} $,
    \item $\|\partial_x \log(\sigma^\eta)\|_{L^2([0,T]\times\Omega)}$,
        \item $\|(\sigma^\eta)^{1-2\alpha}\|_{L^1([0,T]\times\Omega)}$,
    \end{multicols}
    \end{itemize}
    \begin{itemize}
    \begin{multicols}{2}
      \item $\eta\|(\sigma^\eta)^{2-2\alpha}\|_{L^1([0,T];W^{1,1}(\Omega))} $
    \item $\eta\|\partial_x (\sigma^\eta)^{\frac{1-\alpha}{2}}\|_{L^2([0,T]\times \Omega)} $,
    \newcolumn
     \item $\eta^2\|\partial_x(\sigma^\eta)^{3-3\alpha}\|_{L^1([0,T]\times \Omega)} $
     \item $\eta^2\|\partial_x(\sigma^\eta)^{1-\alpha}\|_{L^2([0,T]\times \Omega)} $,
     \item  $\eta^3\|\partial_x (\sigma^\eta)^{\frac{3-3\alpha}{2}}\|_{L^2([0,T]\times \Omega)} $.
     \end{multicols}
     \end{itemize}
  \end{proposition}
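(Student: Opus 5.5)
The plan is to divide the listed norms into two groups. The first group consists of the $\eta$-independent ones, namely the first four. These should follow from Theorems \ref{sumestimates}, \ref{reciprocalestimates} and Proposition \ref{gfedi}, provided the approximating data of Proposition \ref{logapproximation} inherit the relevant integrability of the limit data uniformly in $\eta$. The second group consists of the norms carrying a prefactor $\eta^k$. For these I would only use bounds with a controlled blow-up rate in $\eta$, coming from the lower bound $\sigma_0^\eta \geqslant \eta$ built into the construction. If necessary, I would modify the construction, for instance replacing the truncation level $\eta$ by $\eta^\gamma$ or mollifying at a slower scale, so that these blow-ups are beaten by the prefactors.

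\textbf{Uniform group.} For $\|\partial_x f'(\sigma^\eta)\sqrt{\sigma^\eta}\|_{L^2}$ I would invoke Proposition \ref{gfedi}. This requires $\mathscr{F}_\eta[\rho^\eta_{1,0},\rho^\eta_{2,0}]$ to be bounded uniformly.
\begin{itemize}
\item $\int f(\sigma_0^\eta)$ is controlled by convexity of $f$ and Jensen under mollification, using \ref{J6}. The truncation at $\max\{\sigma_0,\eta\}$ and the boundary layers change $\mathscr{H}$ by $o(1)$, since $f$ is continuous at $0^+$ in the fast diffusion range, or $s\log s$ is bounded near $0$.
\item $\eta\int \rho_i\log\rho_i$ is bounded by $\eta\log\|\rho^\eta_{i,0}\|_{L^\infty}\lesssim \eta\log(1/\eta)\to 0$, because mollification at scale $\eta$ gives $\|\sigma_0^\eta\|_\infty\lesssim \eta^{-1}$.
\end{itemize}
For $\|(\sigma^\eta)^{1-\alpha}\|_{L^1W^{1,1}}$ and $\|(\sigma^\eta)^{1-2\alpha}\|_{L^1}$ I would apply Theorem \ref{sumestimates} with $\theta=1$ and $p$ suitably small. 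This bounds $\partial_x\sigma^{(p+\alpha-1)/2}$ in $L^2$. Writing $\partial_x \sigma^{1-\alpha}$ as a product of such a gradient with a power of $\sigma$ bounded via Lemma \ref{cfineq}, H\"older's inequality gives the $L^1$ bound. When $1-2\alpha<0$, the term $\sigma^{1-2\alpha}$ and $\partial_x\log\sigma$ (which is the case $p=1-\alpha$) require the reciprocal estimate of Theorem \ref{reciprocalestimates}. For $\alpha\geqslant 2/3$ this is exactly where \ref{J7} enters: $\sigma_0\geqslant c_{\inf}$ makes $\|(\sigma_0^\eta)^{-\theta}\|_{L^1}$ uniformly bounded, since truncation and mollification preserve the lower bound. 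For smaller $\alpha$, the needed exponent $\theta$ can be chosen so that $\int \sigma_0^{-\theta}$ is finite for general data; I would check that $\theta$ slightly above $1-\alpha$ suffices.

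\textbf{$\eta$-weighted group.} Here the approach is to run the dissipation inequality \eqref{eq:sumedi} with larger exponents $p$, now with initial norms $\|\sigma_0^\eta\|_{L^\theta}\lesssim \eta^{-(\theta-1)}$ from the $L^\infty$ bound. Tracking constants through the bootstrap of Theorem \ref{sumestimates}, which is polynomial in the initial norm, should give
\[
\|\partial_x(\sigma^\eta)^{q}\|\lesssim \eta^{-c(q)},
\]
with $c(q)$ small when the power $q\sim k(1-\alpha)$ is matched against $\eta^k$. If the exponents do not close directly, I would slow the mollification scale to $\eta^{\gamma}$ with $\gamma$ small, so that $\|\sigma_0^\eta\|_\infty\lesssim\eta^{-\gamma}$. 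All blow-up rates then become $\eta^{-C\gamma}$ and are dominated by $\eta^k$, $k\geqslant1$. The compatibility and boundary-constancy properties from Proposition \ref{logapproximation} are unaffected, since the boundary layers can stay of width $\eta$.

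\textbf{Main obstacle.} I expect the main obstacle to be the quantitative tracking in the bootstrap of Theorem \ref{sumestimates}, in particular verifying that the constant depends polynomially on $\|\sigma_0\|_{L^\theta}$. I would first check this through the Gr\"onwall-free linear structure of \eqref{eq:sumedi}, and then choose $\gamma$ accordingly.
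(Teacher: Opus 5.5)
Your treatment of the $\eta$-weighted norms is essentially the paper's: bounds that blow up at a controlled rate through the upper and lower bounds of $\sigma_0^\eta$, with the construction slowed if necessary. Your remark that $\eta\int_\Omega\rho^\eta_{i,0}\log\rho^\eta_{i,0}\diff x=O(\eta\log(1/\eta))$ already holds for the unmodified construction is a small improvement on the paper.

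The gap is in the uniform group for $\alpha\in(\frac12,\frac23)$. There you route $\|(\sigma^\eta)^{1-2\alpha}\|_{L^1([0,T]\times\Omega)}$, and also $\partial_x\log\sigma^\eta$, through Theorem \ref{reciprocalestimates}. That theorem needs $\|(\sigma_0^\eta)^{-\theta}\|_{L^1(\Omega)}$ bounded uniformly in $\eta$ for some $\theta>1-\alpha$, and you assert that $\int_\Omega\sigma_0^{-\theta}\diff x<+\infty$ for general data. This is false.
\begin{itemize}
\item For $\alpha<\frac23$, Hypothesis \ref{hypothesis3} imposes no lower bound on $\sigma_0$, so $\sigma_0$ may vanish on an interval $J$. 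This is compatible with \ref{J6} whenever $f$ is bounded near $0^+$, e.g. for a power law.
\item Then $\int_\Omega\sigma_0^{-\theta}\diff x=+\infty$ for every $\theta>0$.
\item For the approximations, $\sigma_0^\eta\approx\eta$ on most of $J$, so $\int_\Omega(\sigma_0^\eta)^{-\theta}\diff x\gtrsim|J|\eta^{-\theta}\to+\infty$. The same happens if you truncate at $\eta^\gamma$ instead.
\end{itemize}
So Theorem \ref{reciprocalestimates} gives nothing uniform in $\eta$ in this range.

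The missing observation is that \eqref{eq:sumedi} with $p\in(0,1)$ already controls negative powers with no lower bound on the data. Since $\frac{1}{p(p-1)}<0$, the term $\int_\Omega\sigma_0^p\diff x$ enters with a favourable sign and only the mass matters. Writing $\gamma\coloneqq p+\alpha-1\in(\alpha-1,\alpha)$, one gets
\[
\frac{2}{\alpha\kappa\gamma^2}\|\partial_x\sigma^{\gamma/2}\|^2_{L^2([0,T]\times\Omega)}\leqslant\frac{1}{p(1-p)}\int_\Omega\sigma^p(T)\diff x+C\int_0^T\int_\Omega\sigma^{p+1-\alpha}\diff x\diff t ,
\]
with $C$ depending only on $\alpha,\kappa$ and the Lipschitz constants of the potentials.

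Take $\gamma=1-2\alpha$, i.e. $p=2-3\alpha\in(0,\frac12)$. Then $p+1-\alpha=3-4\alpha<1$, so the right-hand side is bounded using only Jensen's inequality and $\|\sigma(t)\|_{L^1(\Omega)}=2$. Next, $\sigma(t)$ attains a value $\geqslant 2/|\Omega|$ and $\gamma<0$. Integrating $\partial_x\sigma^{\gamma/2}$ from that point gives
\[
\|\sigma^{\gamma}(t)\|_{L^\infty(\Omega)}\leqslant 2(2/|\Omega|)^{\gamma}+2|\Omega|\|\partial_x\sigma^{\gamma/2}(t)\|_{L^2(\Omega)}^2 ,
\]
hence the uniform $L^1([0,T]\times\Omega)$ bound on $(\sigma^\eta)^{1-2\alpha}$.

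The admissibility condition $1-2\alpha>\alpha-1$ is exactly $\alpha<\frac23$, which is why \ref{J7} is only needed beyond that threshold. Similarly, $\partial_x\log\sigma^\eta$ is just the case $p=1-\alpha\in(0,1)$ of Theorem \ref{sumestimates} with $\theta=1$. That case depends on the data only through the mass, so no reciprocal estimate is involved there at all.

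A minor point: under Hypothesis \ref{hypothesis2}, $f$ need not be continuous at $0^+$. However, \ref{I2} forces $f'(0^+)=-\infty$, so $f$ is decreasing near $0$ and the truncation $\max\{\sigma_0,\eta\}$ can only lower $\mathscr{H}$. Your bound on $\mathscr{F}_\eta$ therefore survives.
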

  \begin{proof}
      First, the boundedness of the norm 
      \[
      \|\partial_x f'(\sigma^\eta) \sqrt{\sigma^\eta}\|_{L^2([0,T]\times\Omega)}.
      \]
      is addressed.
      From Proposition \ref{gfedi}, it follows that there exists a constant $\mathcal{K}_3$ depending only on
      $\mathcal{K}_{\inf},T$, the Lipschitz constants of the potentials, and 
     \[
    \mathscr{F}_\eta[\rho_{1,0}^\eta,\rho_{2,0}^\eta],
    \] 
    such that 
    \[
    \|\partial_xf(\sigma^\eta) \sqrt{\sigma^\eta}\|_{L^2([0,T]\times\Omega)}^2 < \mathcal{K}_3.  
    \]
    The potentials $V_i^\eta$ are constructed so that their Lipschitz constant is bounded uniformly in $\eta$.
    Consequently, it is left to ensure that the quantity
    \[
    \eta \sum_{i=1}^2\int_{\Omega} \rho_{i,0}^\eta\log(\rho_{i,0}^\eta)
    \]
    is bounded uniformly in $\eta$.

    \medskip 
    
    Such a bound may be achieved by re-parameterising the mollifier used to construct the family $\rho_{i,0}^\eta$ in 
    Proposition \ref{logapproximation} such that smoothness of $\rho_{i,0}^\eta$ diverges more slowly as $\eta \to 0$.
    Indeed, such a re-parameterisation, will mean that all the norms
    \[
    \|\rho_{i,0}^\eta\|_{W^{k,p}(\Omega)} , \quad k \in \N, \ p \in [1,+\infty]
    \]
    will also blow up more slowly as $\eta \to 0$.

    \medskip
    
    Indeed, the rate at which the mollifier, introduced in \ref{logapproximation}, converges to the singular kernel may be re-parameterised such that the quantity
    \[
    \eta \sum_{i=1}^2\int_{\Omega} \rho_{i,0}^{\eta}\log(\rho_{i,0}^{\eta})
    \]
    is bounded uniformly in $\eta$.

    \medskip 

    The same parameterisation strategy may be applied to establish boundedness for each of the norms listed in the statement of the proposition, which are multiplied by a power of $\eta$.
    Indeed, recalling Theorem \ref{sumestimates}, it has been shown that for each $\theta \in [1,+\infty)$, there exists a constant $\mathcal{K}_{1,\theta}$ whose value depends only on $\theta, \alpha,\kappa, |\Omega|,T$ the norm 
     \[
     \|\sigma_0^\eta\|_{L^\theta(\Omega)},
     \]
     and the Lipschitz constants of the potentials, such that 
     \begin{equation}\label{eq:recallthetamuineq}
     \begin{split}
     \left\|(\sigma^\eta)^{p}\right\|_{L^\infty([0,T];L^1(\Omega))} & + \left\|\partial_x(\sigma^\eta)^\f{p+\alpha-1}{2}\right\|_{L^2([0,T]\times \Omega)} \\
     & < \caK_{1,\theta} \text{ for every } p\in (0,1).
     \end{split}
     \end{equation}
     Further, recalling Theorem \ref{reciprocalestimates}, there exists, for each $\theta \in (1-\alpha, +\infty)$, a constant $\caK_{2,\theta} > 0$ whose size depends only on $\alpha,\kappa, \theta, |\Omega|,T$, the norm 
     \[
     \|(\sigma_0^\eta)^{-\theta}\|_{L^1(\Omega)},
     \]
     and the Lipschitz constants of the potentials such that 
     \begin{equation}\label{eq:recallreciprocal}
     \left\|(\sigma^\eta)^{-\theta}\right\|_{L^\infty([0,T];L^1(\Omega))} + \left\|(\partial_x\sigma^\eta)^\f{\theta+\alpha -1}{2}\right\|_{L^2([0,T]\times \Omega)}  < \caK_{2,\theta}.
     \end{equation}

     \medskip 

     Since the parameters $\alpha,\kappa,|\Omega|,T$ are fixed and since the Lipschitz constant of the potentials are bounded uniformly in $\eta$, it thus follows from the aggregation of Theorems \ref{sumestimates} and \ref{reciprocalestimates} that, for any $\theta \in [1,+\infty)$
     the sum 
     \begin{equation}\label{eq:quantities}
           \|\partial_x (\sigma^\eta)^\theta\|_{L^2([0,T]\times\Omega)} + \|\partial_x (\sigma^\eta)^{-\theta}\|_{L^2([0,T]\times\Omega)}
     \end{equation}
    may be bounded by a constant which depends only $\theta$ being finite, along with a uniform upper bound and the strict positivity of $\sigma_0^\eta$.

    \medskip 
    
    By an application of Jensen's Inequality the quantities in \eqref{eq:quantities} may be used to bound any of the norms listed in the statement of the proposition which are multiplied by a power of $\eta$.
    Moreover, the dependence of the upper and lower bound on $\eta$ which was constructed in Proposition \ref{logapproximation} may be re-parametrised such that the quantities in \eqref{eq:quantities} blow up arbitrarily slowly in $\eta$ and hence, any such norms can be constructed to vanish as $\eta \to 0$ after multiplication by a pre-factor of $\eta$.

    \medskip 
    
    Attention is now paid to the three remaining norms, which are not multiplied by a factor of $\eta$.
    Firstly, recalling Theorem \ref{sumestimates} in the case $\theta=1$, there exists a constant $\mathcal{K}_{1,1}$ which depends only on $\alpha,\kappa, |\Omega|,T$
     and the Lipschitz constants of the potentials, such that 
     \begin{equation}\label{eq:recallthetamuineq2}
     \begin{split}
     \left\|(\sigma^\eta)^{p}\right\|_{L^\infty([0,T];L^1(\Omega))} & + \left\|\partial_x(\sigma^\eta)^\f{p+\alpha-1}{2}\right\|_{L^2([0,T]\times \Omega)} \\
     & < \caK_{1,1} \text{ for every } p\in (0,1)
     \end{split}
     \end{equation}
     with the choice $p = 1-\alpha $ replaced with $\partial_x \log(\sigma^\eta)$.
     In particular, the constant $\caK_{1,\theta}$ is independent of $\eta$.

     \medskip 

     By choosing $p = 1-\alpha$, it follows that $\partial_x \log(\sigma^\eta)$ is bounded in $L^2_{t,x}$ uniformly in $\eta$.

     \medskip 

     To establish the $\eta$-independent $W^{1,1}(\Omega)$ estimate for $(\sigma^\eta)^{1-\alpha}$ it is enough to estimate $\partial_x(\sigma^\eta)^{1-\alpha}$.
     Indeed, since 
     \[
     \|\sigma^\eta\|_{L^\infty([0,T];L^1(\Omega)} =2 
     \]
     and since $\alpha \in (0,1)$, it follows from an application of Jensen's inequality that
     $(\sigma^\eta)^{1-\alpha}$ is bounded in $L^1_{t,x}$ independently of $\eta$.
    
    \medskip
    
    Subsequently, choose $p = 1-\frac{\alpha}{2}$ in Inequality \eqref{eq:recallthetamuineq} and then recall from 
    Lemma \ref{cfineq}, there exists $\caK_{0,\frac{\alpha}{2}} > 0$, depending only $\alpha, |\Omega|$ and $T$, such that
    \begin{equation}\label{eq:recallnearlythereagain}
    \begin{split}
        \|\sigma\|_{L^{2+\frac{\alpha}{2}}([0,T]\times\Omega)}^{2+\frac{\alpha}{2}} \leqslant \caK_{0,\frac{\alpha}{2}} \left(\left\|\partial_x \sigma^\frac{\alpha}{4} \right\|_{L^2([0,T];L^2(\Omega))}^2+1\right) \leqslant \caK_{0,\frac{\alpha}{2}} (\mathcal{K}_{1,1}^2+1).
    \end{split}
    \end{equation}
    In particular, it follows from Inequality \eqref{eq:recallnearlythereagain} that $\sigma^\eta$ is bounded in $L^2_{t,x}$ independently of $\eta$.

    \medskip 
    
    Then, by taking $p= 1-3\alpha$, it follows from Inequality \eqref{eq:recallthetamuineq2} that the norm 
    \[
    \left\|\partial_x(\sigma^\eta)^{-\alpha}\right\|_{L^2([0,T]\times \Omega)} 
    \]
    is also bounded independently of $\eta$ and hence, so is the product 
    \begin{align*}
           & \|\partial_x (\sigma^\eta)^{1-\alpha}\|_{L^1([0,T]\times\Omega)} \\
           &\leqslant \frac{1-\alpha}{\alpha}\|\partial_x(\sigma^\eta)^{-\alpha}\|_{L^2([0,T]\times \Omega)} \|\sigma^\eta\|_{L^2([0,T]\times \Omega)}. 
    \end{align*}
    It is left to consider the last norms:
    \[
    \|(\sigma^\eta)^{1-2\alpha}\|_{L^1([0,T]\times\Omega)}.
    \]
    If $\alpha < \frac{1}{2}$, then such an estimate follows immediately from the equality 
     \[
     \|\sigma^\eta\|_{L^\infty([0,T];L^1(\Omega))} =2.
     \]
    As such, the setting $\frac{1}{2}\leqslant \alpha < \frac{2}{3}$ is first addressed.
    In this instance, it follows from Inequality \eqref{eq:recallthetamuineq2} that 
    \[
    \|\partial_x(\sigma^\eta)^\f{\gamma}{2}\|_{L^2([0,T]\times \Omega)}
    \]
    is bounded independently of $\eta$ for every $\gamma \in (\alpha -1, \alpha)$.
    In particular, this also means that $(\sigma^\eta)^{\gamma}$ is bounded in $L^1([0,T]\times\Omega)$, independently of $\eta$, for every $\gamma \in (\alpha -1,\alpha)$.
    In particular, for $\alpha \in [\frac{1}{2},\frac{2}{3})$, the inclusion 
    \[
    1-2\alpha \in (\alpha -1,\alpha) 
    \]
    is satisfied and hence the bound may be established by choosing 
    $\gamma = 1-2\alpha$.

    \medskip 
    
    It is then left to bound 
     \[
    \|(\sigma^\eta)^{1-2\alpha}\|_{L^1([0,T]\times\Omega)}
    \]
    in the setting $\alpha \in [\frac{2}{3},1)$. 
    Indeed, thanks to Assumption \ref{J7}, it is assumed that $\sigma_0$ is bounded uniformly from below.
    
    \medskip 
    
    Moreover, since $\sigma_0^\eta$ is constructed in Proposition \ref{logapproximation}, by a mollification of a uniformly strictly positive approximation of $\sigma_0$ and since mollification preserves lower bounds, it follows that 
    \[
    \|(\sigma^\eta_0)^{1-2\alpha}\|_{L^1([0,T]\times\Omega)}
    \]
    is bounded uniformly in $\eta$.
    Consequently, it follows from Inequality \eqref{eq:recallreciprocal} that 
    \[
    \|(\sigma^\eta)^{1-2\alpha}\|_{L^1([0,T]\times\Omega)}
    \]
    is bounded uniformly in $\eta$.
  \end{proof}

   \begin{remark}
       In the case that Hypothesis \ref{hypothesis1} holds for a power-law type pressure, Proposition \ref{approximation} may similarly be proven, however, thanks to Remark \ref{remark:bounds}, it is unnecessary to show the uniform boundedness of the form 
       \[
        \|(\sigma^\eta)^{1-2\alpha}\|_{L^1([0,T]\times\Omega)}.
        \]
        Similarly, for the choice of logarithmic pressure, it is not necessary to establish any uniform bound for the aggregate density estimates stated in Proposition \ref{sumrequirements} since the function $\xi$ is instead a constant.
   \end{remark}
    \begin{thm}\label{compactness}
        For each $\eta \in I$, let $(\rho_1^\eta,\rho_2^\eta)$ denote a classical solution of System \eqref{eq:vcdid} with associated potentials potentials $V_1^\eta,V_2^\eta$ and initial data $(\rho_{1,0}^\eta,\rho_{2,0}^\eta)$ satisfying Assumptions \ref{K1}-\ref{K4} and Hypothesis \ref{J5}.
       Further, suppose that 
       \[
       \|\partial_x r_0\|_{L^1(\Omega)}
       \]
       along with all the quantities listed in the statement of Proposition \ref{sumrequirements} are bounded uniformly in $\eta$.
        Then, there exists $\theta_\alpha >1$ such that, for each $i \in \{1,2\}$ and $p \in [1,+\infty)$, the family $(\rho_i^\eta)_{\eta \in I}$ is pre-compact for the strong topology in $L^{\theta_\alpha}([0,T];L^p(\Omega))$.
    \end{thm}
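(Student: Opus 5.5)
We will apply the Aubin--Lions--Simon lemma to each family $(\rho_i^\eta)_{\eta\in I}$, using the triple $W^{1,1}(\Omega)\Subset L^p(\Omega)\hookrightarrow H^{-3}(\Omega)$. In one dimension $W^{1,1}(\Omega)$ embeds in $BV(\Omega)\cap L^\infty(\Omega)$, and bounded sets of $BV\cap L^\infty$ are relatively compact in $L^p(\Omega)$ for every $p<\infty$, by Helly's theorem plus dominated convergence. Also $L^p(\Omega)\hookrightarrow L^1(\Omega)\hookrightarrow (C^2_0(\Omega))^*\hookrightarrow H^{-3}(\Omega)$. So it suffices to prove two bounds uniformly in $\eta$: first, $(\rho_i^\eta)$ is bounded in $L^{\theta_\alpha}([0,T];W^{1,1}(\Omega))$; second, $(\partial_t\rho_i^\eta)$ is bounded in $L^1([0,T];H^{-3}(\Omega))$. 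Simon's version of the lemma then gives precompactness in $L^{\theta_\alpha}([0,T];L^p(\Omega))$.

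The time-derivative bound is Proposition \ref{ftdbound}. Its constant $\mathcal{K}_6$ depends on $\mathscr{F}_\eta[\rho^\eta_{1,0},\rho^\eta_{2,0}]$ and on the Lipschitz constants of $V_i^\eta$. Both are uniform in $\eta$ by the construction in Propositions \ref{logapproximation} and \ref{approximation}, which include the reparametrisation making $\eta\int\rho^\eta_{i,0}\log\rho^\eta_{i,0}$ bounded. The term $\int f(\sigma_0^\eta)$ is controlled through \ref{J6} and the convexity of $f$ under mollification.

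For the spatial bound, write $\rho_1^\eta=r^\eta\sigma^\eta$ and $\rho_2^\eta=(1-r^\eta)\sigma^\eta$. Since $0\leqslant r^\eta\leqslant 1$,
\[
|\partial_x\rho_i^\eta|\leqslant |\partial_x\sigma^\eta| + \sigma^\eta|\partial_x r^\eta| \leqslant |\partial_x\sigma^\eta| + \|\sigma^\eta(t)\|_{L^\infty(\Omega)}|\partial_x r^\eta|.
\]
Integrating in space and using Theorem \ref{fConcludingGronwall}, we get
\[
\|\partial_x\rho_i^\eta(t)\|_{L^1}\leqslant \|\partial_x\sigma^\eta(t)\|_{L^1} + \mathcal{K}_4\|\sigma^\eta(t)\|_{L^\infty}.
\]
Here $\mathcal{K}_4$ is uniform because $\|\partial_x r_0^\eta\|_{L^1}$, $\|\mathcal{B}\|_{L^1([0,T])}$ (which Proposition \ref{sumrequirements} controls by the listed norms) and the Lipschitz constant of $V_1^\eta-V_2^\eta$ are all uniformly bounded. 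The embedding $W^{1,1}\hookrightarrow L^\infty$ gives $\|\sigma^\eta(t)\|_{L^\infty}\leqslant c(2+\|\partial_x\sigma^\eta(t)\|_{L^1})$. Proposition \ref{falpreparation} bounds $\partial_x\sigma^\eta$ in $L^{\theta_\alpha}_{t,x}$, hence in $L^{\theta_\alpha}([0,T];L^1(\Omega))$. Combined with $\|\rho_i^\eta(t)\|_{L^1}=1$, this yields the required $L^{\theta_\alpha}([0,T];W^{1,1}(\Omega))$ bound with $\theta_\alpha>1$ as in Proposition \ref{falpreparation}. For the logarithmic pressure we use the $p=\tfrac12$ variant noted there.

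The main obstacle is making sure every constant is genuinely $\eta$-independent under the approximation of data and potentials. In particular, $\mathcal{K}_4$ hides $\Lambda_1,\Lambda_2$, which depend on $\|\partial_x(V_1^\eta-V_2^\eta)\|_{L^\infty}$, and $\mathcal{B}$, which depends on the $\eta$-weighted norms. These are exactly the quantities assumed uniformly bounded in the hypothesis of the theorem, so the remaining work is bookkeeping. A secondary point is to state the version of Aubin--Lions used: Simon's lemma with $L^{\theta_\alpha}$ in time for the compact space and $L^1$ in time for the derivative, which is admissible since $\theta_\alpha<\infty$.
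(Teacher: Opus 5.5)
Your proposal is correct and follows essentially the same route as the paper. Theorem \ref{fConcludingGronwall}, Proposition \ref{falpreparation} and $W^{1,1}(\Omega)\hookrightarrow L^\infty(\Omega)$ give an $L^{\theta_\alpha}([0,T];W^{1,1}(\Omega))$ bound for $\rho_1^\eta=r^\eta\sigma^\eta$ and $\rho_2^\eta=(1-r^\eta)\sigma^\eta$, and Aubin--Lions--Simon with the $H^{-3}$ time-derivative bound of Proposition \ref{ftdbound} finishes (your use of $L^1$ in time for $\partial_t\rho_i^\eta$ is a harmless variant of the paper's $L^{\theta_\alpha}$). One simplification: uniformity of $\mathcal{K}_6$ follows directly from the assumed uniform bound on $\|\partial_x f'(\sigma^\eta)\sqrt{\sigma^\eta}\|_{L^2([0,T]\times\Omega)}$, since $0\leqslant\rho_i^\eta\leqslant\sigma^\eta$, so you need not invoke the specific construction of the approximating data or the convexity of $f$ under mollification.
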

    \begin{proof}
        Thanks to Theorem \ref{fConcludingGronwall}, there exists $\mathcal{K}_4 > 0$ for which the inequality
        \[
        \|\partial_x r ^\eta\|_{L^\infty([0,T];W^{1,1}(\Omega))} < \mathcal{K}_4 
        \]
        holds for each $\eta \in I$. 
 
        \medskip 
        
        Further, thanks to Corollary \ref{falpreparation}, there exist $\mathcal{K}_{5} > 0$ and $\theta_{\alpha} > 1$ for which the inequality 
        \[
        \|\partial_x \sigma ^\eta\|_{L^{\theta_\alpha}([0,T];W^{1,1}(\Omega))} < \mathcal{K}_{5}
        \]
        holds for each $\eta \in I$.

        \medskip 
        
        By using each of these estimates in conjunction with the embedding $W^{1,1}(\Omega) \hookrightarrow L^\infty(\Omega)$, it then follows that the products 
        \[
        r^\eta\sigma^\eta, (1-r^\eta)\sigma^\eta = \rho_1^\eta,\rho_2^\eta
        \]
        are bounded in $L^{\theta_\alpha}([0,T];W^{1,1}(\Omega))$ by a constant depending only on $\Omega, T, \mathcal{K}_4, \mathcal{K}_5$.

        \medskip 
        
        Further, recall from Proposition \ref{ftdbound} that there exists $\mathcal{K}_{6} > 0$ such that
        \[
        \|\partial_t\rho_i^\eta\|_{L^{\theta_\alpha}([0,T];H^{-3}(\Omega))} < \mathcal{K}_{6}
        \]
        for each $i \in \{1,2\}$ and $\eta \in I$.

        \medskip 
        
        Moreover, $|\Omega|,T,\kappa,\alpha$ are all fixed and, hence, if all the quantities listed in the statement of Proposition \ref{sumrequirements} along with the norm
        \[
        \|\partial_x r_0^\eta\|_{L^1(\Omega)}
        \]
        are bounded uniformly in $\eta$ then it may readily be verified from Theorems \ref{sumrequirements}, \ref{falpreparation} and \ref{ftdbound} that the constant $\caK_4,\caK_5$ and $\caK_6$ be chosen independently of $\eta$.

        \medskip 
        
        Now, let the set $X^{\alpha}$ be defined 
        \[
        X^{\alpha} \coloneqq \left\{f \in L^{\theta_\alpha}([0,T]:W^{1,1}(\Omega)) \ | \ \partial_tf \in L^{\theta_\alpha}([0,T];H^{-3}(\Omega))\right\}.
        \]
        For each $p \in [1,+\infty)$, the embedding $W^{1,1}(\Omega)\hookrightarrow L^p(\Omega)$ is compact and the embedding $L^p(\Omega) \hookrightarrow H^{-3}(\Omega)$ is continuous.
        Consequently, it follows from the Aubin--Lions Lemma (see \cite[Section 8; Page 85; Corollary 4]{AubinLionsSimon}) that the embedding 
        $X^{\alpha} \hookrightarrow L^{\theta_\alpha}([0,T];L^p(\Omega))$ is compact.

        \medskip 

        Thanks to the aforementioned uniform in $\eta$ estimates, the families $(\rho_1^\eta)_{\eta \in I}, (\rho_2^\eta)_{\eta \in I}$ inhabit a bounded subset of $X^{\alpha}$ and, hence, inhabit a strongly pre-compact subset of $L^{\theta_\alpha}([0,T];L^p(\Omega))$ for each $p \in [1,+\infty)$.
        \end{proof}
\subsection{Convergence}
Equipped with the requisite compactness for each individual density, it is then left to establish the convergence to System \eqref{eq:cdid}

\begin{proof}[Proof of Theorem \ref{maintheorem}]
   Consider initial data and potentials $\rho_{i,0}\in \sP^{ac}(\Omega)$,$V_i\in W^{1,\infty}(\Omega)$ satisfying Hypothesis \ref{hypothesis3}.
   Let $(\rho_{i,0}^\eta)_{\eta \in I}$ and $(V_i^\eta)_{\eta \in I}$, denote the approximating sequence of data constructed in Proposition \ref{approximation}.

   \medskip 
   
    Now, for each $\eta \in I$, let $(\rho_1^\eta,\rho_2^\eta)$ denote the unique classical solution of System \eqref{eq:vcdid} with initial data $(\rho_{1,0}^\eta,\rho_{2,0}^\eta)$ and potentials $V_1^\eta,V_2^\eta$.
    Thanks to Proposition \ref{compactness}, it follows that there exists $\theta_\alpha > 1$ such that, for each $i \in \{1,2\}$ and $p \in [1,+\infty)$, the families $(\rho_i^\eta)_{i \in I}$ are pre-compact for the strong topology in $L^{\theta_\alpha}([0,T];L^p(\Omega))$. 

    \medskip 
    
    Consequently, extract a sub-sequence (for which we do not re-label the index $\eta$) such that 
    $\rho_{i}^\eta$ converges strongly and point-wise almost everywhere (cf. \cite[Theorem 2.2.5.]{Bogachev}) to a limiting profile $\rho_i$ as $\eta \to 0$.
    
    \medskip 

    To address the convergence of the momentum term $\rho_i^\eta \partial_x\log(\sigma^\eta)$, recognise that such a product may be re-written in the form 
    \[
    \rho_i^\eta \partial_xf'(\sigma^\eta)  = \sqrt{\sigma^\eta} \partial_xf' (\sigma^\eta)\cdot \f{\rho_i^\eta}{\sqrt{\sigma^\eta}}.
    \]
    Now, it was established in Proposition \ref{approximation} that the term $\sqrt{\sigma^\eta} \partial_xf' (\sigma^\eta)$ is bounded in $L^2([0,T]\times\Omega)$ uniformly in $\eta$.
    Hence, up to the extraction of a further sub-sequence, the family $\sqrt{\sigma^\eta} \partial_xf' (\sigma^\eta)$ converges weakly in $L^2([0,T]\times\Omega)$ to a limiting object denoted $\zeta$ as $\eta \to 0$.

    \medskip 
    
    However, $\rho_i^\eta$ converges strongly in $L^{\theta_\alpha}_{t,x}$ to $\rho_i$, the limiting object must satisfy $\zeta = \sqrt{\rho_1+\rho_2} \partial_xf' (\rho_1+\rho_2)$

    \medskip 

    Secondly, the quotient term, 
    \[
    \f{\rho_i^\eta}{\sqrt{\sigma^\eta}}
    \]
    converges point-wise to 
    \[
    \f{\rho_i}{\sqrt{\sigma}}
    \]
    as $\eta \to 0$.
    Moreover, it follows from Egoroff's Theorem \cite[Theorem 2.2.1.]{Bogachev} and the Vitali Convergence Theorem \cite[Theorem 4.5.4.]{Bogachev} that, for any $q \in [1,+\infty)$, this quotient also converges to such a limit, strongly in $L^q([0,T]\times\Omega)$, if the sequence 
    \[
     \left|\f{\rho_i^\eta}{\sqrt{\sigma^\eta}}\right|^q
    \]
    is uniformly integrable.
    To prove a uniform integrability result, we will make use of the De La Vall\'ee-Poussin Theorem \cite[Theorem 4.5.9.]{Bogachev} in the following fashion.
    \medskip 
    
    From Lemma \ref{cfineq}, it follows that 
    \[
    \|\sigma^\eta\|_{L^2([0,T]\times\Omega)}
    \]
    is bounded uniformly in $\eta$.
    Further, recognise the inequality 
    \[
     \left|\f{\rho_i^\eta}{\sqrt{\sigma^\eta}}\right|^4 \leqslant (\sigma^\eta)^2.
    \]
    Then, by letting $[0,+\infty) \ni t\mapsto G(t) $ denote the map
    \[
    t \mapsto t^2
    \]
    it follows that
    \begin{align*}
        \int_0^T& \int_{\Omega} G\left(\left|\f{\rho_i^\eta}{\sqrt{\sigma^\eta)}}\right|^2\right) \diff x \diff t =  \int_0^T\int_{\Omega}  \left|\f{\rho_i^\eta}{\sqrt{\sigma^\eta}}\right|^4 \diff x \diff t \\
        & \leqslant  \|\sigma^\eta\|_{L^2([0,T]\times\Omega)}.
    \end{align*}
    In particular, since 
    \[
    \|\sigma^\eta\|_{L^2([0,T]\times\Omega)}
    \]
    is bounded uniformly in $\eta$, 
    the above inequality shows that the family 
    \[
    \left(\left|\f{\rho_i^\eta}{\sqrt{\sigma^\eta}}\right|^2\right)_{\eta \in I}, 
    \]
    is uniformly integrable for $\eta \in I$.
    This is a consequence of the De La Vall\'ee-Poussin Theorem \cite[Theorem 4.5.9.]{Bogachev} which uses the super-linearity of the map $G$.

    \medskip 

    As mentioned previously, together, the uniform integrability and point-wise convergence mean that, consequent to Vitali's Convergence Theorem, the family 
    \[
    \left(\f{\rho_i^\eta}{\sqrt{\sigma^\eta}}\right)_{\eta \in I}, 
    \]
    admits a sub-sequence which converges strongly to it's point-wise limit in $L^2([0,T]\times\Omega)$.

    \medskip 
    
    As the product of an $L^2_{t,x}$ strongly convergent sequence and an $L^2_{t,x}$ weakly convergent sequence, it now follows that the product 
    \[
    \rho_i^\eta \partial_xf'(\sigma^\eta)  = \sqrt{\sigma^\eta} \partial_xf' (\sigma^\eta)\cdot \f{\rho_i^\eta}{\sqrt{\sigma^\eta}}.
    \]
    must converge weakly in $L^1([0,T]\times\Omega)$, as $\eta \to 0$, to the limiting momentum  
   \[
    \rho_i \partial_xf'(\rho_1+\rho_2) 
    \]
    
    Then, passing to the limit as $\eta \to 0$ in the weak formulation of System \eqref{eq:vcdid} and using the established convergence of the cross-diffusive terms along with the convergence of the potentials established in Proposition \ref{approximation}, we deduce that the following equality holds for $i \in \{1,2\}$ and any $\varphi \in C_{c}^\infty([0,T)\times\overline{\Omega})$ 
    satisfying $\partial_x\varphi = 0$ on $(0,T)\times \partial\Omega$.

    \begin{flalign*}
       & \int_{\Omega} \rho_{i,0}\varphi_0 \diff x + \int_0^T\int_{\Omega}\rho_i\partial_t\varphi \diff x \diff t \\
        & = \lim_{\eta\to 0} \bigg(\int_{\Omega} \rho_{i,0}^\eta\varphi_0 \diff x + \int_0^T\int_{\Omega}\rho_i^\eta\partial_t\varphi \diff x \diff t\bigg) \\
         & = \lim_{\eta\to 0}
        \int_0^T\int_{\Omega} \rho_i^\eta\partial_x(f'(\rho_1^\eta+\rho_2^\eta)+V_i^\eta)\partial_x \varphi - \eta\rho_i^\eta \partial_{xx}^2\varphi \diff x \diff t\\
       & = \int_0^T\int_{\Omega} \rho_i\partial_x(f'(\rho_1+\rho_2)+V_i)\partial_x \varphi \diff x \diff t.
    \end{flalign*}
    Thanks to the above inequality, it follows that $(\rho_1,\rho_2)$ defines a weak solution of System \eqref{eq:cdid}.
    However, to show the disappearance of the viscosity term, it was required to take test functions satisfying $\partial_x\varphi = 0$ on $(0,T)\times \partial\Omega$.
    Consequently, it is not clear that the limiting System satisfies the no-flux boundary condition.

    \medskip 
    
    To recognise that the no-flux boundary condition \emph{is} weakly satisfied, recall that, thanks to the bounds established in Proposition
    \ref{gfedi} and Corollary \ref{falpreparation} there exists $p\ast> 1$ such that the momentum term $\rho_i \partial_x(f'(\rho_1+\rho_2) +V_i)$ belongs to $L^{p\ast}([0,T]\times\Omega)$.
    By letting, $q\ast$ denote the H\"older conjugate of $p\ast$ a density argument allows the set of admissible test functions to extended to $W^{q\ast}([0,T]\times\Omega)$.
    
    \medskip 
    
    Specifically, test functions $\varphi$ belonging to $W^{q\ast}([0,T]\times\Omega)$ do not have to satisfy $\partial_x\varphi = 0 $ on $(0,T)\times \partial\Omega$.
\end{proof}

\paragraph{Acknowledgements}
The second author was supported by the Engineering and Physical Sciences Research Council [Grant Number EP/W524426/1].

\begin{itemize}
    \item Conflict of Interest - The authors declare that there are no known conflicts of interest associated with this
manuscript and no financial support has influenced its outcome.
    \item Data Availability Statement - Data sharing is not applicable to this article as no
 data sets were generated or analysed.
\end{itemize}

\newpage 

\appendix
\section{Well-Posedness for the Viscous Approximation}\label{appendix:A}
\begin{proposition}\label{shorttime}
    Given initial data $(\rho_{1,0},\rho_{2,0})$, there exists $\mathcal{T} > 0$ and a pair of curves $(\rho_1,\rho_2)$ satisfying
    \[
    \rho_1,\rho_2 \in C([0,\caT);C^2(\overline{\Omega}))\cap C^1([0,\caT);C(\overline{\Omega})),
    \]
    such that $(\rho_1,\rho_2)$ defines the unique classical solution to System \eqref{eq:vcdid}.
    In addition, $\rho_1+\rho_2$ is strictly positive on $[0,\caT)\times \overline\Omega$.
\end{proposition}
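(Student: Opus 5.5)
We would prove Proposition \ref{shorttime} by recasting System \eqref{eq:vcdid} as an abstract quasilinear parabolic system and invoking the local theory of Amann \cite{Amannqlsexistence} (equivalently Acquistapace \cite{acquistapaceqps} or Ladyzhenskaya \cite{Ladyzhenskaya}). Writing $u=(\rho_1,\rho_2)$, expand the flux:
\[
\eta\partial_x\rho_i+\rho_i\partial_x f'(\sigma)+\rho_i\partial_xV_i
=\eta\partial_x\rho_i+\rho_i f''(\sigma)(\partial_x\rho_1+\partial_x\rho_2)+\rho_i\partial_xV_i .
\]
The system therefore takes the divergence form $\partial_t u=\partial_x(A(u)\partial_x u+F(x,u))$ with diffusion matrix
\[
A(u)=\eta\,\mathrm{Id}+f''(\sigma)\begin{pmatrix}\rho_1&\rho_1\\ \rho_2&\rho_2\end{pmatrix},
\]
and the no-flux conditions become the conormal conditions $A(u)\partial_x u+F=0$ on $\partial\Omega$. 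The eigenvalues of $A(u)$ are $\eta$ and $\eta+\sigma f''(\sigma)$. Both are real and at least $\eta>0$ wherever $\sigma>0$, by \ref{K4}, so the system is normally elliptic in Amann's sense on the open set $G=\{u:\ \sigma>0\}$. This is where \ref{K2} enters. The initial datum lies in $G$ uniformly, since $\sigma_0\geqslant c>0$ on $\overline\Omega$, and by \ref{K4} the coefficients are $C^{2,\beta}$ on compact subsets of $G$.

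First, we verify the Lopatinskii–Shapiro condition for the conormal boundary operator. In one dimension with an invertible, normally elliptic $A(u)$ this reduces to the fact that the Neumann-type problem for $\lambda-\partial_x(A\partial_x\cdot)$ on the half-line is uniquely solvable. Second, we apply Amann's local existence theorem in the class $W^{1,p}$ with $p>1$, or in Hölder spaces as in Ladyzhenskaya, to obtain a unique maximal solution on $[0,\caT)$. We then check that this solution stays in $G$ for short time. The solution is continuous into $C(\overline\Omega)$ and $\sigma_0$ has a positive minimum, so $\sigma>0$ on $[0,\caT)\times\overline\Omega$ after possibly shrinking $\caT$. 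Alternatively, the aggregate equation in \eqref{eq:rsigmaevolution} is scalar and uniformly parabolic with a no-flux condition, and the strong maximum principle applied to it gives a positive lower bound directly.

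Third, we upgrade the regularity to $C([0,\caT);C^2(\overline\Omega))\cap C^1([0,\caT);C(\overline\Omega))$, using Schauder theory for the linearised problem with frozen coefficients. This is where the first compatibility condition \eqref{eq:zeroorder}, contained in \ref{K3}, is used: it makes the solution smooth up to $t=0$ in the Hölder class $C^{2+\beta,1+\beta/2}$. Uniqueness of classical solutions follows from the uniqueness part of Amann's theorem, or from a standard Gronwall estimate on the difference of two solutions while both stay in $G$.

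We expect the main obstacle to be the regularity up to $t=0$ and up to $\partial\Omega$ simultaneously. Interior regularity and regularity for $t>0$ are automatic. Continuity of the second derivatives at $t=0$, however, requires matching the boundary flux of the data, hence \eqref{eq:zeroorder}. We would handle it by treating the quasilinear problem as a fixed point of the linear conormal problem in $C^{2+\beta,1+\beta/2}([0,\caT]\times\overline\Omega)$, following Ladyzhenskaya's theorem for systems. The contraction for small $\caT$ then comes from the Hölder continuity of $f''$ and $f'''$ on $[c/2,\|\sigma_0\|_\infty+1]$.
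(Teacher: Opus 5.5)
Your proposal is correct and follows essentially the same route as the paper. Both apply Amann's local theory \cite{Amannqlsexistence} to the normally elliptic quasilinear system on the half-plane $G=\{\rho_1+\rho_2>0\}$, which gives positivity of $\sigma$ from $G$-valuedness, and both obtain regularity up to $t=0$ from the zero-order compatibility condition \eqref{eq:zeroorder} (the paper cites Acquistapace \cite{acquistapaceqps} where you propose a Schauder fixed point); your explicit eigenvalue computation ($\eta$ and $\eta+\sigma f''(\sigma)$) and your one-dimensional Lopatinskii--Shapiro check supply the normal-ellipticity verification that the paper only asserts.
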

\begin{proof}
    Let $G \subset \R^2$ denote the open half-plane
    \[
    G:= \left\{(x,y)\in \R^2| x+y > 0\right\}.
    \]
    The map $(v_1,v_2) \mapsto f''(v_1+v_2)$ belongs to $C^{1,1}(G)$ due to Assumption \ref{K4} whilst $(\rho_{1,0},\rho_{2,0}) \in C(\overline{\Omega};G)$  due to Assumption \ref{K2}.
    Moreover, it may readily be verified that, for any $\eta \in (0,1)$, System \eqref{eq:vcdid} is associated with a normally elliptic boundary value problem over $G$ (cf. \cite[Pages 1-3.]{Amannqlsexistence}).
    Consequently, it follows from \cite[Theorem 1.]{Amannqlsexistence} that there exists $\caT > 0$ such that System \eqref{eq:vcdid} admits a unique classical solution $(\rho_1,\rho_2)$ belonging to the class
    \[
    C([0,\caT);C(\overline{\Omega};G))\cap C((0,\caT);C^2(\overline{\Omega};\R^2))\cap C^1((0,\caT);C(\overline{\Omega};R^2)).
    \]
    In particular, $\rho_1+ \rho_2 > 0$ on $[0,\caT)\times \overline{\Omega}$.

    \medskip 

    To establish smoothness up to time $t= 0$ it is further necessary to impose a compatibility condition between the initial and boundary data.
    For System \eqref{eq:vcdid}, this condition coincides with the zero order condition stated in Assumption \ref{K3}.
    Moreover, thanks to the regularity assumed in \ref{K1} and the compatibility condition \ref{K3}, it follows from \cite[Theorem 1.1.]{acquistapaceqps} that 
    \[
      \rho_1,\rho_2 \in C([0,\caT);C^2(\overline{\Omega}))\cap C^1([0,\caT);C(\overline{\Omega})).
    \]
\end{proof}

Here, we derive a uniform upper and lower bound for $\sigma$ which is independent of any short-time existence horizon $\caT$ for which $\caT \leqslant T$.

\medskip 

Establishing the limiting case requires the use of the Moser--Alikakos iteration type argument, used in \cite[Theorem 3.1.]{KimZhang}, which employs the use of Sobolev embedding theory to produce a bound via a convergent telescope product.
\begin{proposition}\label{pedi}
  Let $\caT, \eta > 0 $ , $\bs{v} \in C^1([0,\caT);\overline{\Omega})$ and let 
  \[
  u\in C([0,\caT);C^2(\overline{\Omega}))\cap C^1([0,\caT);C(\overline{\Omega}))
  \]
  denote a strictly positive classical solution of the non-linear Fokker--Planck equation 
  \begin{equation}\label{eq:nlfp2}
      \begin{cases}
      \begin{alignedat}{3}
        \partial_t&u && = \partial_x (u(\eta\partial_x\log(u)+ \partial_xf'(u)+ \bs{v})), &&\text{ in } (0,\caT) \times \Omega,\\ 
          &u_0 && = u &&\text{ on } \{0\}\times \overline{\Omega},\\
        &0 && = u (\eta\partial_x\log(u) + \partial_xf'(u) + \bs{v}) &&\text{ on } (0,\caT)\times \partial\Omega.
      \end{alignedat}
      \end{cases}
  \end{equation}
  Then, there exists $K_1 \coloneqq K_1(\bs{v}) > 0$, whose size depends only on $\|\bs{v}\|_{L^\infty([0,\caT)\times\Omega)}$ such that, for each $p \in \R \setminus \{0,1\}$, the following energy dissipation inequality is satisfied on $(0,\caT)$.
  \begin{equation}\label{eq:pedi}
  \begin{split}
      \frac{1}{p(p-1)}\partial_t \int_{\Omega}u^p \diff x & + \frac{1}{2}\int_{\Omega} \left(\eta+ 2 uf''(u)\right)u^{p-2}|\partial_x u|^2 \diff x \\
      & \leqslant \frac{C_1}{\eta}\int_{\Omega}u^p \diff x.
  \end{split}
\end{equation}
Furthermore, there exist $K_2,K_3,K_4,K_5 > 0$, whose sizes depend only on $K_1,|\Omega|$ and $\eta$, such that, when $|p| > K_5$, the following energy dissipation inequality is satisfied on $(0,\caT)$
  \begin{equation}\label{eq:alikakosineq}
  \partial_t\int_{\Omega}u^p  \diff x+ K_2\int_{\Omega}u^p\diff x\leqslant K_3|p|^{K_4}\left(\int_{\Omega}u^\f{p}{2}\right)^2.
  \end{equation}
\begin{proof}
        Since $u$ is assumed to be strictly positive, both $u$ and its reciprocal belong to $C^1((0,\caT);C(\overline{\Omega}))$. 
        Consequently, the Leibniz rule facilitates the interchange of integration and differentiation for the following energy dissipation equality which is satisfied for $t \in (0,\caT)$ and $p \in \R \setminus\{0,1\}$.
    \begin{equation}\label{eq:pede1}
            \frac{1}{p(p-1)}\partial_t\int_{\Omega} u^p \diff x =  \frac{1}{p-1} \int_{\Omega} u^{p-1}\partial_tu \diff x.
    \end{equation}
    Substitute the value of $\partial_tu$ on the right hand-side of Equality \eqref{eq:pede1} using Equation \eqref{eq:nlfp2} and perform integration by parts on the resulting integral. 
    The resultant boundary term vanishes due to the no-flux boundary condition and hence the following equality is satisfied.
    \begin{equation}\label{eq:pede}
    \begin{split}
            \frac{1}{p(p-1)}\partial_t&\int_{\Omega} u^p \diff x\\
          =  -\frac{1}{p-1} &\int_{\Omega} \partial_x u^{p-1}( u(\eta\partial_x\log(u) + \partial_xf'(u) + \bs{v})) \diff x\\
          = - & \int_{\Omega} (\eta+ uf''(u)) |\partial_x u|^2u^{p-2} + u^{p-2} \partial_x u \cdot u \bs{v}\diff x.    
    \end{split}
    \end{equation}
    The first summand in the third line of Equality \eqref{eq:pede}, which comes from the diffusion, has a negative sign and this helps in controlling the energy dissipation.
    On the other hand, the sign of the product $\partial_xu \cdot \bs{v}$ is not clear.
    However, since $\eta > 0$, the gradient appearing in this product may be controlled using the diffusive contribution. 
    In particular, applying Young's $L^2$ inequality to the drift term, the inequality
    \begin{equation}\label{eq:youngdrift}
    |\partial_x u \cdot u \bs{v}|  - \f{\eta}{2}|\partial_x u|^2 \leqslant  \f{1}{2\eta} |u \bs{v}|^2  
     \leqslant \frac{1}{2\eta}\|\bs{v}\|^2_{L^\infty([0,\caT]\times\Omega)} |u|^2.
    \end{equation}
    is satisfied.

    \medskip 
    
    Moreover, by letting 
    \[
    K_1 \coloneqq \frac{1}{2}\|\bs{v}\|^2_{L^\infty([0,\caT]\times\Omega)}
    \]
    and using Inequality \eqref{eq:youngdrift} as an upper bound for the drift term featured in the right hand-side of the energy dissipation equality \eqref{eq:pede}, Inequality \eqref{eq:pedi} is derived.
    
    \medskip 
    To establish Inequality \eqref{eq:alikakosineq}, first recognise that when $|p| > 2$
    the following inequality is satisfied. 
    \[
    p(p-1)u^{p-2}|\partial_x u|^2 =  4\left(1-\f{1}{p}\right)|\partial_x u^\f{p}{2}|^2 \geqslant 2|\partial_x u^\f{p}{2}|^2
    \]
    By using the above inequality as a lower bound for the squared derivative term in the first line of Inequality \eqref{eq:pedi} and multiplying both sides of the inequality by $p(p-1)$, the following inequality is derived for $|p| > 2$.
        \begin{equation}\label{eq:pedi2}
      \partial_t \int_{\Omega}u^p \diff x + {\eta}\int_{\Omega} \left|\partial_x u^\f{p}{2}\right|^2 \diff x \leqslant \frac{K_1}{\eta}p(p-1)\int_{\Omega}u^p \diff x.
\end{equation}
    Secondly, the Gagliardo--Nirenberg Inequality is used (cf.\cite[Theorem:, Page 125]{Nirenbergineq}).
    In particular, it follows from the Gagliardo-Nirenberg Inequality that there exists $c_1\coloneqq c_1(\Omega)$, whose value depends only on $|\Omega|$, such that the the following inequality for each $p \in \R\setminus \{0\}$.

    \begin{equation}\label{eq:GNineq}
       \left\|u ^\frac{p}{2}\right\|_{L^2(\Omega)} \leqslant c_1 \left(     \left\|\partial_x u ^\frac{p}{2}\right\|_{L^2(\Omega)}^\f{1}{3}     \left\| u ^\frac{p}{2}\right\|_{L^1(\Omega)}^\f{2}{3} + \left\|u ^\frac{p}{2}\right\|_{L^1(\Omega)}\right)
    \end{equation}
    After applying the $L^3 - L^\frac{3}{2}$ Young inequality to the product on the right hand-side of Inequality \eqref{eq:GNineq} and then applying Jensen's inequality, the following inequality is derived for each $\delta > 0$.
    \begin{equation}\label{eq:youngGNineq1}
    \begin{split}
       & \left\|u ^\frac{p}{2}\right\|_{L^2(\Omega)}^2 \\
       & \leqslant c_1^2\left( \frac{\sqrt{\delta}}{\sqrt{p(p-1)}}\left\|\partial_x u ^\frac{p}{2}\right\|_{L^2(\Omega)} +(1+ (\delta^{-1}p(p-1))^\f{1}{4}) \left\| u ^\frac{p}{2}\right\|_{L^1(\Omega)} \right)^2
       \\
       & \leqslant 2c_1^2\left( \frac{\delta}{p(p-1)}\left\|\partial_x u ^\frac{p}{2}\right\|_{L^2(\Omega)}^2 +(1+( \delta^{-1}p(p-1))^\f{1}{4})^2 \left\| u ^\frac{p}{2}\right\|_{L^1(\Omega)}^2 \right).
    \end{split}
    \end{equation}
    Similarly, by applying Young's inequality and then Jensen's Inequality \eqref{eq:GNineq}, without incorporating the factors of $\delta $ and $p(p-1)$, it follows that
        \begin{equation}\label{eq:youngGNineq2}
    \begin{split}
       \left\|u ^\frac{p}{2}\right\|_{L^2(\Omega)}^2 - 8c_1^2\left\| u ^\frac{p}{2}\right\|_{L^1(\Omega)}^2 \leqslant 2c_1^2 \left\|\partial_x u ^\frac{p}{2}\right\|_{L^2(\Omega)}^2.
    \end{split}
    \end{equation}
    On the one hand, Inequality \eqref{eq:youngGNineq1} may be used as an upper bound for the right hand-side of Inequality \eqref{eq:pedi}.
    On the other, Inequality \eqref{eq:youngGNineq2} may be used as a lower bound for the left hand-side.

    \medskip 

    In addition to the Inequalities \eqref{eq:youngGNineq1} and \eqref{eq:youngGNineq2}, there exists $P_1 > 0$, whose size depends only on $\delta$, such that the following inequalities holds for all $|p| > P_1$.
    \begin{equation}\label{eq:p-1top}
         (1+(\delta^{-1}p(p-1))^\frac{1}{4})^2\leqslant \frac{|p|^2}{\sqrt{\delta}} \text{ and } p(p-1) \leqslant 2p^2.
    \end{equation}
    Assuming that $|p| > P_1$, use Inequality \eqref{eq:youngGNineq1} to bound the right hand-side of Inequality \eqref{eq:pedi2} and then use the Inequalities \eqref{eq:p-1top} to bound the factor which multiplies the quantity 
    \[
    \|u^\frac{p}{2}\|_{L^1(\Omega)}^2.
    \]
    From the application of such inequalities it follows that 
    \begin{equation}\label{eq:deltac}
    \begin{split}
   \partial_t & \int_{\Omega}u^p \diff x + {\eta}\int_{\Omega} \left|\partial_x u^\f{p}{2}\right|^2 \diff x \\
    & \leqslant \frac{K_1}{\eta}p(p-1)\int_{\Omega}u^p \diff x\\
     & \leqslant 2\frac{K_1}{\eta}c_1^2\delta \int_{\Omega} \left|\partial_x u^\f{p}{2}\right|^2 \diff x + 4\frac{K_1}{\eta\delta^2}c_1^2p^4 \left(\int_{\Omega} u^\frac{p}{2} \diff x\right)^2.
    \end{split}
      \end{equation}
      Moreover, by choosing $\delta$ such that 
      \[
      2\frac{K_1}{\eta}c_1^2\delta < \frac{\eta}{2}, \text{ denoting }
      K_2 = 4 \frac{K_1}{\eta\delta^2}c_1^2,
      \]
        and rearranging Inequality \eqref{eq:deltac}, it follows that
    \begin{equation}\label{eq:alikakosupper}
    \partial_t \int_{\Omega}u^p \diff x + \f{\eta}{2}\int_{\Omega} \left|\partial_x u^\f{p}{2}\right|^2 \diff x  \leqslant K_2|p|^4\left(\int_{\Omega} u^\frac{p}{2} \diff x\right)^2.
      \end{equation}
      Now, Inequality \eqref{eq:youngGNineq2} may be used as a lower bound for the squared gradient term Inequality \eqref{eq:alikakosupper} from which it follows that
    \begin{align*}
    \partial_t \int_{\Omega}u^p \diff x + \f{\eta}{4c_1^2}\int_{\Omega}u^\f{p}{2} \diff x  \leqslant \left(K_2|p|^4 + 2\eta\right)\left(\int_{\Omega} u^\frac{p}{2} \diff x\right)^2.
    \end{align*}
    Now, let 
    \[
    K_3 = \frac{\eta}{4c_1^2}
    \]
    and recognise that for any $K_4 > 5$ there exists $P_2 > 0 $ such that, for all $\eta \in [0,1]$, the following inequality holds for all $|p| > P_2$
    \[
    K_3|p|^{K_4}> K_3|p|^4 + 4\eta.
    \]
    So far, it has only been necessary to assume that $|p| > \max\{2, P_1,P_2\}$ where, in particular, $P_1$ has depended only on $\delta$ (whose size, subsequently, depended only on $\eta, K_1, c_1$) along with the constants $\eta, K_1,c_1$. 

    \medskip 

    Consequently, there exists a constant $K_5$, depending only on $\eta, K_1, c_1$ such that the following inequality holds for all $|p| > K_5$.
    \begin{equation}\label{eq:alikakosrestated}
     \partial_t\int_{\Omega}u^p  \diff x+ K_2\int_{\Omega}u^p\diff x\leqslant K_3|p|^{K_4}\left(\int_{\Omega}u^\f{p}{2}\right)^2.
    \end{equation}
    Indeed, since the choice of each constant $K_2,\dots,K_5$ was dependent only on $\eta,K_1,c_1$, and since $c_1$ depends only on $|\Omega|$,
    Inequality \eqref{eq:alikakosrestated} concludes the claim.
\end{proof}
\end{proposition}
\begin{corollary}\label{alikikosbound}
Let $(\sigma,r)$ denote a classical solution of System \eqref{eq:rsigmaevolution}.
Then, the density of $\sigma$ is uniformly strictly positive and uniformly bounded from above on $[0,\caT)\times \overline{\Omega}$. 
\end{corollary}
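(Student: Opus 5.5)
The plan is to run a Moser--Alikakos iteration on the aggregate $\sigma$, which solves the non-linear Fokker--Planck problem \eqref{eq:nlfp2} with $u=\sigma$ and $\bs{v}=\bs{v}_\sigma$ (Proposition \ref{transform}). Since $\|\bs{v}_\sigma\|_{L^\infty}\leqslant \sum_i\|\partial_xV_i\|_{L^\infty(\Omega)}$, independently of $\caT$, the constants $K_1,\dots,K_5$ of Proposition \ref{pedi} are independent of $\caT$. By Proposition \ref{shorttime}, $\sigma$ is strictly positive and smooth on $[0,\caT)\times\overline{\Omega}$, so Inequality \eqref{eq:alikakosineq} holds for every $|p|>K_5$. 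The upper bound is obtained with $p=2^k\to+\infty$ and the lower bound with $p=-2^k\to-\infty$, that is, as an upper bound on $\sigma^{-1}$.

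\textbf{Step 1 (one level of the iteration).} Fix $p_k=\pm 2^k p_0$ with $|p_0|>K_5$, and set
\[
M_k\coloneqq \max\Bigl\{\sup_{t<\caT}\int_\Omega \sigma(t)^{p_k}\diff x,\ \int_\Omega\sigma_0^{p_k}\diff x,\ 1\Bigr\}.
\]
Since $\bigl(\int_\Omega\sigma^{p_k/2}\bigr)^2\leqslant M_{k-1}^2$, Inequality \eqref{eq:alikakosineq} gives
\[
\partial_t y+K_2y\leqslant K_3|p_k|^{K_4}M_{k-1}^2,\qquad y(t)=\int_\Omega\sigma(t)^{p_k}\diff x .
\]
A comparison (Gr\"onwall) argument then yields
\[
y(t)\leqslant y(0)+\frac{K_3}{K_2}|p_k|^{K_4}M_{k-1}^2 .
\]
The initial data are bounded above and strictly below on $\overline{\Omega}$ by \ref{K1}--\ref{K2}, so $\int_\Omega\sigma_0^{p_k}\leqslant |\Omega|\,\|\sigma_0^{\pm1}\|_{L^\infty}^{|p_k|}$. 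This gives the recursion
\[
M_k\leqslant C\,2^{kK_4}\max\bigl\{M_{k-1}^2,\ A^{2^k}\bigr\},
\]
with $A$ depending only on $\|\sigma_0^{\pm1}\|_{L^\infty}$ and $|p_0|$.

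\textbf{Step 2 (telescoping).} Take $1/p_k$-th powers, or equivalently set $N_k\coloneqq M_k^{1/2^k}$. The recursion becomes
\[
N_k\leqslant (C2^{kK_4})^{2^{-k}}\max\{N_{k-1},A\}.
\]
Because $\sum_k 2^{-k}\log(C2^{kK_4})<\infty$, the product of the prefactors converges. Hence $\sup_k N_k$ is bounded in terms of $N_0$, $A$, $C$ and $K_4$. Letting $k\to\infty$ bounds $\|\sigma(t)^{\pm1}\|_{L^\infty(\Omega)}^{|p_0|}$ uniformly for $t<\caT$, which gives the upper bound and the uniform strict positivity of $\sigma$.

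\textbf{Step 3 (base case, the main obstacle).} It remains to bound $N_0$, i.e. $\sup_t\int_\Omega\sigma^{p_0}$ for one $|p_0|>K_5$, with a constant independent of $\caT$. I plan to use \eqref{eq:pedi} directly. Dropping the gradient term, it reads
\[
\frac{1}{p(p-1)}\partial_t\int_\Omega\sigma^p\diff x\leqslant \frac{K_1}{\eta}\int_\Omega \sigma^p\diff x .
\]
For $|p|>1$ we have $p(p-1)>0$, so Gr\"onwall on $[0,\caT)\subset[0,T]$ gives
\[
\int_\Omega\sigma(t)^{p_0}\diff x\leqslant e^{p_0(p_0-1)K_1T/\eta}\int_\Omega\sigma_0^{p_0}\diff x ,
\]
which is finite and independent of $\caT$. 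The same works for negative $p_0$. The care needed is only in checking that the Gr\"onwall estimate uses $T$ rather than $\caT$, and that all the $K_i$ are genuinely $\caT$-independent; the dependence on $\eta$ is harmless here.
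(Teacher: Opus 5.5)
Your proposal is correct and follows the paper's proof. You apply Proposition \ref{pedi} to $u=\sigma$ with $\bs{v}=\bs{v}_\sigma$, iterate Inequality \eqref{eq:alikakosineq} along $p=\pm 2^k$ (scaled by $p_0$), and telescope. The only difference is that you write out the recursion, the convergent product, and the $\caT$-independent base case (Gr\"onwall on \eqref{eq:pedi}), which the paper leaves to the Moser--Alikakos lemma of \cite[Lemma 3.2]{KimZhang}.
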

\begin{proof}
The variable $\sigma$ defines a solution of System \eqref{eq:nlfp2} with $\bs{v}= \bs{v}_\sigma$.
In particular, the drift satisfies 
\[
\|\bs{v}_\sigma\|_{L^\infty([0,\caT)\times \Omega)} \leqslant \sum_{i=1}^2 \|\partial_x V_i\|_{L^\infty(\Omega)}
\]
and so the theory of Proposition \ref{pedi} applies for $ u = \sigma$ where the constant $c_v$ featured in the statement of Proposition \ref{pedi} may be taken to depend only on the Lipschitz constant of each potential.

\medskip 

Recall from Proposition \ref{pedi} that Inequalities \eqref{eq:pedi} and \eqref{eq:alikakosineq} are satisfied.
The existence of strict upper and lower bounds then follows from considering Inequality \eqref{eq:alikakosineq} satisfied for the sequences 
\[
p_{k,1} = 2^k, \ p_{k,2} = -(2^k)
\]
and then considering the limit as $k \to \infty$ of 
the telescopic Moser--Alikakos type inequality presented in \cite[Lemma 3.2]{KimZhang}.
\end{proof}
Having established a uniform upper bound and strict positivity of the density $\sigma$, it follows that a short time solution of System \eqref{eq:vcdid} can not blow-up by means of $\sigma$ exiting the domain on which $f$ is smooth.
Subsequently, to extend the existence result to an arbitrary time horizon, it follows from \cite[Theorem 1.]{Amannqlsexistence} that it is sufficient to show that each density is bounded in the space $L^\infty([0,T);H^1(\Omega))$, independently of $\caT$.
As such, we recall the spaces.
\[
\mathcal{V}^k(Q_T)\coloneqq L^\infty([0,T];H^{k-1}(\Omega))\cap L^2([0,T];H^k(\Omega)), k \in \N.
\]
\begin{proposition}\label{sobolev}
Let $(\sigma,r)$ denote a classical short-time solution of System \eqref{eq:rsigmaevolution}.
Then, there exists $K_6$ such that 
\[
\|r\|_{\mathcal{V}_2(Q_\caT)},\|\sigma\|_{\mathcal{V}_2(Q_\caT)} < K_6 \text{ for } i \in \{1,2\}.
\]
In particular, or any short time horizon satisfying $\caT \leqslant T$, the value of $K_6$ may be made independent of $\caT$. 
\end{proposition}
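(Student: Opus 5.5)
We prove Proposition \ref{sobolev} by standard $H^1$ energy estimates for the pair $(\sigma,r)$ on $[0,\caT)$. The key input is Corollary \ref{alikikosbound}, which says that $\sigma$ is bounded above and below by positive constants. These bounds depend only on $\eta$, $|\Omega|$, $\sigma_0$ and the Lipschitz constants of the potentials, and not on $\caT\leqslant T$.

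Consequently, on the range of $\sigma$ the coefficients $f''(\sigma)$, $f'''(\sigma)$, $\log$-derivatives and so on are bounded. Moreover, $\eta+\sigma f''(\sigma)\geqslant \eta>0$, so the $\sigma$-equation in System \eqref{eq:rsigmaevolution} is uniformly parabolic in divergence form. The $r$-equation is linear and uniformly parabolic with viscosity $\eta$. The maximum principle gives $0\leqslant r\leqslant 1$, since $r(1-r)$ vanishes at $0$ and $1$ and the boundary condition is Neumann.

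\textbf{Step 1: the $\sigma$ estimates.}
\begin{enumerate}
\item Test the $\sigma$-equation with $\sigma$. Using the no-flux condition and Young's inequality on the drift term, with $\bs{v}_\sigma\in L^\infty$, we obtain $\sigma\in L^\infty L^2\cap L^2H^1$ with a Gr\"onwall constant $e^{CT}$.
\item Write the equation as $\partial_t\sigma=\partial_x(A(\sigma)\partial_x\sigma+\sigma\bs{v}_\sigma)$, where $A(\sigma)=\eta+\sigma f''(\sigma)$.
\item Test with $\partial_t\Phi(\sigma)$, where $\Phi'=A$; equivalently, multiply by $-\partial_x(A(\sigma)\partial_x\sigma)$. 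Integrating by parts, the boundary terms vanish because the flux vanishes on $\partial\Omega$. This yields
\[
\partial_t\int_\Omega \tfrac12|A(\sigma)\partial_x\sigma|^2\diff x+\int_\Omega A(\sigma)|\partial_x(A(\sigma)\partial_x\sigma)|^2\diff x\leqslant C\int_\Omega |\partial_x(\sigma\bs{v}_\sigma)|^2\diff x.
\]
\item The term $\partial_x(\sigma\bs{v}_\sigma)$ contains $\partial_x r\,\partial_x(V_1-V_2)$. It is therefore bounded by $C(|\partial_x\sigma|^2+|\partial_x r|^2+1)$, using $V_i\in C^{3,1}$.
\end{enumerate}

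\textbf{Step 2: the $r$ estimates.}
\begin{enumerate}
\item Test the $r$-equation with $-\partial_{xx}r$; the boundary terms drop because $\partial_x r=0$ on $\partial\Omega$.
\item The transport coefficient $2\eta\partial_x\log\sigma+\partial_xf'(\sigma)+\bs{v}_r$ is $\lesssim 1+|\partial_x\sigma|$. Young's inequality then gives
\[
\partial_t\tfrac12\|\partial_x r\|_2^2+\tfrac{\eta}{2}\|\partial_{xx}r\|_2^2\leqslant C\int_\Omega(1+|\partial_x\sigma|^2)|\partial_x r|^2\diff x+C\int_\Omega |\bs{w}_r|^2\diff x.
\]
\item Here $|\bs{w}_r|\lesssim 1+|\partial_x\sigma|$.
\end{enumerate}

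\textbf{Main obstacle: the cubic coupling.} The critical term is $\int_\Omega|\partial_x\sigma|^2|\partial_x r|^2$. We control it by the one-dimensional Gagliardo--Nirenberg (Agmon) inequality,
\[
\|\partial_x r\|_\infty^2\lesssim \|\partial_x r\|_2\|\partial_{xx}r\|_2+\|\partial_x r\|_2^2,
\]
which gives
\[
\int_\Omega|\partial_x\sigma|^2|\partial_x r|^2\diff x\leqslant \|\partial_x\sigma\|_2^2\,\|\partial_x r\|_\infty^2\leqslant \tfrac{\eta}{4}\|\partial_{xx}r\|_2^2+C\bigl(\|\partial_x\sigma\|_2^4+\|\partial_x\sigma\|_2^2\bigr)\|\partial_x r\|_2^2.
\]
The $\|\partial_{xx}r\|_2^2$ part is absorbed into the dissipation on the left. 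The remaining Gr\"onwall coefficient $\|\partial_x\sigma(t)\|_2^4$ is integrable in time only if $\sigma\in L^\infty H^1$, which Step 1 has not yet provided. So the two $H^1$ energies must be closed jointly rather than sequentially.

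\textbf{Step 3: the joint estimate.} Set
\[
E(t)=\|A(\sigma)\partial_x\sigma\|_2^2+\|\partial_x r\|_2^2.
\]
Combining Steps 1 and 2 gives the differential inequality
\[
E'+D\leqslant C\,g(t)\,(1+E),\qquad g(t)=1+\|\partial_x\sigma(t)\|_2^2 .
\]
Here $\int_0^{\caT} g\diff t$ is already bounded by the $L^2H^1$ estimate of Step 1, item 1. To reach this form, the factor $\|\partial_x\sigma\|_2^4$ is split as $\|\partial_x\sigma\|_2^2\cdot\|\partial_x\sigma\|_2^2\lesssim g\cdot E$, and the resulting product $E^2$-type term is treated as $g\,E\cdot E$. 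If that split does not close cleanly, the fallback is to use the $\sigma$-dissipation from Step 1, item 3, which controls $\|\partial_{xx}\sigma\|_2$, to bound $\|\partial_x\sigma\|_\infty$ instead. In that case the split becomes
\[
\|\partial_x\sigma\|_\infty^2\|\partial_x r\|_2^2,
\]
with Agmon applied to $\sigma$ and the second derivative absorbed, leaving the Gr\"onwall coefficient $g(t)=1+\|\partial_x\sigma(t)\|_2^2\in L^1(0,\caT)$.

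Gr\"onwall's lemma then gives $E\in L^\infty(0,\caT)$ and $D\in L^1(0,\caT)$, with constants depending on $T$ but not on $\caT$. Combined with the $L^2$ bounds and $0\leqslant r\leqslant1$, this yields the $\mathcal{V}^2(Q_\caT)$ bounds for $\sigma$ and $r$, and hence for $\rho_i=r\sigma,(1-r)\sigma$.
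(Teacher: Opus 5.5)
\textbf{The gap is in Step 3: the joint estimate does not close.} Look at the residual terms your two splits leave.
\begin{itemize}
\item In the primary split, the residual is $\|\partial_x\sigma\|_2^4\|\partial_x r\|_2^2$.
\item In the fallback, Agmon on $\sigma$ followed by Young's inequality to absorb $\|\partial_{xx}\sigma\|_2^2$ leaves $\|\partial_x\sigma\|_2^2\|\partial_x r\|_2^4$. It does not leave $g(t)\|\partial_x r\|_2^2$, as you claim.
\end{itemize}
In both cases $\|\partial_x\sigma\|_2^2$ and $\|\partial_x r\|_2^2$ are each $\lesssim E$, so all you obtain is $E'\leqslant C\,g\,(1+E)^2$. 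This is a Riccati-type inequality. It bounds $E(t)$ only while $C\int_0^t g\diff s<(1+E(0))^{-1}$. So it gives no bound on $[0,\caT)$ that is uniform in $\caT\leqslant T$, and that uniformity is exactly what the proposition must supply for the continuation argument in Theorem \ref{classicalft2}.

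\textbf{The missing observation.} $\partial_x r$ enters the $\sigma$-estimate only as an additive forcing, and that forcing is already controlled at the $L^2$ level. So the two $H^1$ energies should be closed sequentially, with $\sigma$ first, which is the opposite of your remark. Here is how the repair goes.
\begin{itemize}
\item Test the $r$-equation with $r$. Use $0\leqslant r\leqslant 1$, $|h|+|\bs{w}_r|\lesssim 1+|\partial_x\sigma|$ (where $h=2\eta\partial_x\log(\sigma)+\partial_xf'(\sigma)+\bs{v}_r$), and your Step 1(1) bound. This gives $\int_0^{\caT}\|\partial_x r\|_2^2\diff t\leqslant C(T)$.
\item The right-hand side of your Step 1(3) inequality is now a forcing term in $L^1(0,\caT)$ that does not involve $E$. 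Integrating directly gives $\partial_x\Phi(\sigma)\in L^\infty L^2$ and $\partial_{xx}\Phi(\sigma)\in L^2L^2$.
\item For Step 1(3), test with $\partial_t\Phi(\sigma)$; multiplying by $-\partial_x(A\partial_x\sigma)$ is not quite equivalent. The boundary term $\partial_x\Phi(\sigma)\,\partial_t\Phi(\sigma)$ vanishes because \ref{J5} forces $\bs{v}_\sigma=0$ on $\partial\Omega$, not merely because the flux vanishes.
\item Agmon's inequality together with $A\geqslant\eta$ then gives $\|\partial_x\sigma\|_\infty^2\in L^1(0,\caT)$.
\item In Step 2, bound the cubic term by $\|\partial_x\sigma\|_\infty^2\|\partial_x r\|_2^2$. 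This yields a linear Gr\"onwall inequality with an integrable coefficient.
\end{itemize}

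\textbf{Comparison with the paper.} The paper uses the same sequential order. It first obtains $\partial_x\sigma\in L^p_{t,x}$ for all $p<\infty$ from $L^p$-theory for the $\sigma$-equation, in which $r$ appears only through the bounded drift $\bs{v}_\sigma$. It then gets the $\mathcal{V}^1$ bound for $\partial_x r$ from Ladyzhenskaya's linear theory, and only afterwards the $\mathcal{V}^1$ bound for $\partial_x\sigma$. Once repaired, your argument replaces the cited $L^p$ parabolic theory with elementary energy estimates.
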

\begin{proof}
Since $\sigma_0$ is assumed to be uniformly strictly positive, the aggregate density $\sigma$ is uniformly bounded and strictly positive on $[0,\caT)\times \Omega$. This is as a consequence of Corollary \ref{alikikosbound} and, consequently, $f$ is $C^{4,\beta}$ on the image of $\sigma$.     
Due to the aforementioned upper and lower bounds, the equation governing $\sigma$ also defines a uniformly parabolic non-linear Fokker--Planck equation with the drift $\bs{v}_\sigma \in L^\infty([0,\caT)\times\Omega)$.

\medskip 

Equipped with no-flux boundary conditions, \cite[Theorem 1.6]{Lpparabolictheory} shows that $\sigma \in L^p([0,\caT];W^{1,p}(\Omega))$ for every $p \in [1,+\infty)$ and in particular, this bound can be made independent of $\caT$ up to time $T$.

\medskip 

By formally differentiating the equation governing $r$ in System \eqref{eq:rsigmaevolution} with respect to $x$, it follows that the derivative of the ratio, denoted $\partial_x r \eqqcolon \tilde{r}$, weakly satisfies the following parabolic equation.
    \begin{equation}\label{eq:dxrevolution}
    \begin{cases}
    \begin{alignedat}{2}
          \partial_x \tilde{r} & = \eta\partial_{xx}\tilde{r} + \partial_x(h^\eta\tilde{r} + \bs{w}_r r(1-r)) &&\text{ in }
    (0,\caT)\times \Omega, \\
    \tilde{r} & = 0 &&\text{ on } (0,\caT)\times\partial\Omega,
    \end{alignedat}
    \end{cases}
    \end{equation}
    where the coefficient $h^\eta \co [0,\caT)\times \overline{\Omega}\to \R $ is defined
    \begin{align*}
    h^\eta & \coloneqq
    \partial_xf'(\sigma)+2\eta \partial_x\log(\sigma) + \bs{v}_r.
    \end{align*}
    However, since $\partial_x \sigma \in L^p([0,\caT]\times \Omega)$ and since $\sigma$ is bounded uniformly from above and below, it follows that $h^\eta, \bs{w}_rr(1-r)  \in L^p([0,\caT]\times \Omega)$.
    Consequently, it follows from \cite[Chapter 3, Theorem 4.2.]{Ladyzhenskaya} that $\tilde{r}$ must coincide with the unique energy solution of Equation \eqref{eq:dxrevolution} which is bounded in $\mathcal{V}^1(Q_\caT)$.
    Thus $r$ is bounded  $\mathcal{V}^2(Q_\caT)$ where, in particular, thanks to Inequality \cite[Chapter 3, Lemma 2.1.]{Ladyzhenskaya},
    the norm $\|r\|_{\mathcal{V}^2(Q_\caT)}$ may be taken to depend only on $T$, the initial data and the regularity of $h^\eta, \bs{w}_r \in L^p([0,\mathcal{T}]\times \Omega)$. 

    \medskip 

    Second order estimates are derived for $\sigma$ as follows.
    Let $\tilde{\sigma} = \partial_x\sigma$. 
    Then, $\tilde{\sigma}$ weakly satisfies the linearised parabolic problem
    \begin{equation}\label{eq:dxsigmaevolution}
            \begin{cases}
            \begin{alignedat}{2}
          \partial_t\tilde{\sigma} & = \partial_{x}((\eta + a_\sigma)\partial_x\tilde{\sigma} + b_\sigma\tilde{\sigma}) + \partial_x f^\eta_{\sigma} &&\text{ in } (0,\caT)\times \Omega,\\
          \tilde{\sigma} & = g_{\sigma} &&\text{ on } (0,\caT) \times \partial\Omega.
        \end{alignedat}
    \end{cases}
    \end{equation}
    where the coefficients $a_\sigma, b_\sigma, f_\sigma\co [0,\caT)\times \Omega$ and $g_\sigma\co [0,\caT)\times \partial\Omega$ are defined
    \begin{align*}
        a_\sigma & \coloneqq \sigma f''(\sigma), \quad \quad b_\sigma \coloneqq \partial_x (\sigma f''(\sigma)) + \bs{v}_\sigma, \\
        f_\sigma & \coloneqq \sigma \partial_x\bs{v}_\sigma, \quad \quad \quad g_\sigma \coloneqq -\f{\sigma \bs{v}_\sigma}{\eta+ \sigma f''(\sigma)}. 
    \end{align*}
    The coefficient $\eta + a_\sigma$ is strictly positive and uniformly bounded from above and so Problem \eqref{eq:dxsigmaevolution} is uniformly parabolic. 
    Moreover, $g_\sigma \in L^\infty((0,\caT)\times \partial\Omega)$ whilst $b_\sigma \in L^p([0,\caT)\times \Omega)$ for every $p \in [1,+\infty)$ thanks to the afore-established regularity of $\sigma$. 
    In addition, $f_\sigma \in L^\infty([0,\caT];L^2(\Omega))$ since it was shown that $r$ is bounded in $\mathcal{V}^2(Q_\caT)$.

    \medskip 

    It follows from \cite[Chapter 3, Theorem 4.2.]{Ladyzhenskaya} that Problem \eqref{eq:dxsigmaevolution} admits a unique weak solution in the class $\mathcal{V}^1(Q_\caT)$.
    However, since $\sigma$ was the unique classical solution of System \eqref{eq:rsigmaevolution}, the weak solution of System \eqref{eq:dxsigmaevolution} must necessarily coincide with $\tilde{\sigma}$.
    Thus, we conclude that $\sigma$ is bounded in $\mathcal{V}^2(Q_\caT)$.
\end{proof} 
\begin{thm}\label{classicalft2}
    For any $T > 0$ and initial data $(\rho_{1,0},\rho_{2,0})$, there exists a unique classical solution of System \eqref{eq:vcdid}. 
    In particular, this solution possesses the regularity 
    \[
    C([0,T];C^3(\overline{\Omega})) \cap C^1([0,T];C^1(\overline{\Omega})).
    \]
    \end{thm}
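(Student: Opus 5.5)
The argument is a continuation argument: the short-time classical solution of Proposition \ref{shorttime} is extended to the whole interval $[0,T]$ using a priori bounds that do not depend on the existence horizon $\caT$, and the higher regularity $C([0,T];C^3(\overline{\Omega}))\cap C^1([0,T];C^1(\overline{\Omega}))$ is then obtained by bootstrapping.

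\textbf{Extension to $[0,T]$.} Let $\caT^\ast\leqslant T$ denote the supremum of the horizons on which a classical solution exists. Corollary \ref{alikikosbound} shows that $\sigma$ stays uniformly bounded above and uniformly positive on $[0,\caT^\ast)\times\overline{\Omega}$. These bounds depend only on the data, on $\eta$, and on the Lipschitz constants of $V_1,V_2$, not on the horizon. The solution therefore stays inside a compact subset of the half-plane $G$, where $f''$ is $C^{2,\beta}$. Proposition \ref{sobolev} then gives $r,\sigma\in\mathcal{V}^2(Q_{\caT})$ uniformly in $\caT\leqslant T$. In one space dimension this implies $\rho_i=r\sigma$ and $(1-r)\sigma$ are bounded in $L^\infty([0,\caT);H^1(\Omega))$ independently of $\caT$. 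By the global continuation criterion \cite[Theorem 1.]{Amannqlsexistence}, an $L^\infty_tH^1_x$ bound that keeps the solution in a compact subset of $G$ rules out blow-up before $T$. Hence $\caT^\ast=T$. Uniqueness follows from the uniqueness part of Proposition \ref{shorttime}, applied on overlapping intervals.

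\textbf{Higher regularity.} I first work in the variables $(\sigma,r)$ of Proposition \ref{transform}, which have uniformly parabolic, bounded-coefficient equations. The steps are:
\begin{enumerate}
\item Apply the $L^p$ maximal regularity and Schauder theory for the conormal problem, as in \cite{Lpparabolictheory} and \cite{Ladyzhenskaya}, to get $\sigma,r\in C^{1+\beta/2,2+\beta}$ up to $t=0$. This needs the data in $C^{2,\beta}$ together with the zeroth-order compatibility condition \eqref{eq:zeroorder}.
\item Differentiate the system once in time. The pair $(\partial_t\rho_1,\partial_t\rho_2)$ then solves a linear parabolic system whose coefficients are Hölder continuous by the previous step. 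Its initial data are $\mathfrak{D}_i(\rho_{1,0},\rho_{2,0})$, which lie in $C^{1,1}$ because of \ref{K1}. Its boundary condition is the differentiated no-flux condition, and \eqref{eq:firstorder} is exactly the compatibility condition this linear problem requires.
\item Linear Schauder theory then gives $\partial_t\rho_i\in C([0,T];C^1(\overline{\Omega}))$.
\item Rewrite the equation as $\eta\partial_{xx}\rho_i+\dots=\partial_t\rho_i$, an elliptic equation at each fixed time. Elliptic regularity then gives $\partial_{xxx}\rho_i\in C([0,T];C(\overline{\Omega}))$. This step uses $f\in C^{4,\beta}$ (from \ref{K4}) and $V_i\in C^{3,1}$.
\end{enumerate}

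\textbf{Main obstacle.} The hard part is the time-differentiated step: verifying that \eqref{eq:firstorder} is precisely the first-order compatibility condition, so that regularity holds up to $t=0$ and not only for $t>0$. Both the operator $\mathfrak{D}_i$ and the nonlinear boundary operator have to be linearised carefully. I would handle this through the abstract quasilinear framework of \cite{acquistapaceqps}, applied to the differentiated problem.
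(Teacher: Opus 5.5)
Your proposal is correct and follows the paper's route. The global step is the same: the horizon-independent bounds of Corollary \ref{alikikosbound} and Proposition \ref{sobolev} feed into Amann's continuation criterion. The regularity step is also the same kind of bootstrap, using the first-order compatibility condition \eqref{eq:firstorder} and $f\in C^{4,\beta}$; your time-differentiation and fixed-time elliptic scheme just makes explicit the paper's one-line appeal to \cite{Ladyzhenskaya}.

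One small slip: $\mathfrak{D}_i(\rho_{1,0},\rho_{2,0})$ contains $f'''(\sigma_0)$, so it is only $C^{1,\beta}$ rather than $C^{1,1}$. That is still enough for the $C([0,T];C^1(\overline{\Omega}))$ conclusion of your third step.
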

\begin{proof}
Recall from Corollary \ref{alikikosbound} that $\sigma$ is uniformly bounded and strictly positive.
Further, recall from Proposition \ref{sobolev} that $r, \sigma$ are bounded in 
\[
L^\infty([0,T];H^1(\Omega))
\]
independently of $\caT$ for any short-time existence horizon $\caT \leqslant T$.

\medskip 

Thanks to the embedding $H^1(\Omega) \hookrightarrow L^\infty(\Omega)$, the products $r\sigma, (1-r)\sigma$ must similarly be bounded in $L^\infty([0,T];H^1(\Omega))$.
Moreover, thanks to the aforementioned estimates, there exists $S$, a subset of $H^1(\Omega;\R^2)\cap C(\overline{\Omega};G)$ bounded away from its exterior, such that $(\rho_{1,t},\rho_{2,t}) \in S$ for all $ t \in (0,\caT)$ with $\caT \leqslant T$.
It then follows from \cite[Theorem 1.1.]{Amannqlsexistence} that the unique classical solution of System \eqref{eq:vcdid} exists up to time $t= T$.

\medskip 

By means of the classical regularity theory \cite[Theorem 5.3.]{Ladyzhenskaya} and equipped with a globally defined unique classical solution $\rho_1,\rho_2$, it is possible to employ a boot-strapping argument to establish regularity up to $C([0,T];C^3(\overline{\Omega})) \cap C^1([0,T];C^1(\overline{\Omega}))$.
In particular, the derivation of such regularity is where the imposition of the first order compatibility condition assumed in Assumption \ref{K3} and the fourth order H\"older regularity Assumption \ref{K4} are finally imposed.
\end{proof}

\section{Supplementary Results}
\begin{proposition}\label{ratiopropn}
    Let $f,g \in \sP^{ac}(\Omega)$.
    Then, there exists $r \in L^\infty(\Omega)$, which may not be unique, such that 
    \[
    r(f+g) = f
    \]
    almost everywhere on $\Omega$.
\end{proposition}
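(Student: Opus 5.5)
We construct $r$ explicitly from the densities. Fix representatives of the densities of $f$ and $g$ with respect to Lebesgue measure; both are nonnegative measurable functions on $\Omega$, so $h \coloneqq f+g$ is a nonnegative measurable function. We split $\Omega$ into the measurable sets
\[
A \coloneqq \{x \in \Omega \ | \ h(x) > 0\}, \qquad A^c \coloneqq \{x \in \Omega \ | \ h(x) = 0\}.
\]
On $A$ we set $r(x) \coloneqq f(x)/h(x)$. On $A^c$ we set $r(x) \coloneqq c(x)$, where $c\co A^c \to [0,1]$ is any measurable function; for definiteness one may take $c \equiv 0$.

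The key steps, in order, are as follows.

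First, measurability. $r$ is a quotient of measurable functions on the measurable set $A$ and is measurable on $A^c$ by choice, so $r$ is measurable on $\Omega$.

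Second, boundedness. Since $0 \leqslant f \leqslant f+g = h$ pointwise, we have $0 \leqslant r \leqslant 1$ on $A$. Hence $r$ takes values in $[0,1]$ and $r \in L^\infty(\Omega)$.

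Third, the identity. On $A$ we have $r h = f$ directly. On $A^c$, $h = 0$ together with $f,g \geqslant 0$ forces $f = 0$, so $r h = 0 = f$ there as well. Therefore $r(f+g) = f$ pointwise, and in particular almost everywhere.

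Finally, if $A^c$ has positive Lebesgue measure, any two choices of $c$ that differ on a non-null subset of $A^c$ give distinct elements of $L^\infty(\Omega)$ that both satisfy the identity. This is exactly the non-uniqueness flagged in Remark \ref{ratioremark}, which is why Assumption \ref{J2} is phrased in terms of the existence of a $BV$ representative.

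There is no real obstacle here. The only point requiring care is that the statement concerns equivalence classes: changing the representatives of $f$ and $g$ on a null set changes $r$ only on a null set, so the a.e. identity is well posed.
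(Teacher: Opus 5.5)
Your proof is correct, but it takes a different and more direct route than the paper.

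The paper does not define $r$ pointwise. It approximates $f,g$ in $L^1(\Omega)$ by smooth, uniformly positive $f_\e,g_\e$ and sets $r_\e \coloneqq f_\e/(f_\e+g_\e)\in[0,1]$. It then extracts a weak-$\ast$ limit $r$ in $L^\infty(\Omega)$. Finally it identifies $r(f+g)=f$ by passing to the limit in $\int_\Omega r_\e(f_\e+g_\e)\varphi\diff x=\int_\Omega f_\e\varphi\diff x$. That step uses the fact that a strongly $L^1$-convergent factor times a weak-$\ast$ $L^\infty$-convergent factor converges when tested against bounded functions.

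Your construction avoids compactness, subsequence extraction and this strong--weak product limit entirely. It also gives more information, because it makes the structure of the solution set explicit. Since $f+g>0$ on $A$, any solution is forced to equal $f/(f+g)$ a.e. on $A$, and you show it is completely free on the zero set $\{f+g=0\}$. This is precisely the non-uniqueness discussed in Remark~\ref{ratioremark}, and it makes Assumption~\ref{J2} transparent. The assumption simply asks that the free part on $\{f+g=0\}$ can be chosen so that the total variation is finite.

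The paper's limit procedure yields a single representative that it never identifies; which one you get depends on the approximating sequence. It is in the spirit of the mollification used later in Proposition~\ref{logapproximation}, but for the statement as posed it buys nothing extra.

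The only bookkeeping your version needs is to fix finite, nonnegative representatives of the densities, which you effectively do.
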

\begin{proof}
    Let $(f_\e)_{\e> 0}, (g_\e)_{\e> 0}$ denote smooth uniformly positive approximations of $f,g$ converging strongly in $L^1(\Omega)$ to $f,g$ as $\e \to 0$.
    Then, the ratio 
    \[
    r_\e \coloneqq \frac{f_\e}{f_\e + g_\e}
    \]
    is smooth and well-defined everywhere on $\Omega$.
    Moreover, $r_\e(x) \in [0,1]$ for every $x \in \Omega$ and hence, $r_\e$ converges to a limiting density $r$ in the $L^\infty(\Omega)$ weak-$\ast$ topology.
    Moreover, thanks to the strong convergence in $L^1(\Omega)$ of $f_\e$ and $g_\e$ the following equality must be satisfied for any test function $\varphi \in C(\Omega)$.
    \begin{align*}
        \int_{\Omega} r(f+g)\varphi = \lim_{\e \to 0} \int_{\Omega} r_\e(f_\e+g_\e)\varphi =  \lim_{\e \to 0} \int_{\Omega} f_\e \varphi =\int_{\Omega} f \varphi.
    \end{align*}
    In particular, since the test function $\varphi $ was arbitrary, 
    the equality 
    \[
    r(f+g) = f
    \]
    must hold almost everywhere on $\Omega$.
\end{proof}
\begin{proposition}\label{energypropn}
Let $g \in C^1([0,T];C(\overline{\Omega}))$ then the function $G\co [0,T]\to \R$ given by
\[
G(t) \coloneqq \int_{\Omega}|g_t| \diff x
\]
is differentiable for almost every $t \in [0,T]$ and $G'(t)$ satisfies the equality 
\begin{equation}\label{eq:gprime}
G'(t) = \int_{\Omega} \sgn(g_t)\partial_tg_t\diff x
\end{equation}
for almost every $t \in [0,T]$.
\end{proposition}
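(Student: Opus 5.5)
The plan is to prove differentiability of $G$ almost everywhere by showing that $G$ is Lipschitz, and then to identify its derivative. The identification will use the pointwise differentiability of $t\mapsto |g_t(x)|$ off the zero set of $g_t$, together with dominated convergence. By assumption $\partial_t g\in C([0,T];C(\overline{\Omega}))$, so
\[
M\coloneqq \sup_{[0,T]\times\overline{\Omega}}|\partial_t g|<+\infty .
\]
For $s,t\in[0,T]$ the reverse triangle inequality gives
\[
|G(t)-G(s)|\leqslant \int_{\Omega}|g_t-g_s|\diff x\leqslant M|\Omega||t-s|,
\]
so $G$ is Lipschitz. By Rademacher's theorem (the one-dimensional Lebesgue differentiation theorem), $G$ is differentiable at almost every $t\in[0,T]$.

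Next, fix $t\in(0,T)$ and write $h\neq0$ small. The difference quotient is
\[
\frac{G(t+h)-G(t)}{h}=\int_{\Omega}\frac{|g_{t+h}(x)|-|g_t(x)|}{h}\diff x .
\]
The integrand is bounded by $M$, uniformly in $h$ and $x$. On the set $\{x: g_t(x)\neq0\}$, continuity gives a nonzero sign of $g_{t+h}(x)$ equal to $\sgn(g_t(x))$ for $|h|$ small. There the integrand converges to $\sgn(g_t(x))\partial_t g_t(x)$.

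On the set $Z_t\coloneqq\{x: g_t(x)=0\}$ the integrand equals $|g_{t+h}(x)|/h$. This converges to $|\partial_t g_t(x)|$ as $h\downarrow0$ and to $-|\partial_t g_t(x)|$ as $h\uparrow0$. So this set is the \emph{only obstacle}: without control of it, the right and left derivatives of $G$ could differ by $2\int_{Z_t}|\partial_t g_t|\diff x$.

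To handle $Z_t$, I would use Fubini on the compact set $Z\coloneqq\{(t,x): g(t,x)=0\}$. For fixed $x$, the function $t\mapsto g(t,x)$ is $C^1$. By the standard one-dimensional fact that a $C^1$ (indeed any absolutely continuous) function has vanishing derivative almost everywhere on its zero set, the set $\{t: g_t(x)=0,\ \partial_t g_t(x)\neq0\}$ is Lebesgue-null for each $x$; in fact it consists of isolated points, hence is countable. Fubini then gives that
\[
\{(t,x): g_t(x)=0,\ \partial_t g_t(x)\neq0\}
\]
has zero $(t,x)$-measure. Consequently, for almost every $t$ we have $\partial_t g_t=0$ almost everywhere on $Z_t$.

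For such $t$, both one-sided pointwise limits of the integrand agree almost everywhere in $x$ with $\sgn(g_t)\partial_t g_t$, using any convention for $\sgn(0)$, since the product vanishes there anyway. Dominated convergence with dominating constant $M$ then yields that $G$ is differentiable at $t$ and that \eqref{eq:gprime} holds. Intersecting the two full-measure sets of times concludes the proof.
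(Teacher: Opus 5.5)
Your proof is correct and takes the same route as the paper. Both arguments use Lipschitz continuity of $G$ for a.e.\ differentiability, apply Fubini to a $(t,x)$-null set so that, for a.e.\ $t$, the pointwise chain rule $\partial_t|g_t| = \sgn(g_t)\partial_t g_t$ holds for a.e.\ $x$, and finish with dominated convergence on the difference quotients (bounded by $M$). Your isolated-zeros argument on $\{g=0,\ \partial_t g\neq 0\}$ justifies the pointwise a.e.\ chain rule that the paper only asserts; your final step already gives a.e.\ differentiability by itself, so the Rademacher step is redundant.
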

\begin{proof}
  Since $g\in C^1([0,T];C(\overline{\Omega}))$ it follows that $t\mapsto g(t,x)$ is uniformly Lipschitz continuous in $t$ for $x\in \Omega$.
  Consequently, $G$ is Lipschitz continuous and hence differentiable almost everywhere on $[0,T]$.
  It is left to establish Equality \eqref{eq:gprime}.

  \medskip 

  The map $t\mapsto |g(t,x)|$ is almost everywhere differentiable,  satisfying the formula 
  \begin{equation}\label{eq:weakchain}
      \partial_t|g(t,x)| = \sgn(g(t,x))\partial_t g(t,x)
  \end{equation}
  almost everywhere on $[0,T]\times \Omega$.
  Consequently, for almost every $t$, the set of $x$ for which Equality \eqref{eq:weakchain} does not hold, must be of zero measure in $\Omega$.
  This means that, for almost every fixed $t \in [0,T]$ the Equality \eqref{eq:weakchain} holds.

  \medskip
  Subsequently, in considering the limit of the difference quotient
  \[
  \lim_{h\to0}\frac{G(t+h)-G(t)}{h}
  \]
  for almost every $t \in [0,T]$, the integral may be exchanged with the limit via the dominated convergence theorem.
  Then, in taking the limit under the integral sign, the subsequent derivative may be identified via Equality \eqref{eq:weakchain} which establishes the claim.
\end{proof}

\printbibliography

\end{document}